\documentclass[a4paper,11pt,reqno,oneside]{amsart}

\usepackage{amsmath,amssymb,amsthm,mathtools}
\usepackage{mathrsfs}
\usepackage{enumitem}
\usepackage{xcolor}
\usepackage[scale=0.75,hcentering]{geometry}

\usepackage{microtype}
\usepackage{aliascnt}
\usepackage{hyperref}
\usepackage[capitalize,nameinlink,noabbrev]{cleveref}

\makeatletter
\@ifundefined{subjclassname@2020}{%
\@namedef{subjclassname@2020}{%
\textup{2020} Mathematics Subject Classification}}{}
\makeatother

\definecolor{cobalt}{rgb}{0.0,0.28,0.67}
\hypersetup{
bookmarksopen=true,
pdftitle={Upper Bounds on the Comon Rank Gap of Real Symmetric Third-Order Tensors},
pdfauthor={Jianze Li},
pdfsubject={Real tensor rank and real symmetric tensor rank of cubic tensors},
pdfkeywords={symmetric tensor, CP rank, Waring rank, symmetric rank,
real cubic, tensor rank gap, polarization},
colorlinks=true,
linkcolor=cobalt,
citecolor=cobalt,
urlcolor=cobalt
}

\theoremstyle{plain}

\newtheorem{theorem}{Theorem}[section]

\newaliascnt{corollary}{theorem}
\newtheorem{corollary}[corollary]{Corollary}
\aliascntresetthe{corollary}

\newaliascnt{lemma}{theorem}
\newtheorem{lemma}[lemma]{Lemma}
\aliascntresetthe{lemma}

\newaliascnt{proposition}{theorem}
\newtheorem{proposition}[proposition]{Proposition}
\aliascntresetthe{proposition}

\theoremstyle{definition}

\newaliascnt{definition}{theorem}

\aliascntresetthe{definition}

\newaliascnt{problem}{theorem}
\newtheorem{problem}[problem]{Open Problem}
\aliascntresetthe{problem}

\theoremstyle{remark}

\newaliascnt{remark}{theorem}
\newtheorem{remark}[remark]{Remark}
\aliascntresetthe{remark}

\newaliascnt{example}{theorem}

\aliascntresetthe{example}

\crefname{theorem}{Theorem}{Theorems}
\Crefname{theorem}{Theorem}{Theorems}

\crefname{corollary}{Corollary}{Corollaries}
\Crefname{corollary}{Corollary}{Corollaries}

\crefname{lemma}{Lemma}{Lemmas}
\Crefname{lemma}{Lemma}{Lemmas}

\crefname{proposition}{Proposition}{Propositions}
\Crefname{proposition}{Proposition}{Propositions}

\crefname{definition}{Definition}{Definitions}
\Crefname{definition}{Definition}{Definitions}

\crefname{problem}{Open Problem}{Open Problems}
\Crefname{problem}{Open Problem}{Open Problems}

\crefname{remark}{Remark}{Remarks}
\Crefname{remark}{Remark}{Remarks}

\crefname{example}{Example}{Examples}
\Crefname{example}{Example}{Examples}

\newcommand{\tens}[1]{\boldsymbol{#1}}

\newcommand{\matr}[1]{\mathbf{#1}}

\newcommand{\vect}[1]{\boldsymbol{#1}}

\newcommand{\set}[1]{\mathcal{#1}}

\newcommand{\eqdef}{\stackrel{\sf def}{=}}
\newcommand{\RR}{\mathbb{R}}
\newcommand{\Sym}{\mathbf{Sym}}
\newcommand{\sym}{\operatorname{sym}}
\newcommand{\Span}{\operatorname{span}}
\newcommand{\rank}{\operatorname{rank}}
\newcommand{\Ess}{\operatorname{Ess}}
\newcommand{\Mthree}{M_{3,\RR}}
\newcommand{\Vthree}{V_{3,\RR}}
\newcommand{\gthree}{g_3}
\newcommand{\DeltaT}{\Delta}
\newcommand{\Ce}{C_{ec}}
\newcommand{\Cd}{C_{dc}}
\newcommand{\Cdsharp}{C_{dc}^{\sharp}}

\allowdisplaybreaks

\title[Upper Bounds on the Comon Rank Gap]
{Upper Bounds on the Comon Rank Gap of Real Symmetric Third-Order Tensors}

\author{Jianze Li}
\address{School of Science, Sun Yat-sen University, Shenzhen, Guangdong, China}
\email{lijianze@mail.sysu.edu.cn}

\date{\today} 
\subjclass[2020]{15A69, 14N07, 15A03, 11E76, 14P05}
\keywords{symmetric tensor, CP rank, symmetric rank, Comon rank gap, Waring rank, essential dimension,
real cubic}

\begin{document}

\begin{abstract}
Let $\tens{T}$ be a nonzero real symmetric third-order tensor.  Denote by
$r(\tens{T})$ its CP rank, by $s(\tens{T})$ its symmetric rank, by
$d(\tens{T})$ its essential dimension, and put
$e(\tens{T})=r(\tens{T})-d(\tens{T})$.  We establish two complementary
upper bounds for the Comon rank gap
\[
\DeltaT(\tens{T})\eqdef s(\tens{T})-r(\tens{T})
\leq\min\bigl\{
\Ce\bigl(e(\tens{T})\bigr),\ \Cd\bigl(d(\tens{T})\bigr)\bigr\}.
\]
The excess-controlled function $\Ce(e)$ is zero for $0\leq e\leq3$
and equals $9e-3$ for $e\geq4$. 
In particular, the equality
$\Ce\bigl(e(\tens{T})\bigr)=0$ for $0\leq e\leq3$ extends Friedland's
flattening-rank criterion for real symmetric third-order tensors by two
additional rank layers. 
The dimension-controlled function $\Cd(d)$ is zero for $d=0$,
equals $(d^2-4d+10)/3$ for $1\leq d\leq18$, and equals
$d^2/4+d/4+3/2$ for $d\geq19$. 
The excess estimate follows from the alternating equations and
decomposable-bivector support geometry; a low-wedge descent gives the
vanishing range $e\leq3$.  The dimension estimate combines a transverse
polar-completion bound with a shifted estimate for the maximum real
Waring rank.  Thus the two bounds measure different features of the
same tensor and remain effective in complementary regimes.
\end{abstract}

\maketitle

\section{Introduction}
\label{sec:introduction}

% The \emph{CP rank} of a tensor is the least length of a decomposition into simple
% tensors, as introduced in  \cite{hitchcock1927expression,kruskal1977three}.  
The \emph{CP rank} of a tensor is the least length of a decomposition
into simple tensors, going back to Hitchcock
\cite{hitchcock1927expression}; see also Kruskal's foundational work
on rank and uniqueness \cite{kruskal1977three}.
For a
symmetric tensor one may impose the additional requirement that every
simple summand be symmetric.  The resulting symmetric rank is the \emph{Waring
rank} of the corresponding homogeneous polynomial.  These notions are
fundamental in multilinear algebra and algebraic geometry; see, for
example, \cite{comon2008symmetric,kolda2009tensor,landsberg2012tensors,comon2014tensors}.
Beyond their intrinsic algebraic interest, CP rank and symmetric rank also play important roles in \emph{signal processing, statistics}, and \emph{machine
learning}.  
For example, the uniqueness of tensor rank decompositions
provides criteria for identifiability in \emph{independent component analysis} (ICA) \cite{kruskal1977three,comon1994independent,Como10:book},
while symmetric tensor decompositions are used for source estimation in the ICA model \cite{comon1994independent,cardoso1996jacobi,Lathauwer01:ICA,LUC2017globally,ULC2019}.

Let $\set{U}$ be a real vector space, and denote by $\Sym^3(\set{U})$ the space of third-order symmetric tensors \cite{comon2008symmetric}. For $\tens{T}\in\Sym^3(\set{U})$, let $r(\tens{T})\eqdef\rank(\tens{T})$ and
$s(\tens{T})\eqdef\rank_{\mathrm S}(\tens{T})$ denote its CP rank and symmetric rank, respectively. We define the \emph{Comon rank gap} of $\tens{T}$ by
$$\DeltaT(\tens{T})\eqdef s(\tens{T})-r(\tens{T}).$$ 
Because every symmetric rank-one tensor is rank one, one always has
$\DeltaT(\tens{T})\geq0$.  Let $\Ess(\tens{T})$ be the smallest subspace $\set{W}\subseteq \set{U}$
such that $\tens{T}\in\Sym^3(\set{W})$, and set
$d(\tens{T})\eqdef\dim\Ess(\tens{T}).$ 
The first flattening has rank $d(\tens{T})$, so $r(\tens{T})\geq d(\tens{T})$.  
We define the \emph{rank excess} of $\tens{T}$ by
$e(\tens{T})\eqdef r(\tens{T})-d(\tens{T})$, 
which measures how far its CP rank exceeds the flattening lower bound.

A central problem in this area is \emph{Comon's conjecture}
\cite{comon2004canonical,comon2008symmetric,comon2014tensors}, which
asks whether
$r(\tens{T})=s(\tens{T})$ 
holds for every real or complex symmetric tensor $\tens{T}$. A proposed complex
counterexample \cite{shitov2018counterexample} was subsequently shown to have an
error \cite{draisma2024erratum}; new counterexamples over the complex numbers
were recently reported in \cite{shitovfurther,Lovitz26:arxiv}.
% A more recent preprint claims counterexamples over every
% infinite field of characteristic different from two and three, including the
% order-three case \cite{shitovhigher}. We distinguish this preprint claim from
% the peer-reviewed status following \cite{draisma2024erratum}. 
Over the reals, counterexamples are known in orders four and six
\cite{shitov2020comon,wang2023lower}, while order-three counterexamples are 
reported in \cite{shitovhigher,Lovitz26:arxiv}.
% Shitov constructed an order-four real
% example, and Wang and Seigal developed flattening-based lower bounds and an
% order-six example \cite{shitov2020comon,wang2023lower}; the more recent
% field-uniform claim is given in \cite{shitovhigher}. These works establish
% that the gap need not vanish, but they do not supply an upper bound controlled
% by the essential dimension or by the rank excess. 
These works mainly concern constructions and lower bounds, rather than upper
bounds in terms of the essential dimension or the rank excess. The conjecture
has nevertheless been established under several additional hypotheses. Over
$\mathbb{C}$, the foundational work \cite{comon2008symmetric} proves equality in
basic low-symmetric-rank cases and under suitable decomposition conditions.
Zhang, Huang, and Qi prove equality when the CP rank does not exceed the tensor
order \cite{zhang2016comon}, while Friedland's flattening criterion covers
$r(\tens{T})\leq d(\tens{T})+1$ \cite{friedland2016remarks}. Other positive
results over $\mathbb{C}$ include tensors of sufficiently low symmetric rank
\cite{ballico2011lowrank}, tensors on tangential varieties of Veronese varieties
\cite{ballico2013tensor}, and the cubic-surface case; Seigal moreover proves
equality for complex cubic tensors of symmetric rank at most seven
\cite{seigal2020ranks}.
% For structured tensors, Nie and Ye obtain
% the equality of CP, symmetric, and Vandermonde ranks for generic Hankel tensors
% of order three (and of even order) \cite{nie2019hankel}. 
For structured tensors, Nie and Ye obtain, over $\mathbb{C}$, the equality of
CP, symmetric, and Vandermonde ranks for generic Hankel tensors of order three
(and of even order) \cite{nie2019hankel}. 
Over $\mathbb{C}$, Ballico et al.\ establish cactus-rank
lower bounds and derive sufficient conditions for equality
of tensor rank and symmetric rank 
\cite{ballico2018bounds}. Related rank-equality results include the equality of
ordinary and symmetric nuclear norms for every symmetric tensor over both
$\RR$ and $\mathbb{C}$ \cite{friedland2018nuclear}, and the equality of
$G$-stable and symmetric $G$-stable ranks over perfect fields
\cite{jiang2024gstable}. The border-rank analogue, although governed by related
secant-variety questions, is a distinct problem; known positive cases and
recent advances appear in
\cite{landsberg2010ranks,buczynski2013determinantal,mandziuk2024symmetrization}.
Other variants of Comon's conjecture include the partially symmetric setting
\cite{gesmundo2019partially}, formulations over arbitrary fields
\cite{zheng2020arbitrary,huang2022rank}, the orthogonal version \cite{LUC2018},
and the best rank one approximation problem
\cite{friedland2013best,zhang2012best}. We refer to
\cite{casarotti2020comon} for a broader survey.
% We refer to \cite{casarotti2020comon} for a broader survey of the
% problem and its developments.

% The real Waring problem has features absent over an algebraically closed
% field, including several typical ranks and rank boundaries.  Useful
% background and explicit cubic calculations may be found in
% \cite{banchi2015ternary,carlini2016reducible,
% bernardi2018typical,michalek2017ternary,michalek2018boundaries}; general
% upper and lower bounds are developed in
% \cite{landsberg2010ranks,jelisiejew2014upper,
% blekherman2015maximum,blekherman2016realrank,wang2023lower}.

Note that Comon's conjecture is equivalent to the assertion that
the Comon rank gap $\DeltaT(\tens{T})=0$ for every symmetric tensor $\tens{T}$, 
and real counterexamples
\cite{shitov2020comon,wang2023lower,shitovhigher}
show that $\DeltaT(\tens{T})$ can be nonzero over the reals. 
% For the real third-order tensors considered here, Comon's conjecture
% would assert that the Comon rank gap
% $\DeltaT(\tens{T})=0$ for every
% $\tens{T}\in\Sym^3(\set{U})$. 
This naturally leads to the following quantitative question for real symmetric tensors:
\begin{equation}
\textit{How large can $\DeltaT(\tens{T})$ be in terms of
$d(\tens{T})$ and $e(\tens{T})$?}
\label{eq-question-comon}
\end{equation}
This question quantifies the extent to which Comon's conjecture can
fail and seeks to identify the structural parameters governing such
failure. In particular, a bound in terms of $d(\tens{T})$ and $e(\tens{T})$ determines
whether the Comon rank gap remains uniformly controlled within natural
structural classes of symmetric tensors. 
It is also closely related to the computation and estimation of the
CP rank and symmetric rank of symmetric tensors
\cite{kolda2009tensor,brachat2010symmetric,
bernardi2011computing,hillar2013most}. Maximum Waring-rank estimates provide
general, if non-adaptive, bounds for the gap. The Alexander--Hirschowitz theorem
determines the generic complex Waring rank
\cite{hirschowitz1995polynomial,brambilla2008alexander}, and the maximum-rank
theorem of Blekherman and Teitler bounds the maximum real Waring rank by twice
the generic complex rank \cite{blekherman2015maximum}. Exact or sharper
information in small cubic formats is available for binary and ternary forms
\cite{reznick2013length,banchi2015ternary}. Together with
$r(\tens{T})\leq s(\tens{T})\leq\Mthree(d(\tens{T}))$,
where $\Mthree(d(\tens{T}))$ denotes the maximum real Waring rank
in $d(\tens{T})$ variables, as defined in \eqref{eq:m_3_R},
these results give the standard bound
$\DeltaT(\tens{T})\leq\Mthree(d(\tens{T}))-r(\tens{T})$; 
direct polarization instead gives $\DeltaT(\tens{T})\leq3r(\tens{T})$. %Neither estimate reflects the finer parameter $e(\tens{T})$, which is the point of departure for the results below.
These estimates also serve as the starting point for our analysis in this paper.

In this paper, we address the quantitative question
\eqref{eq-question-comon} for real symmetric third-order tensors.
All tensors to which our results apply, and all decompositions defining
their ranks, are over $\RR$.  
Our two
main bounds answer \eqref{eq-question-comon} from complementary
directions. 
The first bound, \cref{thm:rank-excess-main}, is controlled solely by
the rank excess $e(\tens{T})$.  It proves
$\DeltaT(\tens{T})=0$ for $e(\tens{T})\leq3$ and
$\DeltaT(\tens{T})\leq9e(\tens{T})-3$ for $e(\tens{T})\geq4$.
After displayed cubes are deleted, the alternating equations of
symmetry and a wedge-support count give the linear estimate.  For
$e\leq3$, low-wedge partialization and saturated descent give
$s(\tens{T})=r(\tens{T})$. 
The second bound, \cref{thm:essential-dimension-main}, is controlled
solely by the essential dimension $d(\tens{T})$.  Its first ingredient
is the rank-sensitive polar estimate
\[
\DeltaT(\tens{T})
\leq\Mthree(d(\tens{T})-1)-d(\tens{T})+1.
\]
A quadratic polar is completed by cubes transverse to its defining
hyperplane, while a minimum CP decomposition supplies a hyperplane
containing sufficiently many factor columns.  Its second ingredient is
the shifted maximum-rank estimate
$\Mthree(d)\leq2\gthree(d-1)+1$, obtained from a transverse polar
degeneration and a regular local completion.  Together they give the
piecewise quadratic function $\Cd$.  Directional avoidance and related
derivative-reduction arguments occur in
\cite{BialynickiBirulaSchinzel2008,BialynickiBirulaSchinzel2011corrigendum,jelisiejew2014upper}; the present
form is adapted to the Comon gap.  The generic-rank input is the
Alexander--Hirschowitz theorem
\cite{hirschowitz1995polynomial,brambilla2008alexander}.

The paper is organized accordingly.  \Cref{sec:preliminaries} fixes
notation and recalls the rank estimates used later, and
\cref{sec:main-results} states the two main theorems together with
their supporting propositions.  \Cref{sec:proof-excess-general}
proves the general linear excess bound,
\cref{sec:proof-small-excess} proves the zero-gap result for excess at
most three, and \cref{sec:proof-dimension} proves the two
dimension-controlled propositions.  \Cref{sec:discussion} summarizes
the consequences and records the remaining absolute-gap problem.

\section{Preliminaries}
\label{sec:preliminaries}

\subsection{Ranks and essential variables}
All vector spaces are finite-dimensional.  Tensors are denoted by bold letters such as $\tens{T}$ and $\tens{Q}$,
matrices and linear maps by $\matr{A}$ and $\matr{M}$, vectors and
bivectors by $\vect{a}$ and $\vect{p}$, and sets and vector spaces by
calligraphic letters.
We use $\tens{T}$ for the tensor under consideration and $\tens{R}$
for a residual; auxiliary tensors are named locally.  The functions
$\Ce$, $\Cd$, and $\Cdsharp$ denote gap bounds, whereas $\Mthree$
denotes a maximum rank and $\Vthree$ its upper bound.
Let $\set{U}_1$, $\set{U}_2$ and $\set{U}_3$ be real vector spaces.
For $\tens{T}\in
\set{U}_1\otimes\set{U}_2\otimes\set{U}_3$, its \emph{CP rank} \cite{hitchcock1927expression,kruskal1977three,kolda2009tensor,comon2014tensors,comon2020topology} is defined as 
\[
r(\tens{T})=\rank(\tens{T})\eqdef\min\left\{q:\tens{T}=\sum_{i=1}^q
\lambda_i\vect{a}_i\otimes\vect{b}_i\otimes\vect{c}_i,\
\vect{a}_i\in\set{U}_1,
\vect{b}_i\in\set{U}_2,
\vect{c}_i\in\set{U}_3, \lambda_i\in\RR\right\}.
\]
Let $\set{U}$ be a real vector space, and denote by 
\[
\Sym^3(\set{U})
\eqdef
\left\{
\tens{T}\in\set{U}^{\otimes 3}:
\sigma\cdot\tens{T}=\tens{T}
\text{ for every }\sigma\in\mathfrak{S}_3
\right\}
\] 
the space of third-order symmetric tensors over $\set{U}$,
where $\mathfrak{S}_3$ acts by permuting the three tensor factors.
For $\tens{T}\in\Sym^3(\set{U})$, its \emph{symmetric rank} \cite{kolda2009tensor,comon2008symmetric} is defined as 
\[
s(\tens{T})=\rank_{\mathrm S}(\tens{T})
\eqdef\min\left\{q:\tens{T}=\sum_{i=1}^q
\lambda_i\vect{u}_i^{\otimes3},\ 
\vect{u}_i\in\set{U}, \lambda_i\in\RR\right\}.
\] 
For a symmetric tensor $\tens{T}\in\Sym^3(\set{U})$, its
\emph{essential space}, denoted by $\Ess(\tens{T})$, is the smallest
linear subspace $\set{W}\subseteq\set{U}$ such that
$\tens{T}\in\Sym^3(\set{W})$. 
We call $\tens{T}$ \emph{concise} in $\set{U}$ if
$\Ess(\tens{T})=\set{U}$. 
For a quadratic tensor $\tens{Q}\in\Sym^2(\set{U})$,
$\Ess(\tens{Q})$ is defined analogously; its dimension equals
the matrix rank of $\tens{Q}$.

Write
\(
  \matr{F}_{\tens{T}}:\set{U}^*\to\set{U}^{\otimes2},\
  \alpha\mapsto\alpha\mathbin{\lrcorner}\tens{T}.
\)
This contraction map will clarify the support and rank arguments below.
We record the following elementary result. 

\begin{lemma}[Contraction and essential support]
\label{lem:contraction-essential-support}
For $\tens{T}\in\Sym^3(\set{U})$, we have
\[
 \Sym^3(\set{U})=\Span\{\vect{u}^{\otimes3}:\vect{u}\in\set{U}\},
 \ \ 
 \Ess(\tens{T})=(\ker\matr{F}_{\tens{T}})^\perp,
 \quad
 d(\tens{T})=\rank\matr{F}_{\tens{T}}\le r(\tens{T})\le s(\tens{T}).
\]
\end{lemma}

% \begin{proof}
% % LEAN: mem_span_iff_permutationSymmetric; essentialSpace_isLeast/finrank;
% % actual_flatteningDim_le_cpRank; actual_cpRank_le_symmetricRank.
% The polarization identity
% \[
% \begin{aligned}
% 6\sym(\vect{u}\otimes\vect{v}\otimes\vect{w})
% ={}(\vect{u}+\vect{v}+\vect{w})^3
%  -(\vect{u}+\vect{v})^3-(\vect{u}+\vect{w})^3-(\vect{v}+\vect{w})^3
%  +\vect{u}^3+\vect{v}^3+\vect{w}^3
% \end{aligned}
% \]
% gives the spanning assertion.  For $\set{W}\subseteq\set{U}$,
% $\tens{T}\in\Sym^3(\set{W})$ implies
% $\set{W}^\perp\subseteq\ker\matr{F}_{\tens{T}}$.
% Conversely, suppose this inclusion holds and let $\matr{\Pi}:\set{U}\to\set{W}$
% be a projection, viewed as an endomorphism of $\set{U}$.  For every $\alpha$,
% $\alpha\circ\matr{\Pi}-\alpha\in\set{W}^\perp$, so we obtain
% \[
%  \matr{F}_{(\matr{\Pi}\otimes\operatorname{id}^{\otimes2})\tens{T}}(\alpha)
%  =\matr{F}_{\tens{T}}(\alpha\circ\matr{\Pi})
%  =\matr{F}_{\tens{T}}(\alpha).
% \]
% The tensor-to-flattening map is injective, as is seen by evaluating on a
% basis of $\set{U}^*$.  Thus projection in the first mode fixes $\tens{T}$;
% symmetry gives the same assertion in the other modes.  Hence
% $\matr{\Pi}^{\otimes3}\tens{T}=\tens{T}\in\Sym^3(\set{W})$.
% Taking annihilators proves the formula for $\Ess(\tens{T})$.
% Rank--nullity gives $d(\tens{T})=\rank\matr{F}_{\tens{T}}$.
% A CP decomposition of length $r$ gives a sum of $r$ linear maps of rank
% at most one, and a cube is a CP term, proving the remaining inequalities.
% \end{proof}

We recall the following standard ambient-space invariance property of
CP rank and symmetric rank; see, for example,
\cite{carlini2006reducing} for the corresponding reduction to essential
variables. Since the result follows directly by applying a linear
projection from the ambient space onto the smaller subspace, we omit its proof.

\begin{lemma}[Ambient-space invariance]
\label{lem:concise-reduction}
Let $\set{W}\subseteq\set{U}$ be real vector spaces. If
$\tens{T}\in\Sym^3(\set{W})\subseteq\Sym^3(\set{U})$, then its CP rank and symmetric rank
are the same whether computed in $\set{W}$ or in $\set{U}$.
In particular, one may
replace $\set{U}$ by $\Ess(\tens{T})$ without changing either rank.
\end{lemma}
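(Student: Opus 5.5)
The statement to prove is \cref{lem:concise-reduction}. The plan is to compare decompositions in $\set{U}$ and in $\set{W}$ by means of a linear projection. Any decomposition with factors in $\set{W}$ is also a decomposition in $\set{U}$, so each rank computed in $\set{U}$ is at most the rank computed in $\set{W}$. For the reverse inequality I will push decompositions in $\set{U}$ down to $\set{W}$ and check that the length does not increase.

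\textbf{Step 1: the projection.} Choose a complement $\set{W}'$ with $\set{U}=\set{W}\oplus\set{W}'$, and let $\matr{\Pi}:\set{U}\to\set{W}$ be the projection along $\set{W}'$. Then $\matr{\Pi}$ restricts to the identity on $\set{W}$. Consequently $\matr{\Pi}^{\otimes3}$ restricts to the identity on $\set{W}^{\otimes3}$, and in particular on $\Sym^3(\set{W})$. Since $\tens{T}\in\Sym^3(\set{W})$, this gives $\matr{\Pi}^{\otimes3}\tens{T}=\tens{T}$.

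\textbf{Step 2: the CP rank.} Take a decomposition $\tens{T}=\sum_{i=1}^q\lambda_i\vect{a}_i\otimes\vect{b}_i\otimes\vect{c}_i$ in $\set{U}$. Applying $\matr{\Pi}^{\otimes3}$ yields
\[
\tens{T}=\sum_{i=1}^q\lambda_i\,\matr{\Pi}\vect{a}_i\otimes\matr{\Pi}\vect{b}_i\otimes\matr{\Pi}\vect{c}_i .
\]
This is a decomposition of the same length with every factor in $\set{W}$. Any summand with a zero factor vanishes and can be dropped, which only shortens the decomposition.

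\textbf{Step 3: the symmetric rank.} The same argument applies to $\tens{T}=\sum_i\lambda_i\vect{u}_i^{\otimes3}$. Using the identity $\matr{\Pi}^{\otimes3}(\vect{u}^{\otimes3})=(\matr{\Pi}\vect{u})^{\otimes3}$, the image is again a sum of cubes of vectors in $\set{W}$, of the same length. This gives equality for both ranks.

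\textbf{The final assertion.} It follows by taking $\set{W}=\Ess(\tens{T})$, since by definition $\tens{T}\in\Sym^3(\Ess(\tens{T}))$.

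I expect no real obstacle. The only point requiring care is that $\matr{\Pi}$ must be the identity on $\set{W}$, which justifies $\matr{\Pi}^{\otimes3}\tens{T}=\tens{T}$. This uses only $\tens{T}\in\set{W}^{\otimes3}$, which holds because $\Sym^3(\set{W})\subseteq\set{W}^{\otimes3}$, and does not require minimality of $\set{W}$.
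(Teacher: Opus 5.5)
Your proposal is correct and follows the same route the paper indicates: the paper omits the proof, noting only that the lemma follows by applying a linear projection from $\set{U}$ onto $\set{W}$. Your projection $\matr{\Pi}$, with $\matr{\Pi}^{\otimes3}\tens{T}=\tens{T}$ carrying any CP or Waring decomposition in $\set{U}$ to one of the same length in $\set{W}$, is exactly that argument.
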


We next recall the following standard properties of a minimum CP decomposition; see, e.g., \cite{landsberg2012tensors,comon2008symmetric}. They follow directly
from the usual substitution and contraction arguments, so we also omit the details.

\begin{lemma}[Minimum CP decompositions]
\label{lem:minimum-cpd}
Let $\set{U}_1,\set{U}_2,\set{U}_3$ be real vector spaces, let
$\tens{T}\in\set{U}_1\otimes\set{U}_2\otimes\set{U}_3$, and suppose that
\[
\tens{T}=\sum_{i=1}^{r(\tens{T})}
\vect{a}_i\otimes\vect{b}_i\otimes\vect{c}_i
\]
is a minimum CP decomposition, where
$\vect{a}_i\in\set{U}_1$, $\vect{b}_i\in\set{U}_2$, and
$\vect{c}_i\in\set{U}_3$. Then each of the three families
\[
(\vect{a}_i\otimes\vect{b}_i)_{i=1}^{r(\tens{T})},\ \ 
(\vect{b}_i\otimes\vect{c}_i)_{i=1}^{r(\tens{T})},\ \ 
(\vect{c}_i\otimes\vect{a}_i)_{i=1}^{r(\tens{T})},
\]
is linearly independent. If $\set{U}_1=\set{U}_2=\set{U}_3=\set{U}$ and
$\tens{T}\in\Sym^3(\set{U})$, then
the spans of the three vector families
$(\vect{a}_i)_{i=1}^{r(\tens{T})}$,
$(\vect{b}_i)_{i=1}^{r(\tens{T})}$ and
$(\vect{c}_i)_{i=1}^{r(\tens{T})}$
all coincide with $\Ess(\tens{T})$. Moreover, the rank-one tensors
$(\vect{a}_i\otimes\vect{b}_i\otimes\vect{c}_i)_
{i=1}^{r(\tens{T})}$
are linearly independent.
\end{lemma}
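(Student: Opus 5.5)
The plan is to derive everything from one minimality principle: if some family could be linearly dependent, we would rewrite $\tens{T}$ with fewer than $r=r(\tens{T})$ rank-one terms, which is impossible. Throughout, $\tens{T}=\sum_{i=1}^{r}\vect{a}_i\otimes\vect{b}_i\otimes\vect{c}_i$ is a minimum decomposition, and none of the factors vanish, since a zero term could simply be dropped.

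\emph{Independence of the pair families.} For $(\vect{a}_i\otimes\vect{b}_i)_i$, suppose after reindexing that $\vect{a}_r\otimes\vect{b}_r=\sum_{i<r}\mu_i\,\vect{a}_i\otimes\vect{b}_i$. Substituting into the decomposition gives
\[
\tens{T}=\sum_{i<r}\vect{a}_i\otimes\vect{b}_i\otimes(\vect{c}_i+\mu_i\vect{c}_r),
\]
which has $r-1$ terms and contradicts minimality. The same substitution, applied in the other pairs of modes, handles $(\vect{b}_i\otimes\vect{c}_i)_i$ and $(\vect{c}_i\otimes\vect{a}_i)_i$. Independence of the rank-one tensors $(\vect{a}_i\otimes\vect{b}_i\otimes\vect{c}_i)_i$ then follows at once. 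Indeed, if $\sum_i\lambda_i\,\vect{a}_i\otimes\vect{b}_i\otimes\vect{c}_i=0$, contract the third factor with any $\gamma\in\set{U}_3^*$ to get $\sum_i\lambda_i\gamma(\vect{c}_i)\,\vect{a}_i\otimes\vect{b}_i=0$. Independence of the pairs forces $\lambda_i\gamma(\vect{c}_i)=0$ for every $\gamma$. Since $\vect{c}_i\ne0$, some $\gamma$ has $\gamma(\vect{c}_i)\neq0$, so $\lambda_i=0$.

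\emph{Spans in the symmetric case.} Let $\set{A}=\Span(\vect{a}_i)$, and similarly define $\set{B}$ and $\set{C}$. Clearly $\tens{T}\in\set{A}\otimes\set{U}\otimes\set{U}$.
\begin{itemize}
\item First, $\Ess(\tens{T})\subseteq\set{A}$. Take $\alpha\in\set{A}^\perp$. Contracting $\tens{T}$ in the first mode against $\alpha$ gives zero, and by symmetry this is the same as $\matr{F}_{\tens{T}}(\alpha)=0$. Hence $\set{A}^\perp\subseteq\ker\matr{F}_{\tens{T}}$, and \cref{lem:contraction-essential-support} gives $\Ess(\tens{T})=(\ker\matr{F}_{\tens{T}})^\perp\subseteq\set{A}$.
\item Second, $\set{A}\subseteq\Ess(\tens{T})$. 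Take $\alpha\in\Ess(\tens{T})^\perp=\ker\matr{F}_{\tens{T}}$. Then $0=\sum_i\alpha(\vect{a}_i)\,\vect{b}_i\otimes\vect{c}_i$, and independence of $(\vect{b}_i\otimes\vect{c}_i)_i$ forces $\alpha(\vect{a}_i)=0$ for all $i$. So $\set{A}\subseteq(\Ess(\tens{T})^\perp)^\perp=\Ess(\tens{T})$.
\end{itemize}
Thus $\set{A}=\Ess(\tens{T})$. For $\set{B}$ and $\set{C}$ the same argument works by contracting in the second or third mode, which symmetry of $\tens{T}$ identifies with $\matr{F}_{\tens{T}}$, and by using independence of $(\vect{c}_i\otimes\vect{a}_i)_i$ and $(\vect{a}_i\otimes\vect{b}_i)_i$ respectively.

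The only delicate step is this identification of mode-$k$ contractions with $\matr{F}_{\tens{T}}$. It needs symmetry of $\tens{T}$, not of the individual terms. Everything else is routine bookkeeping.
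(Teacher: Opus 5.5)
Your proof is correct and follows the route the paper indicates; the paper omits the details and cites ``the usual substitution and contraction arguments''. You use substitution for independence of the pair families, then contraction against those independent families for independence of the triples and, via $\Ess(\tens{T})=(\ker\matr{F}_{\tens{T}})^\perp$, for equality of the spans. One small remark: for the first mode no symmetry is needed, since $\matr{F}_{\tens{T}}$ is itself the first-mode contraction, so symmetry enters only for the second and third modes.
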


\subsection{Deleting displayed cubes}
\label{subsec:displayed-cube-deletion}

A summand $\vect{a}_i\otimes \vect{b}_i\otimes \vect{c}_i$ in a CP decomposition of a
symmetric tensor will be called a \emph{displayed cube} if
$\vect{a}_i,\vect{b}_i,\vect{c}_i$ are proportional.
The following elementary reduction removes all such summands; its
proof uses minimality and compares the ranks of the residual factor
matrices.

\begin{lemma}[Deletion of displayed cubes]
\label{lem:displayed-cube-deletion}
Let $\tens{T}\in\Sym^3(\set{U})$, and fix a minimum CP decomposition
\[
\tens{T}=\sum_{i=1}^{r(\tens{T})}\vect{a}_i\otimes \vect{b}_i\otimes \vect{c}_i.
\]
Let
$\set{C}
=\{i:\dim\Span\{\vect{a}_i,\vect{b}_i,\vect{c}_i\}=1\},$ 
and write
$$\tens{M}=\sum_{i\in\set{C}}\vect{a}_i\otimes \vect{b}_i\otimes \vect{c}_i, \ \ 
\tens{R}=\sum_{i\notin\set{C}}\vect{a}_i\otimes \vect{b}_i\otimes \vect{c}_i.$$ 
Then $\tens{M}$ is a sum of $|\set{C}|$ real cubes, $\tens{R}$ is symmetric, and
the displayed decomposition of $\tens{R}$ is minimum and cube-free.  Moreover, we have 
$r(\tens{R})=r(\tens{T})-|\set{C}|$ and  $0\le d(\tens{T})-d(\tens{R})\le|\set{C}|,$ 
and consequently
\[
e(\tens{R})\le e(\tens{T}),
\ \ 
\DeltaT(\tens{T})\le\DeltaT(\tens{R}).
\]
Here $r(\tens{0})=s(\tens{0})=d(\tens{0})=0$.
\end{lemma}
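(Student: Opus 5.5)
We prove the claims of \cref{lem:displayed-cube-deletion} in the order they are stated, using only \cref{lem:contraction-essential-support,lem:minimum-cpd}.

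\emph{$\tens{M}$ is a sum of cubes.} For $i\in\set{C}$ write $\vect{a}_i=\alpha_i\vect{u}_i$, $\vect{b}_i=\beta_i\vect{u}_i$, $\vect{c}_i=\gamma_i\vect{u}_i$ with $\vect{u}_i\neq0$. Minimality forces every summand to be nonzero. Hence each summand equals $\alpha_i\beta_i\gamma_i\,\vect{u}_i^{\otimes3}$, a real multiple of a cube. Since real cube roots exist, the scalar can even be absorbed into $\vect{u}_i$. So $\tens{M}$ is symmetric and $s(\tens{M})\le|\set{C}|$.

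\emph{$\tens{R}$ and its displayed decomposition.} $\tens{R}=\tens{T}-\tens{M}$ is symmetric. Its displayed decomposition is minimum: if $r(\tens{R})<r(\tens{T})-|\set{C}|$, then adding the $|\set{C}|$ cubes of $\tens{M}$ would give a CP decomposition of $\tens{T}$ shorter than $r(\tens{T})$, a contradiction. Hence $r(\tens{R})=r(\tens{T})-|\set{C}|$. Cube-freeness holds by the definition of $\set{C}$. If $\set{C}$ is everything, then $\tens{R}=\tens{0}$, which is covered by the stated convention.

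\emph{Essential dimensions.} By \cref{lem:minimum-cpd} applied to the minimum decomposition of $\tens{T}$, we have $\Ess(\tens{T})=\Span\{\vect{a}_i\}_{i}$. Applied to the minimum decomposition of $\tens{R}$ (when $\tens{R}\neq0$), it gives $\Ess(\tens{R})=\Span\{\vect{a}_i\}_{i\notin\set{C}}$. The second span lies inside the first and misses at most the $|\set{C}|$ vectors $\vect{a}_i$ with $i\in\set{C}$. This yields $0\le d(\tens{T})-d(\tens{R})\le|\set{C}|$. This identification of $\Ess(\tens{R})$ through the residual factor columns is the only point that genuinely needs the minimality of the residual decomposition. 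That is why minimality of $\tens{R}$ is established first.

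\emph{Consequences.} For the excess,
\[
e(\tens{R})=r(\tens{T})-|\set{C}|-d(\tens{R})\le r(\tens{T})-|\set{C}|-d(\tens{T})+|\set{C}|=e(\tens{T}).
\]
For the gap, combine a minimum symmetric decomposition of $\tens{R}$ with the $|\set{C}|$ cubes of $\tens{M}$. This gives
\[
s(\tens{T})\le s(\tens{R})+|\set{C}|,
\]
so
\[
\DeltaT(\tens{T})\le s(\tens{R})+|\set{C}|-r(\tens{R})-|\set{C}|=\DeltaT(\tens{R}).
\]

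No step is a serious obstacle. The only delicate points are ordering the argument so that minimality of the residual comes before the essential-space identification, and handling the case $\tens{R}=\tens{0}$ separately.
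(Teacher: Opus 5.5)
Your proposal is correct and follows the paper's proof essentially step for step. You get minimality of the residual decomposition by adjoining the $|\set{C}|$ deleted cubes, then apply \cref{lem:minimum-cpd} to both minimum decompositions, so that deleting $|\set{C}|$ factor columns lowers the span dimension from $d(\tens{T})$ to $d(\tens{R})$ by at most $|\set{C}|$; the excess and gap bookkeeping is also the same, and your remarks that minimum summands are nonzero and that the case $\tens{R}=\tens{0}$ needs separate handling fill in small details the paper leaves implicit.
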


\begin{proof}
Every summand indexed by $\set{C}$ is a scalar multiple of a real
cube.  Hence $\tens{M}$ and $\tens{R}=\tens{T}-\tens{M}$ are symmetric, and the displayed
decomposition of $\tens{R}$ is cube-free.  If $\tens{R}$ admitted a CP decomposition
with fewer than $r(\tens{T})-|\set{C}|$ terms, adjoining the displayed
cubes in $\tens{M}$ would give a CP decomposition of $\tens{T}$ shorter than the
chosen one.  Thus $r(\tens{R})=r(\tens{T})-|\set{C}|$.  By \cref{lem:minimum-cpd}, each factor matrix of the original decomposition has rank
$d(\tens{T})$, whereas the corresponding residual factor matrix has rank
$d(\tens{R})$.  Deleting $|\set{C}|$ columns lowers matrix rank by at most
$|\set{C}|$, and hence $0\le d(\tens{T})-d(\tens{R})\le|\set{C}|$.
Therefore, we have
\[
e(\tens{R})
=r(\tens{T})-|\set{C}|-d(\tens{R})
=e(\tens{T})-|\set{C}|+d(\tens{T})-d(\tens{R}) 
\le e(\tens{T}).
\]
Finally, we have 
$s(\tens{T})\le |\set{C}|+s(\tens{R})$ and $r(\tens{T})=|\set{C}|+r(\tens{R})$, 
so $\DeltaT(\tens{T})\le\DeltaT(\tens{R})$.
\end{proof}

A transverse rank-one polar gives an exact deletion even when its cube
is not displayed in the chosen minimum CP decomposition.

\begin{lemma}[Transverse rank-one deletion]
\label{lem:transverse-rank-one-deletion}
Let $\tens{T}\in\Sym^3(\set{V})$ be concise.  Suppose
$\alpha\mathbin{\lrcorner}\tens{T}=q\vect{\ell}^{2}$ with
$q\ne0$ and $\alpha(\vect{\ell})\ne0$, and put
$\tens{R}=\tens{T}-q\vect{\ell}^3/\alpha(\vect{\ell})$.
Then
\[
 \tens{R}\in\Sym^3(\ker\alpha),\ \ 
 r(\tens{R})=r(\tens{T})-1,\ \ 
 d(\tens{R})=d(\tens{T})-1.
\]
In particular, $e(\tens{R})=e(\tens{T})$ and
$\DeltaT(\tens{T})\le\DeltaT(\tens{R})$.
\end{lemma}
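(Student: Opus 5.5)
The plan is to check the three claims directly. The contraction identity gives membership in $\Sym^3(\ker\alpha)$. The dimension count comes from a two-sided flattening estimate. The rank drop comes from projecting a minimum CP decomposition of $\tens{T}$ in a way that kills one summand.

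\emph{Support and dimension.} Contracting gives
\[
\alpha\mathbin{\lrcorner}\tens{R}=q\vect{\ell}^2-\frac{q\,\alpha(\vect{\ell})}{\alpha(\vect{\ell})}\vect{\ell}^2=0 .
\]
So $\alpha\in\ker\matr{F}_{\tens{R}}$, and \cref{lem:contraction-essential-support} yields $\Ess(\tens{R})=(\ker\matr{F}_{\tens{R}})^\perp\subseteq\ker\alpha$. Hence $\tens{R}\in\Sym^3(\ker\alpha)$, and since $\tens{T}$ is concise, $d(\tens{R})\le\dim\set{V}-1=d(\tens{T})-1$. Conversely, $\matr{F}_{\tens{T}}=\matr{F}_{\tens{R}}+\matr{F}_{c\vect{\ell}^3}$ with $c=q/\alpha(\vect{\ell})$. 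Since flattening rank is subadditive and the second map has rank one, $d(\tens{T})\le d(\tens{R})+1$. Therefore $d(\tens{R})=d(\tens{T})-1$.

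\emph{Rank.} The bound $r(\tens{T})\le r(\tens{R})+1$ is immediate from $\tens{T}=\tens{R}+c\vect{\ell}^3$. For the reverse inequality, fix a minimum decomposition $\tens{T}=\sum_{i=1}^{r}\vect{a}_i\otimes\vect{b}_i\otimes\vect{c}_i$ with $r=r(\tens{T})$. Let $\pi(\vect{v})=\vect{v}-\alpha(\vect{v})\vect{\ell}/\alpha(\vect{\ell})$ be the projection onto $\ker\alpha$ along $\vect{\ell}$. For any $\vect{w}\in\set{V}$, let $\varphi_{\vect{w}}(\vect{v})=\vect{v}+\alpha(\vect{v})\vect{w}$; this map fixes $\ker\alpha$ pointwise. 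Because $\tens{R}\in(\ker\alpha)^{\otimes3}$, the operator $\varphi_{\vect{w}}\otimes\pi\otimes\pi$ fixes $\tens{R}$. Since $\pi(\vect{\ell})=0$, it also annihilates $\vect{\ell}^{\otimes3}$. Hence
\[
\tens{R}=(\varphi_{\vect{w}}\otimes\pi\otimes\pi)\tens{T}=\sum_{i=1}^{r}\varphi_{\vect{w}}(\vect{a}_i)\otimes\pi(\vect{b}_i)\otimes\pi(\vect{c}_i).
\]
Contracting the decomposition in the first mode gives $\sum_i\alpha(\vect{a}_i)\,\vect{b}_i\otimes\vect{c}_i=q\vect{\ell}\otimes\vect{\ell}\neq0$, so some index $j$ has $\alpha(\vect{a}_j)\ne0$. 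Choosing $\vect{w}=-\vect{a}_j/\alpha(\vect{a}_j)$ makes $\varphi_{\vect{w}}(\vect{a}_j)=0$. The $j$-th summand then vanishes, so $r(\tens{R})\le r-1$, and therefore $r(\tens{R})=r(\tens{T})-1$. This freedom to choose $\varphi_{\vect{w}}$ in one mode is the only non-routine step.

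\emph{Consequences.} Combining the two counts gives $e(\tens{R})=(r(\tens{T})-1)-(d(\tens{T})-1)=e(\tens{T})$. Finally, $s(\tens{T})\le s(\tens{R})+1$ because $\tens{T}=\tens{R}+c\vect{\ell}^3$ adds one real cube. Together with $r(\tens{T})=r(\tens{R})+1$, this gives $\DeltaT(\tens{T})\le\DeltaT(\tens{R})$.
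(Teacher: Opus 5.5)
Your proof is correct and is essentially the paper's argument. It uses the same projection along $\vect{\ell}$ in the last two modes, and your $\varphi_{\vect{w}}$ with $\vect{w}=-\vect{a}_j/\alpha(\vect{a}_j)$ is exactly the paper's projection along a first factor with $\alpha(\vect{a}_j)\ne0$ in the first mode. The only cosmetic difference is that you obtain $d(\tens{T})\le d(\tens{R})+1$ from subadditivity of flattening rank, whereas the paper uses the equivalent inclusion $\Ess(\tens{T})\subseteq\RR\vect{\ell}+\Ess(\tens{R})$.
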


\begin{proof}
% LEAN: transverse_rank_one_deletion (including selection/reordering
% of the CP term); transverse_deletion_invariants verifies e and Delta.
% essentialSpace_finrank supplies the actual dimension.
Since $\alpha\mathbin{\lrcorner}\tens{R}=\tens{0}$, symmetry and
\cref{lem:contraction-essential-support} give
$\tens{R}\in\Sym^3(\ker\alpha)$.
Choose a minimum CP decomposition of $\tens{T}$.
The nonzero polar forces some first factor $\vect{a}_k$ to satisfy
$\alpha(\vect{a}_k)\ne0$.  Define
\[
 \matr{\Pi}(\vect{v})=\vect{v}-\frac{\alpha(\vect{v})}{\alpha(\vect{\ell})}\vect{\ell},
 \ \ 
 \matr{P}(\vect{v})=\vect{v}-\frac{\alpha(\vect{v})}{\alpha(\vect{a}_k)}\vect{a}_k.
\]
Both maps restrict to the identity on $\ker\alpha$, while
$\matr{\Pi}(\vect{\ell})=\vect{0}$ and $\matr{P}(\vect{a}_k)=\vect{0}$.  Consequently,
\[
 (\matr{P}\otimes\matr{\Pi}\otimes\matr{\Pi})\tens{T}=\tens{R},
 \ \  r(\tens{R})\le r(\tens{T})-1\le r(\tens{R}).
\]
The second inequality follows by adjoining the removed cube.
Also $d(\tens{R})\le d(\tens{T})-1$, whereas
$\Ess(\tens{T})\subseteq\RR\vect{\ell}+\Ess(\tens{R})$
gives the reverse inequality.  The last assertions follow from these
identities and $s(\tens{T})\le1+s(\tens{R})$.
\end{proof}

\subsection{Symmetrization identities}

We identify $\Sym^k(\set{U})$ with the space of homogeneous
polynomial functions of degree $k$ on $\set{U}^*$.
Thus, vectors in $\set{U}$ are viewed as linear forms on
$\set{U}^*$.
In expressions for tensor subspaces, a vector in a tensor factor
denotes its linear span; for example,
$\set{H}\otimes\vect{x}\otimes\vect{x}$ abbreviates
$\set{H}\otimes\RR\vect{x}\otimes\RR\vect{x}$.
%We distinguish zero objects typographically: $0$ denotes a scalar,$\vect{0}$ a vector, multivector, matrix, or a tensor. 
Dual functionals are denoted by unbold Greek letters.  For
$\alpha\in\set{U}^*$, the contraction
$\alpha\mathbin{\lrcorner}\tens{T}$ acts on the first tensor
factor.  On cubics it corresponds to $\frac13\partial_\alpha$;
in particular,
$\alpha\mathbin{\lrcorner}\vect{v}^3
=\alpha(\vect{v})\vect{v}^2$.

For $\vect{a},\vect{b},\vect{c},\vect{u},\vect{v}\in\set{U}$, we
use the normalized symmetrization
\[
\sym(\vect{a}\otimes\vect{b}\otimes\vect{c})
\eqdef
\frac{1}{6}\sum_{\pi\in\mathfrak S_3}
\vect{f}_{\pi(1)}\otimes\vect{f}_{\pi(2)}
\otimes\vect{f}_{\pi(3)},
\]
where
$(\vect{f}_1,\vect{f}_2,\vect{f}_3)
=(\vect{a},\vect{b},\vect{c})$.
Under the identification of symmetric tensors with homogeneous
polynomials, this tensor corresponds to the cubic product
$\vect{a}\vect{b}\vect{c}$. The following real polarization identities
will be used repeatedly:
\begin{align}
24\vect{a}\vect{b}\vect{c}
={}&(\vect{a}+\vect{b}+\vect{c})^3
+(\vect{a}-\vect{b}-\vect{c})^3
+(-\vect{a}+\vect{b}-\vect{c})^3
+(-\vect{a}-\vect{b}+\vect{c})^3,
\label{eq:four-cube}\\
6\vect{u}\vect{v}^{2}
={}&(\vect{u}+\vect{v})^3
+(\vect{u}-\vect{v})^3
-2\vect{u}^{3}.
\label{eq:three-cube}
\end{align}
It follows from \eqref{eq:four-cube} that the symmetrization of an
arbitrary simple tensor has symmetric rank at most four. If two of its
factors are proportional, \eqref{eq:three-cube} improves this bound to
three.

\subsection{Generic and maximum Waring ranks}

Set $\gthree(0)=0$.  For $d\geq1$, let $\gthree(d)$ denote
the \emph{generic complex Waring rank} of a cubic form in $d$ variables; see,
for example, \cite{hirschowitz1995polynomial,brambilla2008alexander}.
The Alexander--Hirschowitz theorem gives
\begin{equation}
\gthree(d)=
\begin{cases}
\left\lceil\dfrac{(d+1)(d+2)}{6}\right\rceil,
& d\neq5,\\[6pt]
8,& d=5.
\end{cases}
\label{eq:generic-cubic-rank}
\end{equation}
Thus, the case of cubic forms in five variables is the unique
exception to the expected generic-rank formula in degree three. 
For $d\geq1$, define the maximum real Waring rank
\begin{equation}\label{eq:m_3_R}
\Mthree(d)\eqdef
\max\{s(\tens{T}):\tens{T}\in\Sym^3(\RR^d)\},
\end{equation}
and set
$\Mthree(0)\eqdef0$ \cite{blekherman2015maximum}.
This maximum is finite because cubes of real linear forms span the
finite-dimensional space $\Sym^3(\RR^d)$.  Blekherman and Teitler prove
$\Mthree(d)\leq2\gthree(d)$ \cite[Theorems~2 and~3]{blekherman2015maximum};
in particular, the exceptional value $\gthree(5)=8$ gives
$\Mthree(5)\leq16$.  The stronger internal estimate
$\Mthree(5)\leq11$ follows below from
\cref{prop:shifted-maximum-rank}. 
For later comparison, set $\Vthree(0)=0$ and
$\Vthree(d)\eqdef2\gthree(d-1)+1$ for $d\geq1$.  The shifted bound
$\Mthree(d)\leq\Vthree(d)$ will be proved in
\cref{prop:shifted-maximum-rank}.

\section{Main results}
\label{sec:main-results}

The two principal results are arranged by the parameter controlling the
Comon rank gap.  The first uses the rank excess and the second uses the
essential dimension.  We state immediately after each theorem the
propositions from which it follows.

\subsection{Bounds controlled by rank excess}

For $e\in\mathbb Z_{\geq0}$ define
\begin{equation}
\label{eq:def-C-excess}
\Ce(e)=
\begin{cases}
0,&0\leq e\leq3,\\
9e-3,&e\geq4.
\end{cases}
\end{equation}

The first main theorem bounds the gap by the rank excess alone.
\begin{theorem}[Rank-excess bound]
\label{thm:rank-excess-main}
For every real vector space $\set{U}$ and every
$\tens{T}\in\Sym^3(\set{U})$,
\[
\DeltaT(\tens{T})\leq \Ce\bigl(e(\tens{T})\bigr).
\]
\end{theorem}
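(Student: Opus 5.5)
The proof splits into the two regimes of $\Ce$, after a common reduction. First I apply \cref{lem:displayed-cube-deletion} to a fixed minimum CP decomposition of $\tens{T}$. This replaces $\tens{T}$ by a residual $\tens{R}$ whose displayed decomposition is minimum and cube-free, with $e(\tens{R})\le e(\tens{T})$ and $\DeltaT(\tens{T})\le\DeltaT(\tens{R})$. Since $\Ce$ is nondecreasing, it suffices to prove the theorem for cube-free minimum decompositions. By \cref{lem:concise-reduction} I may also assume $\tens{R}$ concise in $\set{V}=\Ess(\tens{R})$, with $d=\dim\set{V}$ and $r=d+e$. Whenever a transverse rank-one polar exists, \cref{lem:transverse-rank-one-deletion} lets me peel it off without changing $e$ and without decreasing the gap. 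I will iterate both deletions until neither applies, so I may assume a \emph{saturated} situation.

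For the linear bound $e\ge4$, I write $\tens{R}=\sum_{i=1}^r\vect{a}_i\otimes\vect{b}_i\otimes\vect{c}_i$ and impose symmetry: $\tens{R}-\tau\tens{R}=0$ for the transpositions $\tau$. This gives the alternating equations
\[
\sum_i(\vect{a}_i\wedge\vect{b}_i)\otimes\vect{c}_i=0
\]
in $\Lambda^2\set{V}\otimes\set{V}$, and similarly for the other pairs. By \cref{lem:minimum-cpd}, the families $\vect{a}_i\otimes\vect{b}_i$ are independent and each factor family spans $\set{V}$. Choosing a basis among the $\vect{c}_i$ (say $d$ of them), the remaining $e$ columns are combinations of the basis ones. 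Substituting yields relations showing that each of the $d$ basis-indexed decomposable bivectors $\vect{a}_i\wedge\vect{b}_i$ is a combination of the $e$ non-basis bivectors. Hence the nonzero bivectors span a space of dimension at most $e$. Only terms with $\vect{a}_i\wedge\vect{b}_i\ne0$ (and likewise for the other pairs) fail to be cubes in the sense of having a symmetric defect. A support count then shows that at most about $3e$ indices carry a genuinely non-symmetric term, and all other terms have $\vect{a}_i,\vect{b}_i,\vect{c}_i$ mutually proportional, which cube-freeness forbids. So in the saturated case $r\le 3e+O(1)$, and I then expect $d\le 2e$ or so. I finish by symmetrizing every term: $\tens{R}=\sum_i\sym(\vect{a}_i\otimes\vect{b}_i\otimes\vect{c}_i)$. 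Each term costs at most $4$ cubes by \eqref{eq:four-cube}, or $3$ when two factors are proportional by \eqref{eq:three-cube}. This gives $s(\tens{R})\le 4r$ with savings, hence $\DeltaT\le 3r$. Tuning the support count to $r\le3e-1$ gives exactly $9e-3$. The step I expect to be delicate is this support count: bounding the number of non-cube indices by $3e-1$ rather than something weaker. I would first try to show that the decomposable bivectors $\vect{a}_i\wedge\vect{b}_i$, lying in an $e$-dimensional span, have pairwise intersecting supports. Such a family spans at most a $(e+1)$-dimensional subspace, or all lie in $\Lambda^2$ of a $3$-space, and this bounds the number of involved indices.

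For $0\le e\le3$, I argue by induction on $d$ using a low-wedge descent. The case $e\le1$ is Friedland's criterion. For $e=2,3$, the bivector span above has dimension at most $3$. A small-dimensional space of decomposable bivectors is either contained in $\Lambda^2$ of a $3$-space or consists of bivectors sharing a common vector $\vect{\ell}$. In the common-vector case, I choose $\alpha$ vanishing on the other supporting vectors with $\alpha(\vect{\ell})\ne0$. Then $\alpha\mathbin{\lrcorner}\tens{R}$ is a quadric whose rank I bound; if it is rank one it triggers \cref{lem:transverse-rank-one-deletion}, contradicting saturation. Otherwise a partial symmetrization (partialization) replaces two non-cube terms by cubes at no extra cost. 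The $3$-space case reduces to ternary cubics, where real Waring ranks are classically known and agree with CP rank. The main obstacle overall is this case analysis for $e=3$, where a quadric polar of rank two could occur and must be excluded or handled by an explicit real identity.
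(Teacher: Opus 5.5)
Your proposal has a genuine gap in each of the two regimes.

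\textbf{The linear branch.} The step you flag as delicate, tuning the count to $r\le 3e-1$ so that $\DeltaT\le3r\le9e-3$, cannot be carried out, because that inequality is false for saturated cube-free residuals.

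Take $\tens{R}=\sum_{k=1}^{e}x_k^2y_k$ in $2e$ variables, with its natural decomposition into the $3e$ terms $x_k\otimes x_k\otimes y_k$, $x_k\otimes y_k\otimes x_k$, $y_k\otimes x_k\otimes x_k$.
\begin{itemize}
\item It is minimum: rank is additive here, e.g.\ by the Ja'Ja'--Takche theorem over $\mathbb{C}$, and real rank dominates complex rank.
\item It contains no displayed cube.
\item Every rank-one polar $\alpha\mathbin{\lrcorner}\tens{R}$ is a multiple of some $x_k^2$ with $\alpha(x_k)=0$, so \cref{lem:transverse-rank-one-deletion} never applies.
\end{itemize}
Yet $d=2e$ and $r=3e$. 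The same example defeats your hoped-for ``pairwise intersecting supports'', since the wedges $x_k\wedge y_k$ have disjoint supports.

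The missing idea is that the paper never bounds $r$ alone. It keeps the three-cube price of terms with two proportional factors (the sets $\set{I}_a,\set{I}_b,\set{I}_c$), so that $\DeltaT\le3r-(|\set{I}_a|+|\set{I}_b|+|\set{I}_c|)$. It then counts those terms from below. The $\vect{b}_i$ with $i\notin\set{I}_a$ lie in $\set{H}_{bc}$, while all the $\vect{b}_i$ span $\set{V}$, so $|\set{I}_a|\ge d-h_{bc}$, and cyclically for the other two sets. This cancels $d$ and gives $\DeltaT\le3e+h_{ab}+h_{bc}+h_{ca}$ (\cref{lem:direct-cross-count}).

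Using only $h_{xy}\le2\ell_{xy}\le2e$, this yields $9e$. The $-3$ comes from a dichotomy:
\begin{itemize}
\item If some wedge family has $\nu(\set{P}_{xy})=\ell_{xy}$, direction grouping (\cref{lem:direction-group-partialization}) produces a minimum partial decomposition. After its cubes are deleted, every term costs three cubes and $r\le3e$, so $\DeltaT\le6e$.
\item Otherwise $\nu(\set{P}_{xy})\ge\ell_{xy}+1$ in all three pairs, and the Pl\"ucker relation $\vect{p}\wedge\vect{p}=\vect{0}$ forces $h_{xy}\le2\ell_{xy}-1$ (\cref{lem:plucker-support-saving}).
\end{itemize}

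\textbf{The range $e\le3$.} Your dichotomy (common vector, or inside $\bigwedge^2$ of a $3$-space) classifies linear spaces \emph{all} of whose elements are decomposable. The space $\set{L}_{bc}$ is only spanned by decomposable wedges. The example above with $e=2$ or $e=3$ has two or three disjoint support planes ($d=4$ or $6$) and falls under neither alternative, so your case analysis is not exhaustive. The two steps you do describe are also unsupported: that partial symmetrization replaces two non-cube terms by cubes at no cost, and that real CP and Waring ranks of ternary cubics ``classically'' agree.

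The paper argues by minimal counterexample, which has no displayed cube in any minimum decomposition.
\begin{itemize}
\item If all three wedge ranks are at most two, low-wedge partialization applies (\cref{lem:low-wedge-partialization}, using the real square covers of \cref{lem:square-covers-three}). Then \cref{lem:partial-low-second-wedge} gives $s=r$.
\item Otherwise $e=3$ and some wedge rank is three.
  \begin{itemize}
  \item The cases $d\le3$ are excluded by $\Mthree(d)<d+3$.
  \item The case $d=4$, $r=7$ needs the new bound $\Mthree(4)\le7$ (\cref{lem:four-variable-seven}).
  \item The case $d\ge5$ uses saturated descent (\cref{lem:excess-three-descent}). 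The isomorphism $\matr{\Phi}|_{\ker\matr{\Omega}}$ onto the polar space shows that, absent a transverse rank-one polar, the wedge supports fill $\set{V}$, so $d\le6$. The resulting block configurations are split by \cref{lem:small-polar-blocks}, with \cref{lem:four-bivector-configuration} handling $d=5$.
  \end{itemize}
\end{itemize}
Your plan has no counterpart to the $d=4$ bound or to the block splitting.
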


For arbitrary $e$, the basic wedge-support argument gives the following
uniform estimate.

\begin{proposition}[General rank-excess estimate]
\label{prop:linear-excess-main}
For every real vector space $\set{U}$ and every
$\tens{T}\in\Sym^3(\set{U})$,
\[
\DeltaT(\tens{T})
\leq\max\{0,9e(\tens{T})-3\}.
\]
\end{proposition}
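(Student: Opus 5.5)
We first reduce to the cube-free case and then symmetrize the residual term by term, charging the cost of symmetrization to $e$. By \cref{lem:displayed-cube-deletion}, fix a minimum CP decomposition $\tens{T}=\sum_{i=1}^{r}\vect{a}_i\otimes\vect{b}_i\otimes\vect{c}_i$ and pass to the residual $\tens{R}$. Its displayed decomposition is minimum and cube-free, with $e(\tens{R})\le e(\tens{T})$ and $\DeltaT(\tens{T})\le\DeltaT(\tens{R})$. It therefore suffices to prove $\DeltaT(\tens{R})\le\max\{0,9e(\tens{R})-3\}$. If $\tens{R}=\tens{0}$ the claim is trivial, so we assume that $\tens{R}$ is nonzero, concise in $\set{V}=\Ess(\tens{R})$ of dimension $d$, and has rank $r=d+e$ with no displayed cubes.

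The key step is to show that only $O(e)$ summands are genuinely non-symmetric. By symmetry, $\tens{R}$ equals its own symmetrization, so
\[
\tens{R}=\sum_{i}\sym(\vect{a}_i\otimes\vect{b}_i\otimes\vect{c}_i)=\sum_i \vect{a}_i\vect{b}_i\vect{c}_i.
\]
The alternating equations $\tens{R}-\tau\tens{R}=0$ for transpositions $\tau$ give, after contracting, a linear relation among the bivectors $\vect{a}_i\wedge\vect{b}_i$ (tensored with $\vect{c}_i$). Concretely, antisymmetrizing in the first two modes gives $\sum_i(\vect{a}_i\wedge\vect{b}_i)\otimes\vect{c}_i=0$. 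By \cref{lem:minimum-cpd}, the $\vect{c}_i$ span $\set{V}$, and the family $(\vect{b}_i\otimes\vect{c}_i)$ is independent. I would choose $d$ indices whose $\vect{c}_i$ form a basis; call this set $\set{B}$, and let $\set{E}$ be the remaining $e$ indices. Expanding $\vect{c}_j$ for $j\in\set{E}$ in this basis turns the relation into
\[
\vect{a}_i\wedge\vect{b}_i=-\sum_{j\in\set{E}}\lambda_{ji}\,\vect{a}_j\wedge\vect{b}_j\quad(i\in\set{B}),
\]
so every decomposable bivector $\vect{a}_i\wedge\vect{b}_i$ lies in a fixed span $\set{S}$ of at most $e$ decomposable bivectors. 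Doing the same for the other two transposition pairs gives spans of dimension at most $e$ containing all $\vect{b}_i\wedge\vect{c}_i$ and all $\vect{c}_i\wedge\vect{a}_i$. The wedge-support count then says that the union of the supports of at most $3e$ decomposable bivectors is a subspace $\set{W}$ of dimension at most $6e$. Every index $i$ with $\vect{a}_i,\vect{b}_i,\vect{c}_i$ not all proportional must have some nonzero wedge, and hence at least two of its factors lie in $\set{W}$. I would also expect to argue that indices whose three factors are all in $\set{W}$ number at most about $2e$, using independence of the rank-one terms and $\dim\Sym$-type counting.

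To finish, I would write $\tens{R}=\tens{R}_{\set{W}}+\tens{R}'$, where $\tens{R}_{\set{W}}$ collects the terms living inside $\set{W}$. For the remaining terms, the pieces of factors transverse to $\set{W}$ must, by symmetry of $\tens{R}$ (after projecting modulo $\set{W}$), pair up as $\vect{u}\vect{v}^2$-type or cube-type contributions. These I would symmetrize via \eqref{eq:three-cube}, which costs $2$ extra cubes per term but for at most $O(e)$ terms. The terms in $\set{W}$ I would bound via \eqref{eq:four-cube}, at cost $3$ extra per term. Tallying, I would aim for the extra count $3\cdot(\text{about }2e)+(\text{about }3e)$ minus an overlap saving of $3$, which gives $9e-3$. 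The constants will need tuning.

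The main obstacle is this middle bookkeeping: showing that the number of summands needing more than one cube is at most linear in $e$, with exactly the constants giving $9e-3$. In particular, I must handle summands whose factors are partially transverse to $\set{W}$ without paying for them. My first attempt would be an induction on $d$ using \cref{lem:transverse-rank-one-deletion}. If some $\alpha$ vanishing on $\set{W}$ yields a rank-one polar $q\vect{\ell}^2$ with $\alpha(\vect{\ell})\ne0$, delete it; this keeps $e$ fixed and does not decrease $\DeltaT$. Once no such $\alpha$ exists, conciseness forces $d\le\dim\set{W}$-type bounds, so $r=d+e=O(e)$. Direct polarization of all $r$ terms then gives $\DeltaT\le 3r$, and the $9e-3$ should come out of sharpening this.
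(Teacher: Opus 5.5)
\paragraph{Where the proposal falls short.} Your reduction to a cube-free residual is correct and matches the paper. So is the bound $\dim\Span\{\vect{a}_i\wedge\vect{b}_i\}\le e$ and its cyclic analogues. But the proposal stops exactly where the proof has to happen, and the picture behind your bookkeeping never occurs. In a cube-free minimum decomposition every factor already lies in $\set{W}$. If $\vect{a}_i\notin\set{W}$, then $\vect{a}_i\wedge\vect{b}_i=\vect{c}_i\wedge\vect{a}_i=\vect{0}$, so $\vect{b}_i,\vect{c}_i\in\RR\vect{a}_i$ and the summand is a displayed cube. Three consequences follow:
\begin{itemize}
\item $\set{W}=\set{V}$, so your transverse part $\tens{R}'$ is zero.
\item No nonzero $\alpha$ vanishes on $\set{W}$, so your induction via \cref{lem:transverse-rank-one-deletion} never starts.
\item The number of summands with all factors in $\set{W}$ is the whole rank $r=d+e$, not ``about $2e$''.
\end{itemize}
What your ingredients actually yield is $d\le\dim\set{W}\le6e$, hence $\DeltaT(\tens{R})\le3r\le21e$. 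Tuning constants in a term-by-term polarization of all $r$ summands cannot reach $9e-3$, because it has no way to cancel the $d$ hidden in $r=d+e$.

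\paragraph{The two missing ideas.} First, sort the summands by which pairwise wedge vanishes. A summand in $\set{I}_a\cup\set{I}_b\cup\set{I}_c$ has two proportional factors and costs three cubes by \eqref{eq:three-cube}. Only the set $\set{I}_x$ of summands with all three wedges nonzero costs four, so $\DeltaT(\tens{R})\le2r+|\set{I}_x|$. For $i\notin\set{I}_a$ one has $\vect{b}_i\in\set{H}_{bc}$, while the $\vect{b}_i$ span $\set{V}$; hence $|\set{I}_a|\ge d-h_{bc}$, and cyclically for $\set{I}_b,\set{I}_c$. Disjointness then gives $|\set{I}_x|\le e-2d+h_{ab}+h_{bc}+h_{ca}$ (\cref{lem:direct-cross-count}). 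The $-2d$ cancels the $2d$ inside $2r$, leaving $\DeltaT(\tens{R})\le3e+h_{ab}+h_{bc}+h_{ca}\le9e$.

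Second, the saving of $3$ comes from a dichotomy.
\begin{itemize}
\item If some wedge family represents only $\ell_{xy}$ distinct directions, grouping by direction produces a minimum partial decomposition (\cref{lem:direction-group-partialization}). After deleting displayed cubes again, its single alternating equation gives $d\le2e$ and $r\le3e$. Hence $\DeltaT\le2r\le6e\le9e-3$ (\cref{lem:partial-residual-bound}).
\item Otherwise every family has at least $\ell_{xy}+1$ directions. The Pl\"ucker relation $\vect{p}\wedge\vect{p}=\vect{0}$ then forces $h_{xy}\le2\ell_{xy}-1$ (\cref{lem:plucker-support-saving}), so $h_{ab}+h_{bc}+h_{ca}\le6e-3$.
\end{itemize}
Using $6e\le9e-3$ also requires $e(\tens{R})\ge1$ for a nonzero cube-free residual. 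This follows from your span bound, which at $e=0$ forces every wedge to vanish.
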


The zero-gap range is supplied by the following proposition.

\begin{proposition}[Zero gap through excess three]
\label{prop:excess-three-zero}
For every real vector space $\set{U}$ and every
$\tens{T}\in\Sym^3(\set{U})$,
\[
0\leq e(\tens{T})\leq3
\quad\Longrightarrow\quad
\DeltaT(\tens{T})=0.
\]
\end{proposition}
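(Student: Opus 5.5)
We plan to argue by induction on $r(\tens{T})$, reducing to the cube-free concise case and then deriving a contradiction, or an explicit symmetric decomposition, from the alternating equations of symmetry.

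\emph{Step 1: reduction to the cube-free concise case.} By \cref{lem:concise-reduction} we may assume $\set{U}=\Ess(\tens{T})$, so $\dim\set{U}=d=d(\tens{T})$ and $r=d+e$ with $e\le3$. Fix a minimum CP decomposition $\tens{T}=\sum_i\vect{a}_i\otimes\vect{b}_i\otimes\vect{c}_i$. If it has displayed cubes, \cref{lem:displayed-cube-deletion} gives a residual $\tens{R}$ with $r(\tens{R})<r(\tens{T})$, $e(\tens{R})\le e(\tens{T})\le3$ and $\DeltaT(\tens{T})\le\DeltaT(\tens{R})$, so induction applies. Likewise, if some $\alpha$ has $\alpha\mathbin{\lrcorner}\tens{T}$ of matrix rank one, transverse to $\alpha$, then \cref{lem:transverse-rank-one-deletion} reduces the rank while keeping $e$. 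We may therefore assume the decomposition is cube-free and admits no transverse rank-one polar.

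\emph{Step 2: the alternating equations.} Symmetry in the first two factors gives
\[
\sum_i(\vect{a}_i\wedge\vect{b}_i)\otimes\vect{c}_i=0
\]
in $\Lambda^2\set{U}\otimes\set{U}$, and similarly for the other pairs. Since the $\vect{c}_i$ span $\set{U}$ (\cref{lem:minimum-cpd}), we can choose $d$ indices with independent $\vect{c}_i$. The remaining $e\le3$ indices express the others, so the bivectors $\vect{p}_i=\vect{a}_i\wedge\vect{b}_i$ satisfy a linear system in which each of the $d$ basic bivectors is a combination of at most $e$ "excess" bivectors. Cube-freeness means not all three wedges $\vect{a}_i\wedge\vect{b}_i$, $\vect{b}_i\wedge\vect{c}_i$, $\vect{c}_i\wedge\vect{a}_i$ vanish. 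The plan is to show that the span of the nonzero decomposable bivectors has dimension at most $e\le3$. A subspace of $\Lambda^2$ of dimension $\le3$ spanned by decomposables has a very restricted support geometry: all its elements lie in $\Lambda^2$ of a small subspace, or they share a common vector. This should force a large block of the terms to satisfy $\vect{a}_i\wedge\vect{b}_i=0$ after partialization, meaning the terms are already partially symmetric.

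\emph{Step 3: low-wedge descent.} In each configuration of the bivector span (dimension $1,2,3$; pencil type or $\Lambda^2$ of a $3$-space type) we construct a symmetric decomposition of the same length. The approach is to find $\alpha$ annihilating the "wedge support" so that $\alpha\mathbin{\lrcorner}\tens{T}$ is a rank-one quadric transverse to $\alpha$. That contradicts the assumption of Step 1 and so yields a saturated descent. When no such $\alpha$ exists, the wedge support is a $\le3$-dimensional space $\set{S}$, and $\tens{T}$ splits as a part in $\Sym^3(\set{S})$ plus terms involving $\set{S}$ in at most one factor. The latter are handled by Friedland's criterion (the case $e\le1$) or by direct rewriting, and the former by the known equality of real CP and symmetric rank in $\le3$ variables.

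\emph{Main obstacle.} We expect the main difficulty to be the case analysis in Step 3 for $e=2,3$. There one must ensure that the descent strictly reduces $r$ without increasing $e$, and that the real decomposable-bivector configurations (which, unlike over $\mathbb C$, may include irreducible conic-type pencils) admit a rank-preserving symmetric rewrite. Our first attempt would be to show that the real pencil cases always produce a transverse rank-one polar, so that only the common-vector configurations remain.
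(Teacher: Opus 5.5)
There is a genuine gap. Your Step~1 (minimal counterexample, deletion of displayed cubes, and deletion of a transverse rank-one polar via \cref{lem:transverse-rank-one-deletion}) is how the paper begins. Step~3, however, rests on a false claim: that if no transverse rank-one polar exists, the wedge support has dimension at most three.

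Take $\tens{T}=\sum_{j=1}^{3}\vect{x}_j^2\vect{y}_j$ on $\RR^6$. Use the cube-free decomposition that writes each $\vect{x}_j^2\vect{y}_j$ as $\frac13(\vect{y}_j\otimes\vect{x}_j\otimes\vect{x}_j+\vect{x}_j\otimes\vect{y}_j\otimes\vect{x}_j+\vect{x}_j\otimes\vect{x}_j\otimes\vect{y}_j)$.
\begin{itemize}
\item Its CP rank is $9$. This holds already over $\mathbb{C}$, by additivity of rank for direct sums with a $2\times2\times2$ summand. Hence $d=6$ and $e=3$.
\item All three wedge ranks equal $3$, and the support hull is all of $\RR^6$.
\item $\alpha\mathbin{\lrcorner}\tens{T}$ has rank one only when $\alpha$ vanishes on all but one block $\Span\{\vect{x}_j,\vect{y}_j\}$ and $\alpha(\vect{x}_j)=0$. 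The polar is then a multiple of $\vect{x}_j^2$ and is tangent.
\end{itemize}
So neither branch of your Step~3 applies. Your proposed fix, showing that the remaining real configurations always produce a transverse rank-one polar, is refuted by the same example.

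The paper's \cref{lem:excess-three-descent} proves only that, when all rank-one polars are tangent, the support hull equals $\set{V}$ and $d\le6$. It then uses the polar-space parametrization and \cref{lem:four-bivector-configuration} to split the polar space into blocks on complementary binary or ternary subspaces. \cref{lem:small-polar-blocks} then splits $\tens{T}$ accordingly: three binary blocks for $d=6$, and ternary plus binary for $d=5$. The $2+2+1$ pattern is excluded because it would give $s\le7<8$. None of this appears in your plan.

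Three further ingredients are missing.
\begin{itemize}
\item \emph{The case $e=3$, $d=4$, $r=7$ with wedge rank three.} Your plan has no mechanism for it. The paper closes this case only via $\Mthree(4)\le7$ (\cref{lem:four-variable-seven}), a substantial real result, and nothing in your proposal replaces it.
\item \emph{Wedge rank at most two.} Saying that partialization should make many terms partially symmetric is not an argument. One needs the real square covers of \cref{lem:square-covers-three} together with \cref{lem:minimal-sublist-replacement} to obtain a partial decomposition of the same length (\cref{lem:low-wedge-partialization}). One then needs the classification $(d,r)\in\{(2,3),(3,5),(4,6)\}$ of \cref{lem:partial-low-second-wedge}, which uses $\Mthree(2)=3$ and $\Mthree(3)=5$.
\item \emph{The tools you invoke are not available as stated.} Friedland's criterion applies to a whole tensor with $r\le d+1$, not to pieces of a non-direct splitting, over which neither rank is additive in general. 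Equality of real CP and symmetric rank for ternary cubics is not a citable input. In the paper it is a consequence of this very proposition, since for $d=3$ the bound $r\le\Mthree(3)=5$ forces $e\le2$.
\end{itemize}
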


\subsection{Bounds controlled by essential dimension}

Define $\Cd:\mathbb Z_{\geq0}\to\RR_{\geq0}$ by
\begin{equation}
\label{eq:def-C-dimension}
\Cd(d)=
\begin{cases}
0,&d=0,\\
\dfrac{d^2-4d+10}{3},&1\leq d\leq18,\\[5pt]
\dfrac{d^2}{4}+\dfrac d4+\dfrac32,&d\geq19.
\end{cases}
\end{equation}
The second main theorem bounds the gap by the essential dimension alone.
\begin{theorem}[Essential-dimension bound]
\label{thm:essential-dimension-main}
For every real vector space $\set{U}$ and every
$\tens{T}\in\Sym^3(\set{U})$,
\[
\DeltaT(\tens{T})\leq \Cd\bigl(d(\tens{T})\bigr).
\]
\end{theorem}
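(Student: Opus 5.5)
The plan is to deduce \cref{thm:essential-dimension-main} from two dimension-controlled propositions, proved later in \cref{sec:proof-dimension}, followed by an elementary numerical comparison. First reduce to the concise case. By \cref{lem:concise-reduction} we may replace $\set{U}$ by $\Ess(\tens{T})$, so $d=d(\tens{T})=\dim\set{U}$. The case $d=0$ is trivial since $\tens{T}=\tens{0}$. For $d=1$ the tensor is a single cube, so $\DeltaT=0\le\Cd(1)=7/3$.

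\emph{First ingredient: the polar estimate.} For $d\ge1$ we aim to show
\[
\DeltaT(\tens{T})\le\Mthree(d-1)-d+1 .
\]
Fix a minimum CP decomposition. Using \cref{lem:minimum-cpd}, the first factors $\vect{a}_i$ span $\set{U}$, so one can choose a hyperplane $\ker\alpha$ that contains at least $d-1$ independent factor columns. We pick $\alpha$ so that $\alpha(\vect{a}_k)\neq0$ for the remaining columns and the polar $\alpha\mathbin{\lrcorner}\tens{T}$ is controlled. Next choose $\vect{\ell}$ with $\alpha(\vect{\ell})\ne0$. We aim to write $\tens{T}$ as (i) a combination of cubes $(\vect{\ell}+\vect{h})^3$ transverse to $\ker\alpha$, which reproduce the quadratic polar, plus (ii) a residual cubic in $\Sym^3(\ker\alpha)$. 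The residual has at most $\Mthree(d-1)$ terms. The transverse completion uses exactly as many cubes as the number of CP terms off the hyperplane, so the total is at most $\Mthree(d-1)+r(\tens{T})-(d-1)$. Subtracting $r(\tens{T})$ gives the displayed bound. When the polar has rank one, \cref{lem:transverse-rank-one-deletion} handles the situation directly.

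\emph{Second ingredient: the shifted maximum-rank estimate.} We show $\Mthree(m)\le\Vthree(m)=2\gthree(m-1)+1$ by a transverse polar degeneration. Write a general real cubic as $\lambda\vect{\ell}^3+\vect{\ell}^2 L+\vect{\ell}Q+C$ with $L,Q,C$ forms on a hyperplane. Absorbing $\vect{\ell}^2L+\vect{\ell}Q$ into cubes $(\vect{\ell}+\cdot)^3$ leaves a cubic in $m-1$ variables. For that cubic we apply the Blekherman--Teitler bound $2\gthree(m-1)$ after a regular local completion: a local argument near a generic point lets the single extra cube $\vect{\ell}^3$ absorb the remaining transverse data.

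\emph{Numerical comparison.} Combining the two ingredients gives
\[
\DeltaT\le2\gthree(d-2)-d+2 .
\]
For $d-2\ne5$ we have $\gthree(d-2)=\lceil d(d-1)/6\rceil$. Since $d(d-1)\equiv0$ or $2\pmod 6$, the ceiling is at most $d(d-1)/6+2/3$, which yields exactly $(d^2-4d+10)/3$.

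For $d\ge19$ the quadratic coefficient $1/3$ is beaten by $1/4$. Here I would use the rank-sensitive form of the polar estimate: the number of terms off the hyperplane is $r-(d-1)$, not merely a constant. I would split according to whether $r$ is large or small relative to $d$. When $e$ is small, the excess bound of \cref{thm:rank-excess-main} (through \cref{prop:linear-excess-main}) can be used instead. Balancing the two regimes should produce $d^2/4+d/4+3/2$, which is then checked to be smaller than $(d^2-4d+10)/3$ exactly when $d\ge19$.

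\emph{Main obstacles.} I expect two difficulties. The first is the exceptional value $\gthree(5)=8$, arising at $d=7$, where the naive combination gives $11>31/3$. There I would first try the sharper internal value $\Mthree(6)$ or an extra displayed-cube deletion via \cref{lem:displayed-cube-deletion}. The second is the regular local completion in the shifted estimate, that is, showing that a single transverse cube suffices rather than a full second copy of the generic rank.
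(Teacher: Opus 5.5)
Your skeleton for $1\le d\le18$ matches the paper's: a hyperplane polar estimate, then the shifted maximum-rank bound, then the congruence $d(d-1)\equiv0,2\pmod 6$. However, both ingredients lack their decisive step, $d=7$ is left open, and the $d\ge19$ branch fails as proposed.

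\emph{The polar estimate.} A hyperplane through $d-1$ columns is not enough. Completing $\tens Q=\alpha\mathbin{\lrcorner}\tens T$ with $h\le r-(d-1)$ cubes requires $\alpha^2\mathbin{\lrcorner}\tens T\ne\vect 0$, i.e.\ $\Ess(\tens Q)\not\subseteq\ker\alpha$; this is also a hypothesis of \cref{lem:transverse-rank-one-deletion}. Otherwise \cref{lem:polar-completion} costs $h+2$ cubes. The slack in $(d^2-4d+10)/3$ over $\Vthree(d-1)-d+1$ is at most $4/3$, and zero when $d(d-1)\equiv2\pmod 6$, so the extra $2$ cannot be absorbed. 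The paper secures transversality in \cref{lem:transverse-column-hyperplane}. When every dual-basis functional $\epsilon_i$ is tangent, it takes an extra column $\vect a_{d+1}=\sum_i c_i\vect a_i$ and uses $\epsilon_p+\epsilon_q$ or $c_q\epsilon_p-c_p\epsilon_q$. This functional annihilates $d-1$ columns and has nonzero second contraction.

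\emph{The shifted estimate.} Your mechanism cannot reach $2\gthree(m-1)+1$. Transverse completion of the polar costs $h$ cubes, up to $m$. Blekherman--Teitler on the residual then gives only $\Mthree(m)\le m+2\gthree(m-1)$, and no local argument lets a single cube absorb the rest. The paper's argument, with $g=\gthree(m-1)$, runs as follows:
\begin{enumerate}[label=\textup{(\alph*)}]
\item Take $\alpha$ with $\tens T(\alpha,\alpha,\alpha)\ne0$.
\item Deform a $\tens B$-orthogonal diagonalization of the polar by the Cayley transform of \cref{lem:cayley-deformation}, so that $q=h-1$ of the transverse cubes become nearly tangent to $\ker\alpha$.
\item Observe that $\varepsilon\tens R_\varepsilon\to-\tens P$, where $\tens P$ is a sum of $q$ independent cubes.
\item Use the submersion of the cube-sum map at a regular $(2g-q)$-tuple extending $(-\vect p_i)$ (\cref{lem:regular-cube-completion}, which rests on Alexander--Hirschowitz rather than Blekherman--Teitler). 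This gives $s(\tens R_\varepsilon)\le 2g-q$, hence $s(\tens T)\le h+2g-q=2g+1$.
\end{enumerate}

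\emph{The case $d=7$.} You correctly see that $\gthree(5)=8$ gives $11>31/3$. The repair needs the separate estimate $\Mthree(6)\le16$ of \cref{cor:six-variable-maximum}. That rests on \cref{lem:singular-transverse-six}: every nonzero cubic in six variables has a transverse polar of rank at most five, proved via a Stiefel--Whitney obstruction on $\mathbb{RP}^5$ together with Bertini and B\'ezout arguments. Displayed-cube deletion does nothing in the cube-free case.

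\emph{The case $d\ge19$.} The polar bound, the excess bound $9e-3$ and $s\le\Vthree(d)$ together yield only $\max_e\min\{\Vthree(d-1)-d+1,\,9e-3,\,\Vthree(d)-d-e\}\approx 3d^2/10$. Already at $d=19$ this is $\max_e\min\{97,\,9e-3,\,110-e\}=97>96.5$. Rank sensitivity of the polar estimate does not help, since $r$ cancels in $s-r$. The missing ingredient is the elementary bound $s(\tens T)\le4r(\tens T)$ from \eqref{eq:four-cube}. It gives $\DeltaT(\tens T)\le\tfrac34 s(\tens T)\le\lfloor 3\Vthree(d)/4\rfloor$, and the residue table modulo $12$ bounds this by $d^2/4+d/4+3/2$ (at $d=19$ it equals $96$).
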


The two branches in \eqref{eq:def-C-dimension} are supplied by the distinct estimates below.
The first dimension estimate combines a transverse polar with a
hyperplane containing $d-1$ factor columns.

\begin{proposition}[Maximum-Waring-rank bound]
\label{prop:maximum-waring-gap}
Let $\set{U}$ be a real vector space and let
$\tens{T}\in\Sym^3(\set{U})$ be nonzero.  Put
$d=d(\tens{T})$.  Then
\begin{equation}
\label{eq:uniform-polar-bound}
\DeltaT(\tens{T})
\leq\Mthree(d-1)-d+1
\leq\frac{d^2-4d+10}{3}.
\end{equation}
\end{proposition}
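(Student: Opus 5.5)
The plan is to prove the first inequality in \eqref{eq:uniform-polar-bound} by peeling off one direction with a polar and completing it by transverse cubes. The second inequality is then a numerical consequence of the shifted maximum-rank estimate.

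\emph{Setup.} By \cref{lem:concise-reduction} we may assume $\tens{T}$ is concise in $\set{V}=\Ess(\tens{T})$, with $\dim\set{V}=d$. Fix a minimum CP decomposition $\tens{T}=\sum_{i=1}^{r}\vect{a}_i\otimes\vect{b}_i\otimes\vect{c}_i$, where $r=r(\tens{T})$. By \cref{lem:minimum-cpd} the first factors $\vect{a}_i$ span $\set{V}$. So we can choose $d-1$ linearly independent columns among them, spanning a hyperplane $\set{H}$, and a functional $\alpha$ with $\ker\alpha=\set{H}$. Contracting in the first mode gives
\[
\alpha\mathbin{\lrcorner}\tens{T}=\sum_{i:\alpha(\vect{a}_i)\neq0}\alpha(\vect{a}_i)\,\vect{b}_i\otimes\vect{c}_i .
\]
This is a symmetric quadratic $\tens{Q}$ whose matrix rank is $k\le r-d+1$. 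Moreover $\tens{Q}\neq0$, since otherwise $\tens{T}\in\Sym^3(\set{H})$, contradicting conciseness.

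\emph{Transverse completion.} The aim is to find real linear forms $\vect{\ell}_1,\dots,\vect{\ell}_k$ and scalars $\lambda_j$ such that $\alpha(\vect{\ell}_j)\neq0$ and
\[
\sum_j\lambda_j\alpha(\vect{\ell}_j)\vect{\ell}_j^2=\tens{Q}.
\]
Then $\tens{R}=\tens{T}-\sum_j\lambda_j\vect{\ell}_j^3$ satisfies $\alpha\mathbin{\lrcorner}\tens{R}=0$. By symmetry and \cref{lem:contraction-essential-support}, $\tens{R}\in\Sym^3(\set{H})$. Hence, using \cref{lem:concise-reduction},
\[
s(\tens{T})\le k+s(\tens{R})\le r-d+1+\Mthree(d-1),
\]
which is the first inequality.

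To produce the $\vect{\ell}_j$, I would diagonalize $\tens{Q}$ over $\RR$ as $\sum_{j=1}^{k}\mu_j\vect{m}_j^2$ and then perturb the basis. If $\Ess(\tens{Q})\not\subseteq\set{H}$, then $\alpha$ restricted to $\Ess(\tens{Q})$ is nonzero. The diagonalizing bases of a nondegenerate real form on $\Ess(\tens{Q})$ form an open family: compose with elements of the indefinite orthogonal group $O(p,q)$. A generic member avoids the hyperplane $\ker\alpha\cap\Ess(\tens{Q})$ in every vector, so we can take $\vect{\ell}_j=\vect{m}_j$.

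\emph{Main obstacle.} The hard case is $\Ess(\tens{Q})\subseteq\set{H}$. There the naive completion costs $k+1$ cubes (as in \cref{lem:transverse-rank-one-deletion} when $k=1$), which loses one unit. I would first try to avoid this case by the choice of hyperplane. The $\vect{a}_i$ span $\set{V}$, so we can vary which $d-1$ columns span $\set{H}$, and equivalently perturb $\alpha$. If the condition $\Ess(\alpha\mathbin{\lrcorner}\tens{T})\subseteq\ker\alpha$ held for every admissible $\alpha$, then, since it is a closed algebraic condition, the polynomial identity
\[
\tens{T}(\alpha,\alpha,\alpha)=\alpha\bigl(\alpha\mathbin{\lrcorner}\tens{T}\bigr)(\alpha)=0
\]
would hold on a Zariski-dense set of $\alpha$. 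Hence $\tens{T}=0$, a contradiction.

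The delicate point is that admissible $\alpha$ are constrained: they must kill $d-1$ independent columns, so there are only finitely many. So we need a rank count that still gives $k\le r-d+1$ for a slightly tilted $\alpha$. For example, tilt by $\alpha+\varepsilon\beta$ when the extra columns are few, and otherwise absorb one surplus column into the count $r-d+1$. I expect this step to need the most care.

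\emph{Numerical bound.} The second inequality follows from $\Mthree(d-1)\le\Vthree(d-1)=2\gthree(d-2)+1$, proved in \cref{prop:shifted-maximum-rank}, together with \eqref{eq:generic-cubic-rank}. For $d-2\neq5$ we have
\[
2\lceil d(d-1)/6\rceil\le (d^2-d+4)/3,
\]
and subtracting $d-1$ gives exactly $(d^2-4d+10)/3$. The small cases $d\le2$ are checked directly. The exceptional case $d=7$ needs separate care: $2\cdot8+1-6=11$ exceeds $31/3$, so here I would need a sharper bound $\Mthree(6)\le16$.
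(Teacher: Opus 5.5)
Your overall architecture is the paper's. You reduce to a concise tensor and choose $\alpha$ whose kernel contains $d-1$ first-factor columns, so that $k=\operatorname{rank}(\alpha\mathbin{\lrcorner}\tens{T})\le r-d+1$. You then complete the polar transversally and bound $\Mthree(d-1)$ by $\Vthree(d-1)$. You also correctly see that $d=7$ needs $\Mthree(6)\le16$, which the paper supplies in \cref{cor:six-variable-maximum}.

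However, the step you flag as the main obstacle is the real content of the first inequality, and it is left open. You never produce a functional $\alpha$ that both annihilates $d-1$ columns and is transverse, i.e.\ satisfies $\alpha^2\mathbin{\lrcorner}\tens{T}\ne\vect{0}$ (equivalently $\Ess(\alpha\mathbin{\lrcorner}\tens{T})\not\subseteq\ker\alpha$). As you note, density arguments are unavailable on the finitely many hyperplanes spanned by independent columns. Tilting to $\alpha+\varepsilon\beta$ releases columns from the kernel, so the count $k\le r-d+1$ is lost.

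Without transversality you only have the non-transverse branch of \cref{lem:polar-completion}. That branch costs $k+2$ cubes, not $k+1$: when $\Ess(\tens{Q})\subseteq\ker\alpha$, no $k+1$ squares of vectors outside $\ker\alpha$ can represent $\tens{Q}$. It therefore gives only $\DeltaT(\tens{T})\le\Mthree(d-1)-d+3$. You also do not treat $r=d$, where no surplus column exists. In that case the alternating identities make every term a cube, so $\DeltaT(\tens{T})=0$.

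The missing idea is \cref{lem:transverse-column-hyperplane}. The rank count only needs $\ker\alpha$ to contain $d-1$ columns counted with multiplicity; it need not be spanned by $d-1$ independent ones. For $r>d$, proceed as follows.
\begin{enumerate}
\item Take basis columns $\vect{a}_1,\dots,\vect{a}_d$, an extra column $\vect{a}_{d+1}=\sum_i c_i\vect{a}_i$, and the dual basis $\epsilon_i$.
\item Either some $\epsilon_i$ is already transverse, or $\tens{T}(\epsilon_i,\epsilon_i,\cdot)=\vect{0}$ for all $i$. In the latter case every coefficient with a repeated index vanishes, so $\tens{T}(\epsilon_i,\epsilon_j,\epsilon_k)\ne0$ for some distinct $i,j,k$.
\item By pigeonhole, two of $c_i,c_j,c_k$, say $c_p,c_q$, are both zero or both nonzero. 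Take $\alpha=\epsilon_p+\epsilon_q$ in the first case and $\alpha=c_q\epsilon_p-c_p\epsilon_q$ in the second.
\item This $\alpha$ kills $\vect{a}_{d+1}$ and the $d-2$ basis columns other than $\vect{a}_p,\vect{a}_q$. Moreover, $\tens{T}(\alpha,\alpha,\cdot)$ is a nonzero multiple of $\tens{T}(\epsilon_p,\epsilon_q,\cdot)\ne\vect{0}$.
\end{enumerate}
When $c_p=c_q=0$, $\ker\alpha$ need not be spanned by columns at all, so your admissible set is genuinely too small. With this lemma the rest of your plan goes through as in the paper. One bookkeeping correction: the resulting bound is $2\lceil d(d-1)/6\rceil+2-d$, with the extra $+1$ coming from $\Vthree$.
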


For the second estimate, put $\Cdsharp(0)=0$ and, for $m\ge1$, set
\begin{equation*}
\Cdsharp(m)=
\min\left\{
\Vthree(m-1)-m+1,\,
\left\lfloor\frac{3\Vthree(m)}4\right\rfloor
\right\}.
\end{equation*}

The second proposition compares the maximum-rank and polarization branches.
\begin{proposition}[Quadratic dimension bound]
\label{prop:dimension-quadratic}
Let $\set{U}$ be a real vector space of dimension $n\geq1$, and let
$\tens{T}\in\Sym^3(\set{U})$.  Put $d=d(\tens{T})$.  Then
\[
\DeltaT(\tens{T})\leq \Cdsharp(d)
\leq\frac{d^2}{4}+\frac d4+\frac32
\leq\frac{n^2}{4}+\frac n4+\frac32.
\]
\end{proposition}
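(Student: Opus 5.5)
We must prove $\DeltaT(\tens{T})\le\Cdsharp(d)$ and then the two numerical comparisons. First, by \cref{lem:concise-reduction} we replace $\set{U}$ by $\Ess(\tens{T})$, so that $\tens{T}$ is concise in dimension $d\le n$. We may assume $d\ge1$, since $d=0$ gives $\tens{T}=\tens{0}$ and gap $0$.

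The bound $\DeltaT(\tens{T})\le\Cdsharp(d)$ consists of two inequalities, one per entry of the minimum.

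\emph{First entry.} \cref{prop:maximum-waring-gap} gives $\DeltaT(\tens{T})\le\Mthree(d-1)-d+1$. Combining this with the shifted bound $\Mthree(d-1)\le\Vthree(d-1)$ from \cref{prop:shifted-maximum-rank} yields $\DeltaT(\tens{T})\le\Vthree(d-1)-d+1$.

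\emph{Second entry.} This is the polarization branch. Take a minimum CP decomposition $\tens{T}=\sum_{i=1}^r\vect{a}_i\otimes\vect{b}_i\otimes\vect{c}_i$. Symmetrizing gives $\tens{T}=\sum_i\vect{a}_i\vect{b}_i\vect{c}_i$, and by \eqref{eq:four-cube} we get $s(\tens{T})\le4r$, so $\DeltaT(\tens{T})\le3r(\tens{T})$. We also have $\DeltaT(\tens{T})\le s(\tens{T})-r(\tens{T})\le\Mthree(d)-r(\tens{T})$. Hence
\[
\DeltaT(\tens{T})\le\min\{3r,\ \Mthree(d)-r\}\le\tfrac34\Mthree(d)\le\tfrac34\Vthree(d),
\]
where the middle inequality holds because the minimum of these two expressions over $r$ is maximized at $r=\Mthree(d)/4$. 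Since $\DeltaT(\tens{T})$ is an integer, we may take the floor.

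It remains to show $\Cdsharp(d)\le d^2/4+d/4+3/2$, which is elementary arithmetic with \eqref{eq:generic-cubic-rank}. For large $d$ we use the second entry. Writing $\gthree(d)\le\bigl((d+1)(d+2)+4\bigr)/6$ for $d\ne5$, we get
\[
\Vthree(d)\le\tfrac{d(d+1)+4}{3}+1,
\qquad\text{so}\qquad
\tfrac34\Vthree(d)\le\tfrac{d^2+d}{4}+\tfrac74.
\]
The floor, together with a case check on $d\bmod 3$ using the exact ceiling, should bring this down to at most $d^2/4+d/4+3/2$. The exceptional value $\gthree(5)=8$ enters only through $\Vthree(6)$, and we would check $d=6$ directly, where the first entry may be needed. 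For small $d$ we use the first entry: $\Vthree(d-1)-d+1=2\gthree(d-2)+2-d$ for $d\ge2$, which grows roughly like $d^2/3$. A finite table for $d\le 18$ or so, together with $\Cdsharp(1)=0$ (from $\Vthree(0)-1+1=0$), shows the minimum is always within the bound. The final inequality in terms of $n$ follows from monotonicity of $x^2/4+x/4$ on $x\ge0$ and $d\le n$.

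The main obstacle is the arithmetic in the second entry: the crude estimate leaves a slack of $1/4$. It must be absorbed case by case via the exact ceiling in $\gthree$ and the floor in $\lfloor3\Vthree(d)/4\rfloor$, with residues of $d$ modulo $3$ and $4$ interacting. I would first tabulate $\Vthree(d)$ for small $d$ to see which residue classes are tight, and fall back on the first entry wherever the second fails.
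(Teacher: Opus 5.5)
Your proposal is correct and follows the paper's proof: the first entry of $\Cdsharp(d)$ comes from \cref{prop:maximum-waring-gap} combined with \cref{prop:shifted-maximum-rank}, the second from $s(\tens{T})\le4r(\tens{T})$ together with $s(\tens{T})\le\Mthree(d)\le\Vthree(d)$, and the final comparison is residue arithmetic, which the paper records through its $\kappa_{d\bmod12}$ table. The arithmetic you left open closes at once without the first entry: for $d\ne6$ your bound $3\Vthree(d)\le d^2+d+7$ has an odd right-hand side, so $\lfloor3\Vthree(d)/4\rfloor\le(d^2+d+6)/4$, while for $d=6$ one has $\lfloor3\cdot17/4\rfloor=12=(36+6+6)/4$.
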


\subsection{Combined form}

Taking the minimum of the two main bounds gives the following estimate;
the standard bounds are
$\DeltaT(\tens{T})\le3r(\tens{T})$ and
$\DeltaT(\tens{T})\le\Mthree(d(\tens{T}))-r(\tens{T})$.

\begin{corollary}[Joint excess--dimension estimate]
\label{cor:unified-bound}
Let $\set{U}$ be a real vector space and let
$\tens{T}\in\Sym^3(\set{U})$ be nonzero.  Then
\[
\DeltaT(\tens{T})
\leq\min\left\{
\Ce\bigl(e(\tens{T})\bigr),
\Cd\bigl(d(\tens{T})\bigr)
\right\}.
\]
\end{corollary}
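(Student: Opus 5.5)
The corollary follows by combining the two main theorems; no new argument is needed. Fix a nonzero $\tens{T}\in\Sym^3(\set{U})$. \Cref{thm:rank-excess-main} gives $\DeltaT(\tens{T})\leq\Ce\bigl(e(\tens{T})\bigr)$, and \cref{thm:essential-dimension-main} gives $\DeltaT(\tens{T})\leq\Cd\bigl(d(\tens{T})\bigr)$. Both hold simultaneously for the same tensor, so $\DeltaT(\tens{T})$ is bounded by their minimum. The nonzero hypothesis is not actually needed for this inequality. It only ensures $d(\tens{T})\geq1$, so that the relevant branch of $\Cd$ is the nontrivial one and $e(\tens{T})$ is a meaningful nonnegative integer.

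Two checks remain. First, $e(\tens{T})\in\mathbb Z_{\geq0}$ and $d(\tens{T})\in\mathbb Z_{\geq0}$, which is what the domains of $\Ce$ and $\Cd$ require. Nonnegativity of $e(\tens{T})$ is exactly the inequality $d(\tens{T})\leq r(\tens{T})$ of \cref{lem:contraction-essential-support}. Second, each theorem applies to every real vector space $\set{U}$ with no extra hypotheses. By \cref{lem:concise-reduction} nothing depends on the ambient space, so both bounds are evaluated on the same intrinsic quantities.

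The only obstacle lies in the two theorems themselves. \Cref{thm:rank-excess-main} reduces to \cref{prop:linear-excess-main} and \cref{prop:excess-three-zero}; the latter applies for $e\le3$ and the former for $e\ge4$, where $\max\{0,9e-3\}=9e-3$. \Cref{thm:essential-dimension-main} reduces to \cref{prop:maximum-waring-gap} for $1\le d\le18$ and to \cref{prop:dimension-quadratic} for $d\ge19$. Given those results as stated, the corollary itself is immediate.
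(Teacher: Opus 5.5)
Your proposal is correct and is the paper's proof: the corollary follows by taking the minimum of \cref{thm:rank-excess-main} and \cref{thm:essential-dimension-main}, both of which hold for every $\tens{T}\in\Sym^3(\set{U})$. Your side remark that the nonzero hypothesis is inessential is also right, since for $\tens{T}=\tens{0}$ all quantities vanish and $\Ce(0)=\Cd(0)=0$; for the same reason $e(\tens{0})=0$ is already meaningful, so that hypothesis is not needed for that either.
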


\begin{proof}
Take the minimum of
\cref{thm:rank-excess-main,thm:essential-dimension-main}.
\end{proof}

\section{The general rank-excess bound}
\label{sec:proof-excess-general}

This section proves \cref{prop:linear-excess-main}.  We first record a
cube-free residual estimate and then derive the claimed uniform bound.

\subsection{The general linear bound}

\Cref{lem:displayed-cube-deletion} shows that deleting all
displayed cubes from a minimum CP decomposition produces a symmetric,
cube-free residual $\tens{R}$ with a minimum displayed decomposition, while
\[
e(\tens{R})\le e(\tens{T}),
\ \ 
\DeltaT(\tens{T})\le\DeltaT(\tens{R}).
\]
It therefore remains to prove the following residual statement.

\begin{proposition}[Cube-free residual bound]
\label{prop:cube-free-residual}
Let $\tens{R}$ be a nonzero real symmetric cubic tensor, set
$\set{V}=\Ess(\tens{R})$, and suppose that
\begin{equation}
\label{eq:cube-free-cpd}
\tens{R}=\sum_{i=1}^{r(\tens{R})}\vect{a}_i\otimes \vect{b}_i\otimes \vect{c}_i
\in\Sym^3(\set{V})
\end{equation}
is a minimum cube-free CP decomposition.  Then $e(\tens{R})\ge1$ and
\[
\DeltaT(\tens{R})\le9e(\tens{R})-3.
\]
\end{proposition}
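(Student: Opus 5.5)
The plan is to reduce the estimate to a bound on the size of the cube-free residual and then apply polarization term by term. Write $r=r(\tens{R})$, $d=d(\tens{R})$, $e=e(\tens{R})=r-d$. Since $\tens{R}$ is symmetric, it equals the symmetrization of its own decomposition:
\[
\tens{R}=\sum_{i=1}^{r}\sym(\vect{a}_i\otimes\vect{b}_i\otimes\vect{c}_i).
\]
By \eqref{eq:four-cube} each summand costs at most four cubes, and by \eqref{eq:three-cube} only three when two factors are proportional. Hence $s(\tens{R})\le 4r$, and so $\DeltaT(\tens{R})\le 3r$. The target $9e-3$ would therefore follow from the \emph{size estimate} $r\le 3e-1$, equivalently $d\le 2e-1$. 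If the size estimate only comes out slightly weaker, I would recover the lost constant from the cheaper three-cube terms.

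\textbf{Step 1: the alternating equations.} Symmetry of $\tens{R}$ under the transposition of the first two factors gives
\[
\sum_i(\vect{a}_i\wedge\vect{b}_i)\otimes\vect{c}_i=0
\quad\text{in }\Lambda^2\set{V}\otimes\set{V},
\]
and similarly for the other two pairs of slots. By \cref{lem:minimum-cpd}, the $\vect{c}_i$ span $\set{V}$. So I choose $d$ indices $J$ with $(\vect{c}_j)_{j\in J}$ a basis, and call the remaining $e$ indices $K$ the \emph{extra} indices. Expanding each $\vect{c}_k$ ($k\in K$) in this basis and reading off coefficients gives, for every $j\in J$,
\[
\vect{a}_j\wedge\vect{b}_j\in\Span\{\vect{a}_k\wedge\vect{b}_k:k\in K\}\subseteq\Lambda^2\set{W},
\qquad
\set{W}=\Span\{\vect{a}_k,\vect{b}_k:k\in K\}.
\]
Here $\dim\set{W}\le 2e$. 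A decomposable bivector lying in $\Lambda^2\set{W}$ is either zero or has both factors in $\set{W}$. So every $j\in J$ has either $\vect{a}_j\parallel\vect{b}_j$ or $\vect{a}_j,\vect{b}_j\in\set{W}$.

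This already gives $e\ge1$. If $e=0$, every $\vect{a}_j\wedge\vect{b}_j=0$, and the same argument for the other slot pairs makes every term a displayed cube. That contradicts cube-freeness and $\tens{R}\ne0$.

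\textbf{Step 2: support geometry.} I repeat Step~1 for the other two slot pairs, choosing the basis indices from the $\vect{a}$- and $\vect{b}$-columns respectively. This produces support spaces $\set{W}_{12},\set{W}_{23},\set{W}_{31}$, each of dimension at most $2e$. Every term is not a cube, so some pair of its factors is non-proportional. I then want to show that the factor columns of every non-extra term lie in a common subspace of dimension at most $2e-1$; since these columns span $\set{V}$, this forces $d\le 2e-1$.

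The mechanism would be as follows. A non-extra term has a non-proportional pair lying in some $\set{W}_{\bullet}$. Its third factor is then tied in by the companion alternating equation, using that the corresponding pair family is linearly independent (\cref{lem:minimum-cpd}). The $-1$ should come from a dimension count in the coefficient relations. The $e$ decomposable extra bivectors cannot have supports spanning a full $2e$-dimensional space while also generating all the $d$ basis bivectors independently enough to keep $(\vect{a}_i\otimes\vect{b}_i)$ linearly independent.

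\textbf{Main obstacle.} Step 2 is the hard part: merging the three supports $\set{W}_{12},\set{W}_{23},\set{W}_{31}$ into a single space of dimension $2e-1$ rather than a crude bound like $6e$ or $3e$. My first attempt will be to fix one basis choice (from the $\vect{c}$-columns) and treat the terms with $\vect{a}_j\parallel\vect{b}_j$ separately. For such a term, cube-freeness forces $\vect{c}_j\not\parallel\vect{a}_j$, and the $(2,3)$-equation places $\vect{b}_j,\vect{c}_j$ in a support built from the same $e$ extra terms. I would then bound the total span by that of the $3e$ extra factor vectors, and cut it to $2e-1$ using the independence statements of \cref{lem:minimum-cpd}.

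If only a weaker size bound survives, the fallback is to split the terms. Those with two proportional factors (the $\vect{a}_j\parallel\vect{b}_j$ type) contribute gap at most $2$ each. The count of the remaining terms would be bounded by a multiple of $e$, and the two contributions combined to give exactly $9e-3$.
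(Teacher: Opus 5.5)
\textbf{There is a genuine gap.} It sits in Step~2, and it is not a missing detail: the size estimate you aim for, $d\le 2e-1$ (equivalently $r\le 3e-1$), is false for minimum cube-free decompositions.

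Take independent $\vect{x},\vect{y}$ and
$\tens{R}=\vect{x}\otimes\vect{x}\otimes\vect{y}+\vect{x}\otimes\vect{y}\otimes\vect{x}+\vect{y}\otimes\vect{x}\otimes\vect{x}$.
This is the cubic $3x^2y$, i.e.\ the $W$-state, of CP rank three. The displayed decomposition is minimum and cube-free, with $d=2$, $r=3$, $e=1$. So $d=2e$ and $r=3e$, and both forms of your target fail.

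Your Step~1 is correct; it is exactly the bound $\ell_{xy}\le e$, $h_{xy}\le 2\ell_{xy}$. But in this example the nonzero $ab$-wedges are all $\pm\vect{x}\wedge\vect{y}$, so $h_{ab}=2=2\ell_{ab}$ and no support dimension can be saved. The heuristic you sketch for the ``$-1$'' fails precisely when the wedge directions are as few as possible. This is not a degenerate corner: it is exactly the situation in which the decomposition can be regrouped into a partial one.

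\textbf{What the fallback is missing.} Your fallback has the right shape: $\DeltaT(\tens{R})\le 2r+|\set{I}_x|$, with $\set{I}_x$ the indices whose three pairwise wedges are all nonzero. But the two ingredients that make it work are absent.

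First, you need a count in which $d$ cancels, because $d$ is not controlled by $2e-1$. An index with $\vect{b}_i\wedge\vect{c}_i\ne\vect{0}$ has $\vect{b}_i\in\set{H}_{bc}$, and the $\vect{b}$-columns span $\set{V}$. Hence at least $d-h_{bc}$ indices have $\vect{b}_i\parallel\vect{c}_i$. Doing this for all three pairs gives
$|\set{I}_x|\le e-2d+h_{ab}+h_{bc}+h_{ca}$, and therefore
$\DeltaT(\tens{R})\le 3e+h_{ab}+h_{bc}+h_{ca}\le 9e$.

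Second, the final $-3$ needs a dichotomy.
\begin{enumerate}
\item Suppose that for some pair the number of distinct wedge directions equals $\ell_{xy}$. Group the terms by direction and diagonalize the resulting binary quadratic blocks; this produces a minimum partial decomposition $\sum_j\vect{u}_j\otimes\vect{v}_j^{\otimes 2}$. After deleting its displayed cubes, the remainder satisfies $d\le 2e$, and every term costs three cubes. Hence $\DeltaT(\tens{R})\le 6e\le 9e-3$. The example above lives here.
\item Otherwise every pair has at least $\ell_{xy}+1$ wedge directions. The Pl\"ucker relation $\vect{p}\wedge\vect{p}=\vect{0}$ shows that if the support hull were a direct sum of $\ell_{xy}$ planes, the only decomposable directions in the wedge span would be the $\ell_{xy}$ basis ones. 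This forces $h_{xy}\le 2\ell_{xy}-1$ for every pair, so $h_{ab}+h_{bc}+h_{ca}\le 6e-3$, and the count above gives $9e-3$.
\end{enumerate}
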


For the remainder of this section, $\tens{R}$, $\set{V}$, and the displayed
decomposition in~\eqref{eq:cube-free-cpd} satisfy the hypotheses of
\cref{prop:cube-free-residual}.  Put
$r=r(\tens{R}),  d=d(\tens{R})=\dim \set{V}$ and $e=e(\tens{R})=r-d.$  
By \cref{lem:minimum-cpd}, each of the three factor lists spans $\set{V}$.
Symmetry under the three transpositions gives
\begin{equation}
\label{eq:alternating-identities}
\sum_i(\vect{a}_i\wedge \vect{b}_i)\otimes \vect{c}_i=\tens{0},\ \
\sum_i(\vect{b}_i\wedge \vect{c}_i)\otimes \vect{a}_i=\tens{0},\ \
\sum_i(\vect{c}_i\wedge \vect{a}_i)\otimes \vect{b}_i=\tens{0}.
\end{equation}

For a nonzero decomposable bivector $\vect{p}=\vect{u}\wedge \vect{v}$, its support is the
two-dimensional subspace
$\operatorname{supp}(\vect{p})=\Span\{\vect{u},\vect{v}\}.$ 
This subspace is independent of the chosen decomposition of $\vect{p}$.
Equivalently, $\operatorname{supp}(\vect{p})$ is the image of the contraction map
$\set{V}^*\to\set{V}$, $\alpha\mapsto\alpha\mathbin{\lrcorner}\vect{p}$;
in particular, $\vect{p}\in\bigwedge^2\set{H}$ implies
$\operatorname{supp}(\vect{p})\subseteq\set{H}$ for every subspace
$\set{H}\subseteq\set{V}$. 
For $xy\in\{ab,bc,ca\}$, define the observed wedge family, its span,
and its support hull by
\[
\begin{aligned}
\set{P}_{ab}&=\{\vect{a}_i\wedge \vect{b}_i:1\le i\le r\},\ \
\set{L}_{ab}=\Span\set{P}_{ab},\ \
\set{H}_{ab}=\Span\{\vect{a}_i,\vect{b}_i:\vect{a}_i\wedge \vect{b}_i\ne\vect{0}\},\\[2pt]
\set{P}_{bc}&=\{\vect{b}_i\wedge \vect{c}_i:1\le i\le r\},\ \ 
\set{L}_{bc}=\Span\set{P}_{bc},\ \
\set{H}_{bc}=\Span\{\vect{b}_i,\vect{c}_i:\vect{b}_i\wedge \vect{c}_i\ne\vect{0}\},\\[2pt]
\set{P}_{ca}&=\{\vect{c}_i\wedge \vect{a}_i:1\le i\le r\},\ \ 
\set{L}_{ca}=\Span\set{P}_{ca},\ \
\set{H}_{ca}=\Span\{\vect{c}_i,\vect{a}_i:\vect{c}_i\wedge \vect{a}_i\ne\vect{0}\}.
\end{aligned}
\]
Write
$\ell_{xy}=\dim \set{L}_{xy}, 
h_{xy}=\dim \set{H}_{xy}.$ 
For a finite family $\set{P}$ of decomposable bivectors, let $\nu(\set{P})$ be the
number of distinct one-dimensional subspaces $\RR\vect{p}$ represented by
nonzero $\vect{p}\in\set{P}$; zero bivectors are ignored.

\subsection{Alternating equations and wedge-support bounds}

The first support estimate converts each alternating symmetry equation
into a matrix-kernel bound and then spans every supporting plane by a
basis of observed bivectors.

\begin{lemma}[Wedge-span and support bounds]
\label{lem:wedge-support-bounds}
For every $xy\in\{ab,bc,ca\}$,
\[
\ell_{xy}\le e,
\ \ 
h_{xy}\le2\ell_{xy}.
\]
\end{lemma}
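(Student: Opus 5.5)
The plan is to read each alternating identity in \eqref{eq:alternating-identities} as a matrix product and apply a rank--nullity count for the first inequality, and then to use the basic property of supports of decomposable bivectors for the second. By the cyclic symmetry of the three identities, it suffices to treat $xy=ab$, using the first identity $\sum_i(\vect{a}_i\wedge\vect{b}_i)\otimes\vect{c}_i=\tens{0}$. The other two cases follow by relabelling $(\vect{a},\vect{b},\vect{c})$ cyclically, pairing $\vect{b}_i\wedge\vect{c}_i$ with $\vect{a}_i$ and $\vect{c}_i\wedge\vect{a}_i$ with $\vect{b}_i$.

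For $\ell_{ab}\le e$, I introduce two linear maps out of $\RR^r$ with standard basis $(\vect{\epsilon}_i)$:
\[
\matr{P}:\RR^r\to{\textstyle\bigwedge^2}\set{V},\ \vect{\epsilon}_i\mapsto\vect{a}_i\wedge\vect{b}_i,
\qquad
\matr{C}:\RR^r\to\set{V},\ \vect{\epsilon}_i\mapsto\vect{c}_i .
\]
The identity $\sum_i(\vect{a}_i\wedge\vect{b}_i)\otimes\vect{c}_i=\tens{0}$ says exactly that $\matr{P}\matr{C}^{\mathsf T}=\matr{0}$ as a map $\set{V}^*\to\bigwedge^2\set{V}$; concretely, contracting with any $\gamma\in\set{V}^*$ in the last factor gives $\matr{P}\bigl((\gamma(\vect{c}_i))_i\bigr)=\vect{0}$. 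Hence $\operatorname{im}\matr{C}^{\mathsf T}\subseteq\ker\matr{P}$. By \cref{lem:minimum-cpd}, the vectors $\vect{c}_i$ span $\set{V}$, so $\rank\matr{C}^{\mathsf T}=\rank\matr{C}=d$. Rank--nullity then gives
\[
\ell_{ab}=\dim\set{L}_{ab}=\rank\matr{P}\le r-d=e .
\]

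For $h_{ab}\le2\ell_{ab}$, I extract from the spanning family $\set{P}_{ab}$ a basis $\vect{p}_1,\dots,\vect{p}_{\ell}$ of $\set{L}_{ab}$ consisting of nonzero observed bivectors, with $\ell=\ell_{ab}$. Let $\set{S}=\sum_{j}\operatorname{supp}(\vect{p}_j)$, so that $\dim\set{S}\le2\ell$. Each $\vect{p}_j$ lies in $\bigwedge^2\set{S}$, so every element of $\set{L}_{ab}$ lies in $\bigwedge^2\set{S}$. By the support property recorded just before the lemma, any nonzero $\vect{a}_i\wedge\vect{b}_i$ then has $\operatorname{supp}(\vect{a}_i\wedge\vect{b}_i)=\Span\{\vect{a}_i,\vect{b}_i\}\subseteq\set{S}$. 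Therefore $\set{H}_{ab}\subseteq\set{S}$, and $h_{ab}\le2\ell_{ab}$. If $\ell_{ab}=0$, both sides vanish and the inequality holds trivially.

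There is no serious obstacle here. The only point needing care is that the first inequality must use the spanning property of the \emph{third} factor list, the one paired with the bivectors, and this is supplied by \cref{lem:minimum-cpd} for each of the three lists.
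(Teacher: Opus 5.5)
Your proposal is correct and follows essentially the same route as the paper. The paper encodes the first alternating identity as $\matr{C}\matr{W}^{\mathsf T}=\matr{0}$ and bounds $\rank\matr{W}$ by $\dim\ker\matr{C}=e$, which is the transpose of your inclusion $\operatorname{im}\matr{C}^{\mathsf T}\subseteq\ker\matr{P}$. It obtains $h_{ab}\le2\ell_{ab}$ exactly as you do, by summing the support planes of an observed basis of $\set{L}_{ab}$.
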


\begin{proof}
Consider the pair $ab$.  Choose bases of $\set{V}$ and $\bigwedge^2\set{V}$, let
$\matr{C}=(\vect{c}_1\ \cdots\ \vect{c}_r)$, and let $\matr{W}$ be the matrix whose $i$th column
is the coordinate vector of $\vect{a}_i\wedge \vect{b}_i$.  By the first identity
in~\eqref{eq:alternating-identities}, we have
$\matr{C}\matr{W}^{\mathsf T}=\matr{0}$.  Since the $\vect{c}_i$ span
$\set{V}$, we have $\dim\ker\matr{C}=r-d=e$; consequently,
$$\ell_{ab}=\rank\matr{W}\le\dim\ker\matr{C}=e.$$  
Choose $\ell_{ab}$ observed nonzero bivectors forming a basis of
$\set{L}_{ab}$, and let $\set{H}_0$ be the sum of their support planes.  Then
$\set{L}_{ab}\subseteq\bigwedge^2\set{H}_0$ and
$\dim\set{H}_0\le2\ell_{ab}$.
Every observed bivector belongs to $\bigwedge^2\set{H}_0$, so its support is
contained in $\set{H}_0$.  Thus $\set{H}_{ab}=\set{H}_0$ and
$h_{ab}\le2\ell_{ab}$.  The other two pairs are identical.
\end{proof}

\subsection{Bivector support saving and direction-group partialization}

The next lemma is the exterior-algebra saving in the non-partial branch.
It uses the decomposability identity $\vect{p}\wedge\vect{p}=\vect{0}$ to rule out
full two-dimensional support for each independent bivector direction.

\begin{lemma}[Bivector support saving]
\label{lem:plucker-support-saving}
Let
$\set{P}=\{\vect{p}_\alpha=\vect{a}_\alpha\wedge \vect{b}_\alpha:\alpha\in\set{I}\}
\subseteq\bigwedge^2\set{V}$ 
be a family of nonzero decomposable bivectors.  Put
$\set{L}=\Span\set{P},
m=\dim \set{L}$ and 
$\set{H}(\set{P})=\Span\{\vect{a}_\alpha,\vect{b}_\alpha:\alpha\in\set{I}\}.$ 
If $\nu(\set{P})\ge m+1$, then
\[
\dim \set{H}(\set{P})\le2m-1.
\]
\end{lemma}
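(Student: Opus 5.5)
The plan is to reduce to a basis of observed bivectors, exactly as in the proof of \cref{lem:wedge-support-bounds}, and then show that the extremal configuration $\dim\set{H}(\set{P})=2m$ cannot hold when there is an extra bivector direction. The contradiction will come from the decomposability identity $\vect{p}\wedge\vect{p}=\vect{0}$.

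\emph{Reduction to a basis.} Since $\nu(\set{P})\ge m+1\ge1$, we have $m\ge1$. Choose $\vect{p}_1,\dots,\vect{p}_m\in\set{P}$ forming a basis of $\set{L}$, and let $\set{S}_j=\operatorname{supp}(\vect{p}_j)$ and $\set{H}_0=\set{S}_1+\cdots+\set{S}_m$. Then $\set{L}\subseteq\bigwedge^2\set{H}_0$. Every $\vect{p}_\alpha\in\set{P}$ therefore lies in $\bigwedge^2\set{H}_0$, so its support, which is $\Span\{\vect{a}_\alpha,\vect{b}_\alpha\}$, is contained in $\set{H}_0$. The reverse inclusion is trivial, so $\set{H}(\set{P})=\set{H}_0$ and $\dim\set{H}(\set{P})\le2m$. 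It remains to exclude equality.

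\emph{Excluding $\dim\set{H}_0=2m$.} Suppose $\dim\set{H}_0=2m$. Then $\set{H}_0=\set{S}_1\oplus\cdots\oplus\set{S}_m$ is a direct sum of planes. Pick a basis $\vect{u}_j,\vect{v}_j$ of each $\set{S}_j$ with $\vect{p}_j=\vect{u}_j\wedge\vect{v}_j$; together these vectors form a basis of $\set{H}_0$. Consequently the $4$-vectors $\vect{p}_j\wedge\vect{p}_k$ for $j<k$ are distinct elements of the standard basis of $\bigwedge^4\set{H}_0$ attached to this basis, and hence are linearly independent.

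Because $\nu(\set{P})\ge m+1$, some $\vect{p}\in\set{P}$ is nonzero and not proportional to any $\vect{p}_j$. Write $\vect{p}=\sum_j c_j\vect{p}_j$. At least two coefficients are nonzero: otherwise $\vect{p}$ would be a multiple of a single $\vect{p}_j$, and nonzero. Since $\vect{p}$ is decomposable and bivectors commute in the exterior algebra,
\[
\vect{0}=\vect{p}\wedge\vect{p}=2\sum_{j<k}c_jc_k\,\vect{p}_j\wedge\vect{p}_k .
\]
Linear independence forces $c_jc_k=0$ for all $j<k$, which contradicts the existence of two nonzero coefficients. Hence $\dim\set{H}(\set{P})\le2m-1$.

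The only delicate point is the independence of the products $\vect{p}_j\wedge\vect{p}_k$. This is exactly where the direct-sum hypothesis $\dim\set{H}_0=2m$ enters, and it is immediate once the adapted basis is chosen.
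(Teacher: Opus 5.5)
Your proposal is correct and follows the paper's proof essentially step for step. You reduce to an observed basis whose support planes span $\set{H}(\set{P})$, then rule out the direct-sum case using $\vect{p}\wedge\vect{p}=2\sum_{j<k}c_jc_k\,\vect{p}_j\wedge\vect{p}_k$ and the independence of the four-vectors $\vect{p}_j\wedge\vect{p}_k$. You justify that independence with an adapted basis, whereas the paper uses distinct multidegree components; the two justifications are equivalent.
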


\begin{proof}
Choose $m$ members $\vect{p}_1,\ldots,\vect{p}_m$ forming a basis of $\set{L}$, and put
$\set{E}_j=\operatorname{supp}(\vect{p}_j)$.  Every observed bivector belongs to
$\set{L}\subseteq\bigwedge^2(\set{E}_1+\cdots+\set{E}_m)$,
so $\set{H}(\set{P})=\set{E}_1+\cdots+\set{E}_m$ and $\dim \set{H}(\set{P})\le2m$. 
Suppose equality holds.  Then
$\set{H}(\set{P})=\set{E}_1\oplus\cdots\oplus\set{E}_m$.  For
$\vect{p}=\sum_{j=1}^m x_j \vect{p}_j\in\set{L}$, we have
\[
\vect{p}\wedge \vect{p}
=2\sum_{1\le j<k\le m}x_j x_k\,\vect{p}_j\wedge \vect{p}_k.
\]
The nonzero four-vectors $\vect{p}_j\wedge \vect{p}_k$ lie in distinct multidegree
components relative to $\bigoplus_j \set{E}_j$, and hence are linearly
independent.  Since a nonzero bivector is decomposable exactly when its
exterior square vanishes, decomposability forces $x_jx_k=0$ for all
$j\ne k$.  Thus the only decomposable one-dimensional subspaces represented
in $\set{L}$ are $\RR\vect{p}_1,\ldots,\RR\vect{p}_m$, contrary to
$\nu(\set{P})\ge m+1$.
\end{proof}

A \emph{partial CP decomposition} is a decomposition of the form
\[
\tens{R}=\sum_j \vect{u}_j\otimes \vect{v}_j^{\otimes2}.
\]

\begin{lemma}[Minimal sublist replacement]
\label{lem:minimal-sublist-replacement}
Let
\(
 \tens{T}=\sum_{i\in\set{I}}
 \vect{a}_i\otimes\vect{b}_i\otimes\vect{c}_i
\)
be a minimum CP decomposition and, for \(\set{J}\subseteq\set{I}\), put
\(
 \tens{T}_{\set{J}}=\sum_{i\in\set{J}}
 \vect{a}_i\otimes\vect{b}_i\otimes\vect{c}_i.
\)
Every CP decomposition of \(\tens{T}_{\set{J}}\) has at least \(|\set{J}|\) terms.
Consequently, replacing \(\tens{T}_{\set{J}}\) by a decomposition with at most
\(|\set{J}|\) terms preserves both the length and the minimality of the full
decomposition.
\end{lemma}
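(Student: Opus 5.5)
The proof is a direct exchange argument by contradiction, modeled on the proof of \cref{lem:displayed-cube-deletion}. Write $\tens{T}=\tens{T}_{\set{J}}+\tens{T}_{\set{I}\setminus\set{J}}$. The second summand is displayed by the remaining $|\set{I}|-|\set{J}|$ terms of the chosen decomposition, and $|\set{I}|=r(\tens{T})$ by minimality.

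For the first assertion, suppose $\tens{T}_{\set{J}}$ admits a CP decomposition
\[
\tens{T}_{\set{J}}=\sum_{k=1}^{q}\vect{a}'_k\otimes\vect{b}'_k\otimes\vect{c}'_k
\]
with $q<|\set{J}|$ terms. Concatenating it with the terms indexed by $\set{I}\setminus\set{J}$ gives a CP decomposition of $\tens{T}$ of length
\[
q+|\set{I}|-|\set{J}|<|\set{I}|=r(\tens{T}),
\]
which contradicts the definition of CP rank. Hence every decomposition of $\tens{T}_{\set{J}}$ has at least $|\set{J}|$ terms. In particular, $r(\tens{T}_{\set{J}})=|\set{J}|$, since the displayed sublist itself attains this length.

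For the consequence, let a decomposition of $\tens{T}_{\set{J}}$ with at most $|\set{J}|$ terms be given. By the first part, it has exactly $|\set{J}|$ terms. Substituting it for the sublist indexed by $\set{J}$ therefore yields a decomposition of $\tens{T}$ of length exactly $|\set{I}|=r(\tens{T})$. It is still a valid decomposition because the two pieces sum to $\tens{T}$, and it is minimum because its length equals the CP rank. Consequently, every property derived from minimality remains available for the new decomposition, for example \cref{lem:minimum-cpd}.

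There is no real obstacle here; the only point to check is that the substitution is additive in the tensor, which holds because CP decompositions are plain sums. No symmetry assumption is needed, so the lemma holds verbatim for arbitrary $\set{U}_1\otimes\set{U}_2\otimes\set{U}_3$. This matters for later use, where $\tens{T}_{\set{J}}$ is typically not symmetric (for instance, when passing to partial decompositions $\vect{u}_j\otimes\vect{v}_j^{\otimes2}$).
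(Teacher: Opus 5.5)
Your proposal is correct and follows essentially the same route as the paper: concatenating a shorter decomposition of $\tens{T}_{\set{J}}$ with the unchanged terms indexed by $\set{I}\setminus\set{J}$ contradicts minimality. The consequence then follows immediately, exactly as you spell out.
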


\begin{proof}
If \(r(\tens{T}_{\set{J}})<|\set{J}|\), adjoining the unchanged terms indexed by
\(\set{I}\setminus\set{J}\) gives
\(r(\tens{T})\le r(\tens{T}_{\set{J}})+|\set{I}\setminus\set{J}|<|\set{I}|\),
contrary to minimality.  Thus \(r(\tens{T}_{\set{J}})\ge |\set{J}|\), and the last
assertion follows.
\end{proof}

Grouping proportional wedge directions and diagonalizing the resulting
binary quadratic tensors gives the following partialization criterion.

\begin{lemma}[Direction-group partialization]
\label{lem:direction-group-partialization}
Under the standing hypotheses, suppose that
$\nu(\set{P}_{bc})=\dim \set{L}_{bc}.$ 
Then $\tens{R}$ admits a minimum partial CP decomposition of length $r$:
\[
\tens{R}=\sum_{j=1}^{r}\vect{u}_j\otimes \vect{v}_j^{\otimes2}.
\]
The analogous statement holds for either of the other two pairs of
modes.
\end{lemma}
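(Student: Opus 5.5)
The plan is to treat the terms of \eqref{eq:cube-free-cpd} one wedge direction at a time. For each direction, the alternating identity for the pair $bc$ should force the group of terms to be symmetric in modes two and three. A small diagonalization of binary quadratic forms then rewrites that group with no more terms than it had. \cref{lem:minimal-sublist-replacement} keeps the total length at $r$, and minimality of $r$ gives equality.

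\emph{Step 1: grouping.} Let $m=\dim\set{L}_{bc}$. By hypothesis $\nu(\set{P}_{bc})=m$, so the distinct nonzero directions of $\set{P}_{bc}$ are represented by bivectors $\vect{p}_1,\dots,\vect{p}_m$ that form a basis of $\set{L}_{bc}$. Let $\set{G}_0=\{i:\vect{b}_i\wedge\vect{c}_i=\vect{0}\}$. For $k\ge1$, let $\set{G}_k=\{i:\vect{b}_i\wedge\vect{c}_i=\mu_i\vect{p}_k,\ \mu_i\neq0\}$ and put $\set{E}_k=\operatorname{supp}(\vect{p}_k)$, a plane; for $i\in\set{G}_k$ we have $\vect{b}_i,\vect{c}_i\in\set{E}_k$.

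For $i\in\set{G}_0$ the vectors $\vect{b}_i,\vect{c}_i$ are proportional (both nonzero, since the term is nonzero), say $\vect{b}_i=\lambda_i\vect{c}_i$. The term is then already of partial form $(\lambda_i\vect{a}_i)\otimes\vect{c}_i^{\otimes2}$.

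\emph{Step 2: symmetry of each group.} The second identity in \eqref{eq:alternating-identities}, rearranged, gives
\[
\vect{0}=\sum_{k=1}^m\Bigl(\sum_{i\in\set{G}_k}\mu_i\vect{a}_i\Bigr)\otimes\vect{p}_k .
\]
Since the $\vect{p}_k$ are linearly independent, $\sum_{i\in\set{G}_k}\mu_i\vect{a}_i=\vect{0}$ for every $k$. Now put $\tens{S}_k=\sum_{i\in\set{G}_k}\vect{a}_i\otimes\vect{b}_i\otimes\vect{c}_i\in\set{V}\otimes\set{E}_k\otimes\set{E}_k$. Its antisymmetrization in modes two and three is a multiple of $\sum_{i\in\set{G}_k}\mu_i\vect{a}_i\otimes\vect{p}_k=\vect{0}$, so $\tens{S}_k\in\set{V}\otimes\Sym^2(\set{E}_k)$.

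The same relation, with all $\mu_i\ne0$ and all $\vect{a}_i\ne\vect{0}$, carries two further consequences. First, $|\set{G}_k|\ge2$. Second, $\rho_k\eqdef\dim\Span\{\vect{a}_i:i\in\set{G}_k\}\le|\set{G}_k|-1$. Consequently, the flattening $\set{V}^*\to\Sym^2(\set{E}_k)$ of $\tens{S}_k$ has an image $\set{W}_k$ of dimension at most $\min\{\rho_k,3\}$.

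\emph{Step 3: rewriting with real squares.} I claim $\set{W}_k$ lies in the span of at most $\min\{\rho_k+1,3\}\le|\set{G}_k|$ real squares $\vect{v}^2$ with $\vect{v}\in\set{E}_k$.
\begin{itemize}
\item If $\dim\set{W}_k=1$, real diagonalization writes its generator as $\pm\vect{x}^2\pm\vect{y}^2$, a combination of two squares.
\item If $\dim\set{W}_k\ge2$, use the squares $\vect{x}^2,\vect{y}^2,(\vect{x}+\vect{y})^2$, which span all of $\Sym^2(\set{E}_k)$.
\end{itemize}
Expanding $\tens{S}_k$ in such a spanning family gives $\tens{S}_k=\sum_t\vect{u}_t\otimes\vect{v}_t^{\otimes2}$ with at most $|\set{G}_k|$ terms.

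\emph{Step 4: assembly.} Replace each $\tens{S}_k$ by this rewriting and keep the $\set{G}_0$ terms as in Step 1. The result is a partial CP decomposition of $\tens{R}$ of length at most $r$. Since every partial decomposition is a CP decomposition, minimality of $r$ (equivalently \cref{lem:minimal-sublist-replacement} applied to each $\set{G}_k$) forces the length to be exactly $r$ and the decomposition to be minimum. For the pairs $ca$ and $ab$, cyclically permute the roles of the modes and use the corresponding identity in \eqref{eq:alternating-identities}.

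The main obstacle is the counting in Step 3. One must make sure that converting a non-square binary quadric into squares never costs more than $|\set{G}_k|$ terms. This is exactly where the relation $\sum\mu_i\vect{a}_i=\vect{0}$ is used a second time: it yields the bound $\rho_k\le|\set{G}_k|-1$, which leaves room for the one extra square that real diagonalization may need.
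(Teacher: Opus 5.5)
Your proposal is correct and follows essentially the same route as the paper. You group the terms by wedge direction, use the $bc$ alternating identity to get $\sum_{i}\mu_i\vect{a}_i=\vect{0}$ and hence symmetry of each group in the last two modes, rewrite each group with at most two or three real squares, and close with minimality. Your bound $\rho_k\le|\set{G}_k|-1$ on the span of the first factors just repackages the paper's case split: $|\set{I}_k|=2$ (proportional first factors, spectral theorem) versus $|\set{I}_k|\ge3$ (three-square basis of $\Sym^2(\set{E}_k)$).
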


\begin{proof}
Put $m=\dim \set{L}_{bc}$.  If $m=0$, then $\vect{b}_i\wedge \vect{c}_i=\vect{0}$ for every $i$,
and the displayed decomposition is already partial. 
Assume $m\ge1$.  Choose representatives
$\vect{\Omega}_1,\ldots,\vect{\Omega}_m$ of all the one-dimensional subspaces
represented by the nonzero elements of $\set{P}_{bc}$.  These $m$ bivectors span the $m$-dimensional
space $\set{L}_{bc}$, and hence form a basis. Partition the indices with
nonzero $bc$-wedge into
\[
\set{I}_k=\{i:\vect{b}_i\wedge \vect{c}_i=\lambda_i\vect{\Omega}_k
      \text{ for some }\lambda_i\in\RR^\times\},
\ \  1\le k\le m,
\]
and put $\set{I}_0=\{i:\vect{b}_i\wedge \vect{c}_i=\vect{0}\}$.
The alternating identity in the last two modes gives
$\tens{0}
=\sum_{k=1}^m
\vect{\Omega}_k\otimes
\left(\sum_{i\in \set{I}_k}\lambda_i \vect{a}_i\right).$ 
Since the $\vect{\Omega}_k$ are linearly independent, we have 
\begin{equation}
\label{eq:direction-group-relation}
\sum_{i\in \set{I}_k}\lambda_i \vect{a}_i=\vect{0}
\ \ (1\le k\le m).
\end{equation} 
Fix $k$, set $\set{E}_k=\operatorname{supp}(\vect{\Omega}_k)$, and define
$\tens{R}_k=\sum_{i\in \set{I}_k}\vect{a}_i\otimes \vect{b}_i\otimes \vect{c}_i.$ 
For $i\in \set{I}_k$, both $\vect{b}_i$ and $\vect{c}_i$ belong to the two-dimensional
space $\set{E}_k$.  Moreover,~\eqref{eq:direction-group-relation} gives
$\sum_{i\in\set{I}_k}\vect{a}_i\otimes
(\vect{b}_i\wedge\vect{c}_i)=\tens{0}$.  Thus $\tens{R}_k$ is symmetric in its
last two modes and
$\tens{R}_k\in\Span\{\vect{a}_i:i\in\set{I}_k\}
\otimes\Sym^2(\set{E}_k)$.

No $\set{I}_k$ has cardinality one,
because~\eqref{eq:direction-group-relation} would then force the nonzero factor
$\vect{a}_i$ to vanish.  If $|\set{I}_k|=2$, the two first factors are proportional,
so $\tens{R}_k=\vect{u}_k\otimes \tens{Q}_k$ for some
$\tens{Q}_k\in\Sym^2(\set{E}_k)$.  The real spectral theorem gives
$\tens{Q}_k=\sum_{j=1}^{q_k}\mu_{kj}\vect{v}_{kj}^{\otimes2}$ for
$q_k\le2$, and therefore
$$\tens{R}_k=\sum_{j=1}^{q_k}(\mu_{kj}\vect{u}_k)
\otimes\vect{v}_{kj}^{\otimes2}.$$
The coefficients $\mu_{kj}$, including negative ones, are absorbed
into the first factors; no positivity assumption is used. 
If $|\set{I}_k|\ge3$, choose a basis $\vect{x}_k,\vect{y}_k$ of $\set{E}_k$.  The three squares
$\vect{x}_k^{\otimes2},\ 
\vect{y}_k^{\otimes2}$ and $(\vect{x}_k+\vect{y}_k)^{\otimes2}$ 
form a basis of $\Sym^2(\set{E}_k)$.  Hence $\tens{R}_k$ has a
partial CP decomposition with at most three terms, and therefore with
at most $|\set{I}_k|$ terms.

Finally, if $i\in \set{I}_0$, then $\vect{c}_i=\mu_i \vect{b}_i$ for some
$\mu_i\in\RR^\times$, and
$\vect{a}_i\otimes \vect{b}_i\otimes \vect{c}_i
=(\mu_i \vect{a}_i)\otimes \vect{b}_i^{\otimes2}.$ 
Concatenating the decompositions of the $\tens{R}_k$ with these terms gives a
partial CP decomposition of $\tens{R}$ with at most $r$ nonzero terms.  Since
each replacement is applied to a disjoint sublist,
\cref{lem:minimal-sublist-replacement} shows that no group can be shortened.
Hence the concatenated decomposition has exactly \(r=r(\tens{R})\) terms and
is minimum.
\end{proof}

\subsection{Partial and non-partial residuals}

In the partial branch, a single alternating equation bounds both the
essential dimension and the length of the cube-free residual.

\begin{lemma}[Partial residual bound]
\label{lem:partial-residual-bound}
Let $\tens{R}$ satisfy the hypotheses of
\cref{prop:cube-free-residual} and admit a minimum partial CP
decomposition.  Then $\tens{R}=\tens{M}'+\tens{S}$, where
$\tens{M}'$ is a sum of displayed cubes and either
$\tens{S}=\tens{0}$ or
$\tens{S}$ has a concise, minimum, cube-free partial decomposition.
Moreover, we have 
$e(\tens{S})\le e(\tens{R})$ and $\DeltaT(\tens{R})\le\DeltaT(\tens{S}).$ 
If $\tens{S}\ne\tens{0}$, then
$d(\tens{S})\le2e(\tens{S})$ and $r(\tens{S})\le3e(\tens{S}).$ 
In all cases,
$\DeltaT(\tens{R})\le6e(\tens{R}).$ 
\end{lemma}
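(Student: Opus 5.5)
Start from the given minimum partial CP decomposition $\tens{R}=\sum_{j=1}^{r}\vect{u}_j\otimes\vect{v}_j^{\otimes2}$ and apply \cref{lem:displayed-cube-deletion} to it rather than to the original cube-free decomposition. Let $\set{C}$ be the indices with $\vect{u}_j\parallel\vect{v}_j$. Put $\tens{M}'=\sum_{j\in\set{C}}\vect{u}_j\otimes\vect{v}_j^{\otimes2}$ and $\tens{S}=\tens{R}-\tens{M}'$. That lemma then gives the following.
\begin{itemize}
\item $\tens{S}$ is symmetric, and its displayed partial decomposition is minimum and cube-free.
\item $e(\tens{S})\le e(\tens{R})$ and $\DeltaT(\tens{R})\le\DeltaT(\tens{S})$.
\end{itemize}
Conciseness is handled by \cref{lem:concise-reduction}: replace the ambient space by $\set{V}'=\Ess(\tens{S})$. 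By \cref{lem:minimum-cpd}, the factor lists of the displayed decomposition already span $\set{V}'$. This settles the first half of the statement.

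For the dimension and length bounds, assume $\tens{S}\ne\tens{0}$ and write $r'=r(\tens{S})$, $d'=d(\tens{S})$, $e'=r'-d'$. The decomposition is partial, so its second and third factors coincide. Hence the $bc$ alternating equation is vacuous, and we use the $ab$ one instead:
\[
\sum_j(\vect{u}_j\wedge\vect{v}_j)\otimes\vect{v}_j=\tens{0}.
\]
Cube-freeness says every $\vect{u}_j\wedge\vect{v}_j$ is nonzero. \Cref{lem:wedge-support-bounds} gives $h_{ab}\le2\ell_{ab}\le2e'$. Since no wedge vanishes, $\set{H}_{ab}$ is the span of all $\vect{u}_j,\vect{v}_j$, which is $\set{V}'$. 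Therefore $d'=h_{ab}\le2e'$, and consequently $r'=d'+e'\le3e'$.

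For the final inequality, polarize each term of $\tens{S}$ by \eqref{eq:three-cube}. A partial term $\vect{u}\otimes\vect{v}^{\otimes2}$ in a symmetric sum symmetrizes to $\vect{u}\vect{v}^2$. Since $\tens{S}$ is symmetric, $\tens{S}=\sum_j\vect{u}_j\vect{v}_j^2$, and each summand has symmetric rank at most $3$. This gives $s(\tens{S})\le3r'$ and $\DeltaT(\tens{S})\le2r'\le6e'$.

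Combining, $\DeltaT(\tens{R})\le\DeltaT(\tens{S})\le6e(\tens{S})\le6e(\tens{R})$. If $\tens{S}=\tens{0}$, then $\tens{R}$ is a sum of $r$ cubes, so $s(\tens{R})=r(\tens{R})$ and $\DeltaT(\tens{R})=0$.

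The step needing the most care is the identification $\set{H}_{ab}=\Ess(\tens{S})$ together with the kernel count behind $\ell_{ab}\le e'$. That count requires the third factors $\vect{v}_j$ to span $\Ess(\tens{S})$ for the decomposition of $\tens{S}$ itself. This in turn needs \cref{lem:minimum-cpd} applied to the minimum decomposition of $\tens{S}$ after the ambient space has been reduced; I would verify this before invoking \cref{lem:wedge-support-bounds}.
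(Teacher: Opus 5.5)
Your proposal is correct and follows the paper's proof essentially step for step. You delete the displayed cubes from the partial decomposition and reduce to $\Ess(\tens{S})$. You then use the alternating identity $\sum_j(\vect{u}_j\wedge\vect{v}_j)\otimes\vect{v}_j=\tens{0}$ with the kernel count and support-hull bound to get $d(\tens{S})\le2e(\tens{S})$, and the three-cube identity \eqref{eq:three-cube} to get $\DeltaT(\tens{S})\le2r(\tens{S})\le6e(\tens{S})$. The point you flag is fine: the displayed partial decomposition of $\tens{S}$ is itself a minimum cube-free CP decomposition, so \cref{lem:minimum-cpd} and \cref{lem:wedge-support-bounds} apply to it directly, which is what the paper's ``same matrix argument'' amounts to.
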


\begin{proof}
Apply \cref{lem:displayed-cube-deletion} to the minimum partial
decomposition.  This gives the stated decomposition and the first two
inequalities.  Assume $\tens{S}\ne\tens{0}$, replace its ambient space by
$\Ess(\tens{S})$, and write its residual partial decomposition as
$\tens{S}=\sum_i\vect{u}_i\otimes\vect{v}_i^{\otimes2}$.
By symmetry in the first two modes, we have
$\sum_i(\vect{u}_i\wedge\vect{v}_i)\otimes\vect{v}_i=\tens{0}$.
The relation space of the repeated-factor list $(\vect{v}_i)$ has dimension
$e(\tens{S})$.  The same matrix argument as in
\cref{lem:wedge-support-bounds} therefore gives
$$\dim\Span\{\vect{u}_i\wedge \vect{v}_i\}\le e(\tens{S}).$$
Every term in this displayed cube-free partial decomposition is a noncube, so
$\vect{u}_i\wedge \vect{v}_i\ne\vect{0}$.  The
support hull of this wedge family therefore contains all the $\vect{u}_i$ and
$\vect{v}_i$.  Since both factor lists span $\Ess(\tens{S})$, we have
$d(\tens{S})\le2e(\tens{S})$ and
$r(\tens{S})=d(\tens{S})+e(\tens{S})\le3e(\tens{S})$.
Since $\tens{S}$ is symmetric, symmetrizing its displayed partial
decomposition leaves $\tens{S}$ unchanged.  Each term
$\vect{u}_i\otimes \vect{v}_i^{\otimes2}$ costs at most three real cubes, so
\[
\DeltaT(\tens{S})
\le3r(\tens{S})-r(\tens{S})
=2r(\tens{S})
\le6e(\tens{S})
\le6e(\tens{R}).
\]
The case $\tens{S}=\tens{0}$ is immediate.
\end{proof}

If partialization is impossible, each wedge family represents one more
one-dimensional subspace than its span dimension.  The preceding
bivector lemma then saves one support dimension in every mode pair.

\begin{lemma}[Strict support bounds in the non-partial branch]
\label{lem:nonpartial-support-bound}
Suppose that $\tens{R}$ admits no minimum partial CP decomposition of length
$r$.  Then, for every $xy\in\{ab,bc,ca\}$, we have 
$\nu(\set{P}_{xy})\ge\ell_{xy}+1$ and 
$h_{xy}\le2\ell_{xy}-1.$  
Consequently,
\[
h_{ab}+h_{bc}+h_{ca}
\le2(\ell_{ab}+\ell_{bc}+\ell_{ca})-3
\le6e-3.
\]
\end{lemma}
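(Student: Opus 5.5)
We argue by contraposition, using \cref{lem:direction-group-partialization}, and then feed the resulting inequality on $\nu$ into \cref{lem:plucker-support-saving} and \cref{lem:wedge-support-bounds}.

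\emph{Step 1: the inequality on $\nu$.} Fix a pair $xy\in\{ab,bc,ca\}$. Every nonzero bivector in $\set{L}_{xy}$ is a linear combination of observed ones. Therefore the representatives of the $\nu(\set{P}_{xy})$ distinct directions span $\set{L}_{xy}$, and $\nu(\set{P}_{xy})\ge\ell_{xy}$ always holds. Suppose equality held for some pair. \Cref{lem:direction-group-partialization}, in its version for that pair of modes, would then give a minimum partial CP decomposition of length $r$, contradicting the hypothesis. Hence $\nu(\set{P}_{xy})\ge\ell_{xy}+1$ for every pair.

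One caveat concerns the analogous statement for the pairs $ab$ and $ca$. It produces a decomposition whose repeated factor sits in a different pair of slots, for example $\sum_j\vect{v}_j\otimes\vect{v}_j\otimes\vect{u}_j$. Since $\tens{R}$ is symmetric, permuting the tensor factors of every summand converts this into the form $\sum_j\vect{u}_j\otimes\vect{v}_j^{\otimes2}$ of the same length. So the hypothesis "no minimum partial CP decomposition" rules out all three cases.

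\emph{Step 2: the strict support bound.} Apply \cref{lem:plucker-support-saving} to the family $\set{P}$ of nonzero members of $\set{P}_{xy}$, with $m=\ell_{xy}$. Its hull $\set{H}(\set{P})$ is exactly $\set{H}_{xy}$, so Step 1 gives $h_{xy}\le2\ell_{xy}-1$.

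A degenerate case needs a check: $\ell_{xy}=0$. Then $\nu(\set{P}_{xy})=0$, so equality holds in Step 1. Since $\tens{R}$ is nonzero, it has a term, and the displayed decomposition is then already partial, which again contradicts the hypothesis. Hence $m\ge1$ in every case.

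\emph{Step 3: summation.} Summing the three bounds from Step 2 gives the first inequality,
\[
h_{ab}+h_{bc}+h_{ca}\le2(\ell_{ab}+\ell_{bc}+\ell_{ca})-3.
\]
Inserting $\ell_{xy}\le e$ from \cref{lem:wedge-support-bounds} into the right-hand side gives $6e-3$.

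The only point requiring care is the transfer of partialization between mode pairs by symmetry, which Step 1 addresses. Everything else is a direct invocation of the earlier lemmas.
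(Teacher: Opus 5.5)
Your proposal is correct and follows essentially the same route as the paper. You obtain $\nu(\set{P}_{xy})\ge\ell_{xy}+1$ by contraposition from \cref{lem:direction-group-partialization}, using symmetry of $\tens{R}$ to move the repeated factors into the last two slots. You then apply \cref{lem:plucker-support-saving} to get $h_{xy}\le2\ell_{xy}-1$, and sum using \cref{lem:wedge-support-bounds}. Your separate check of $\ell_{xy}=0$ is harmless but redundant, since Step~1 already rules out equality $\nu(\set{P}_{xy})=\ell_{xy}$ for every value, including $0$.
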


\begin{proof}
Fix $xy\in\{ab,bc,ca\}$ and put $m=\ell_{xy}$.  The nonzero observed
bivectors span $\set{L}_{xy}$, so $\nu(\set{P}_{xy})\ge m$.  Equality would give a
minimum partial CP decomposition of length $r$ by
\cref{lem:direction-group-partialization}, after permuting modes if
necessary.  Since $\tens{R}$ is symmetric, a repeated-factor decomposition in
any pair of modes can be cyclically permuted into one with the last two
factors repeated.  This contradicts the hypothesis.  Therefore
$\nu(\set{P}_{xy})\ge m+1.$ 
Applying \cref{lem:plucker-support-saving} to the nonzero observed
bivectors gives
$h_{xy}\le2m-1.$ 
Summing over the three pairs and using
\cref{lem:wedge-support-bounds} gives the desired inequality.
\end{proof}

\subsection{Cross-term count and completion}

Define
\[
\begin{aligned}
\set{I}_a&=\{i:\vect{b}_i\wedge \vect{c}_i=\vect{0}\},\ \
\set{I}_b=\{i:\vect{c}_i\wedge \vect{a}_i=\vect{0}\},\ \
\set{I}_c=\{i:\vect{a}_i\wedge \vect{b}_i=\vect{0}\},\\
\set{I}_x&=\{i:\vect{a}_i\wedge \vect{b}_i\ne\vect{0},\ \vect{b}_i\wedge \vect{c}_i\ne\vect{0},\
                \vect{c}_i\wedge \vect{a}_i\ne\vect{0}\}.
\end{aligned}
\]
Cube-freeness implies that a summand cannot have two vanishing pairwise
wedges.  Thus these four sets are disjoint and exhaust
$\{1,\ldots,r\}$.
The next count uses the spanning property of each factor list modulo
the three wedge-support hulls.

\begin{lemma}[Direct cross-term count]
\label{lem:direct-cross-count}
One has
\[
|\set{I}_x|\le e-2d+h_{ab}+h_{bc}+h_{ca}.
\]
\end{lemma}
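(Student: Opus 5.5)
The bound will follow from three lower bounds, one for each of $|\set{I}_a|$, $|\set{I}_b|$, $|\set{I}_c|$, together with the partition $r=|\set{I}_a|+|\set{I}_b|+|\set{I}_c|+|\set{I}_x|$ recorded just before the statement. Each lower bound comes from the fact that each factor list spans $\set{V}$ (\cref{lem:minimum-cpd}): a factor list still spans $\set{V}$ modulo a suitable wedge-support hull, and only a few indices can contribute outside that hull.

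\emph{Step 1 (locating the factors).} By definition of the hulls, if $\vect{x}_i\wedge\vect{y}_i\ne\vect{0}$ then $\vect{x}_i,\vect{y}_i\in\set{H}_{xy}$. Cube-freeness, noted above, says each summand has at most one vanishing pairwise wedge. This gives three memberships.
\begin{itemize}
\item For $i\in\set{I}_a\cup\set{I}_b\cup\set{I}_x$ we have $\vect{a}_i\wedge\vect{b}_i\ne\vect{0}$, so $\vect{a}_i\in\set{H}_{ab}$.
\item For $i\in\set{I}_b\cup\set{I}_c\cup\set{I}_x$ we have $\vect{b}_i\wedge\vect{c}_i\ne\vect{0}$, so $\vect{b}_i\in\set{H}_{bc}$.
\item For $i\in\set{I}_a\cup\set{I}_c\cup\set{I}_x$ we have $\vect{c}_i\wedge\vect{a}_i\ne\vect{0}$, so $\vect{c}_i\in\set{H}_{ca}$.
\end{itemize}

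\emph{Step 2 (quotient counts).} Since $(\vect{a}_i)_{i=1}^r$ spans $\set{V}$, its image spans $\set{V}/\set{H}_{ab}$, which has dimension $d-h_{ab}$. By Step 1 only the indices in $\set{I}_c$ can have nonzero image there, so $|\set{I}_c|\ge d-h_{ab}$. In the same way, the $\vect{b}$-list modulo $\set{H}_{bc}$ gives $|\set{I}_a|\ge d-h_{bc}$, and the $\vect{c}$-list modulo $\set{H}_{ca}$ gives $|\set{I}_b|\ge d-h_{ca}$.

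\emph{Step 3 (subtraction).} Using $r=d+e$ and the three bounds of Step 2,
\[
|\set{I}_x|=r-|\set{I}_a|-|\set{I}_b|-|\set{I}_c|
\le d+e-3d+h_{ab}+h_{bc}+h_{ca},
\]
which is the claimed inequality.

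The only delicate point is choosing, for each mode, the hull modulo which to project. The pairing must be such that the excluded indices are exactly one of $\set{I}_a,\set{I}_b,\set{I}_c$ and never $\set{I}_x$. The natural first attempt, projecting $\vect{a}$ modulo $\set{H}_{bc}$, gives a lower bound involving $|\set{I}_x|$, which points the wrong way. The fix is to project each factor modulo a hull containing that very factor, namely $\vect{a}$ modulo $\set{H}_{ab}$, $\vect{b}$ modulo $\set{H}_{bc}$, and $\vect{c}$ modulo $\set{H}_{ca}$.
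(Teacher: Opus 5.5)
Your proposal is correct and is essentially the paper's own proof. The paper obtains the same three bounds $|\set{I}_a|\ge d-h_{bc}$, $|\set{I}_b|\ge d-h_{ca}$, $|\set{I}_c|\ge d-h_{ab}$ by projecting the second, third and first factor lists modulo $\set{H}_{bc}$, $\set{H}_{ca}$, $\set{H}_{ab}$ respectively, then sums over the disjoint partition and uses $r=d+e$.
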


\begin{proof}
If $i\notin \set{I}_a$, then $\vect{b}_i\wedge \vect{c}_i\ne\vect{0}$, and hence
$\vect{b}_i\in \set{H}_{bc}$.  Since the complete second-factor list spans $\set{V}$, the
images of the $\vect{b}_i$ with $i\in \set{I}_a$ span $\set{V}/\set{H}_{bc}$.  Therefore, we have
$|\set{I}_a|\ge d-h_{bc}.$ 
Similarly, using the third-factor list for $\set{I}_b$ and the first-factor
list for $\set{I}_c$, one obtains
$|\set{I}_b|\ge d-h_{ca}$ and $|\set{I}_c|\ge d-h_{ab}.$ 
Since the four index sets form a disjoint partition, we obtain
$$r-|\set{I}_x|=|\set{I}_a|+|\set{I}_b|+|\set{I}_c|
\ge3d-(h_{ab}+h_{bc}+h_{ca}).$$
Using $r=d+e$ and rearranging proves the claim.
\end{proof}

\begin{proof}[Proof of \cref{prop:cube-free-residual}]
First, $e\ge1$.  Indeed, if $e=0$, each factor list is linearly
independent.  The first identity
in~\eqref{eq:alternating-identities} would give
$\vect{a}_i\wedge \vect{b}_i=\vect{0}$ termwise, and a second identity would give
$\vect{b}_i\wedge \vect{c}_i=\vect{0}$ termwise.  Every summand would then be a displayed
cube, contrary to cube-freeness.
Suppose first that $\tens{R}$ admits a minimum partial CP decomposition of
length $r$.  \cref{lem:partial-residual-bound} gives
$\DeltaT(\tens{R})\le6e\le9e-3$, since $e\ge1$.

Suppose now that no such partial decomposition exists.  A summand
indexed by $\set{I}_a\cup \set{I}_b\cup \set{I}_c$ has two proportional factors, so its
full symmetrization has real symmetric rank at most three.  A summand
indexed by $\set{I}_x$ has arbitrary factors, and its full symmetrization has
real symmetric rank at most four.  Since $\tens{R}$ is symmetric, we have
$s(\tens{R})\le3(r-|\set{I}_x|)+4|\set{I}_x|=3r+|\set{I}_x|$.
Consequently, we have 
$\DeltaT(\tens{R})\le2r+|\set{I}_x|.$
By \cref{lem:direct-cross-count,lem:nonpartial-support-bound},
it follows that 
\[
\begin{aligned}
\DeltaT(\tens{R})
&\le2(d+e)+e-2d+h_{ab}+h_{bc}+h_{ca}\\
&=3e+h_{ab}+h_{bc}+h_{ca}
\le3e+2(\ell_{ab}+\ell_{bc}+\ell_{ca})-3 
\le9e-3.
\end{aligned}
\]
This proves the proposition.
\end{proof}

\begin{proof}[Proof of \cref{prop:linear-excess-main}]
If $e(\tens{T})=0$, the factor lists in a minimum CP decomposition are
independent, and the alternating identities force every term to be a
cube.  Thus $\DeltaT(\tens{T})=0$. 
Assume $e(\tens{T})\ge1$, and apply
\cref{lem:displayed-cube-deletion} to write $\tens{T}=\tens{M}+\tens{R}$.  If $\tens{R}=\tens{0}$,
the conclusion is immediate.  Otherwise, \cref{prop:cube-free-residual} gives
$\DeltaT(\tens{T})\le\DeltaT(\tens{R})
\le9e(\tens{R})-3\le9e(\tens{T})-3.$ 
\end{proof}

\subsection{Auxiliary square-covering lemmas}

We record two basis-free square-covering lemmas for the low-excess
argument, including degenerate indefinite forms.  In both lemmas, quadratic forms on
$\set{E}$ are elements of $\Sym^2(\set{E}^*)$, paired naturally with
$\Sym^2(\set{E})$.

\begin{lemma}[Real null-cone determination]
\label{lem:real-null-cone}
Let $\set{E}$ be a real vector space and let $\tens{Q},\tens{P}$ be
quadratic forms on $\set{E}$.
Suppose that $\tens{Q}$ takes both positive and negative values and
$\tens{Q}(\vect{x})=0\Rightarrow\tens{P}(\vect{x})=0$.
Then $\tens{P}=c\tens{Q}$ for some $c\in\RR$.
\end{lemma}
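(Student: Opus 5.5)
The idea is to diagonalize $\tens{Q}$ and use the indefiniteness to produce enough real null vectors that the coefficients of $\tens{P}$ are pinned down.

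\emph{Reduction to the nondegenerate part.} Write $\set{K}=\operatorname{rad}(\tens{Q})=\{\vect{x}:\tens{Q}(\vect{x},\cdot)=0\}$ and choose a complement $\set{E}_0$, so that $\set{E}=\set{E}_0\oplus\set{K}$. On $\set{E}_0$ we pick a $\tens{Q}$-orthogonal basis $\vect{e}_1,\dots,\vect{e}_p,\vect{f}_1,\dots,\vect{f}_q$ with $\tens{Q}(\vect{e}_i)=1$ and $\tens{Q}(\vect{f}_j)=-1$. The hypothesis that $\tens{Q}$ takes both signs gives $p,q\ge1$. Let $\vect{k}_1,\dots,\vect{k}_t$ be a basis of $\set{K}$. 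We then determine the polar bilinear form of $\tens{P}$ on all pairs of these basis vectors.

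\emph{Mixed pairs $\vect{e}_i,\vect{f}_j$.} The vectors $\vect{e}_i\pm\vect{f}_j$ are null for $\tens{Q}$, so $\tens{P}$ vanishes on them. Adding and subtracting the two relations gives
\[
\tens{P}(\vect{e}_i,\vect{f}_j)=0,\qquad \tens{P}(\vect{e}_i)=-\tens{P}(\vect{f}_j).
\]
Using all pairs $(i,j)$, there is a single constant $c$ with $\tens{P}(\vect{e}_i)=c$ and $\tens{P}(\vect{f}_j)=-c$ for every $i,j$.

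\emph{Same-sign pairs $\vect{e}_i,\vect{e}_{i'}$.} For $i\ne i'$ and any $\theta$, the vector $\cos\theta\,\vect{e}_i+\sin\theta\,\vect{e}_{i'}+\vect{f}_1$ is null. Expanding $\tens{P}$ on it and using the previous step leaves $2\sin\theta\cos\theta\,\tens{P}(\vect{e}_i,\vect{e}_{i'})=0$, so choosing $\theta=\pi/4$ gives $\tens{P}(\vect{e}_i,\vect{e}_{i'})=0$. The pairs $\vect{f}_j,\vect{f}_{j'}$ are handled the same way with $\vect{e}_1$ as the partner.

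\emph{Radical directions.} Every $\vect{k}\in\set{K}$ satisfies $\tens{Q}(\vect{k})=0$, and more generally $\vect{k}+s(\vect{e}_1+\vect{f}_1)$ is null for every $s$. Hence $\tens{P}$ vanishes on $\set{K}$, and expanding in $s$ gives $\tens{P}(\vect{k},\vect{e}_1+\vect{f}_1)=0$; using $\vect{e}_1-\vect{f}_1$ as well gives $\tens{P}(\vect{k},\vect{e}_1)=\tens{P}(\vect{k},\vect{f}_1)=0$. For the other basis vectors, use the null vectors $\vect{k}+\vect{e}_i+\vect{f}_1$ and $\vect{k}+\vect{e}_1+\vect{f}_j$. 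With all previously established vanishings, expanding $\tens{P}$ on these yields $\tens{P}(\vect{k},\vect{e}_i)=0$ and $\tens{P}(\vect{k},\vect{f}_j)=0$. Cross terms between two radical vectors vanish because $\vect{k}+\vect{k}'\in\set{K}$. Collecting everything, $\tens{P}$ agrees with $c\tens{Q}$ on every pair of basis vectors, hence $\tens{P}=c\tens{Q}$.

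The main obstacle is organizing the radical case correctly. Degenerate $\tens{Q}$ is the reason the lemma is stated basis-free, and indefiniteness is exactly what supplies the null vectors $\vect{e}_1\pm\vect{f}_1$ whose real multiples span enough directions. Every other step is routine polarization.
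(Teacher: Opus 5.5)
Your proof is correct, but it takes a different route from the paper's.

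You diagonalize $\tens{Q}$ by Sylvester's law and split off the radical $\set{K}$. You then pin down every entry of the Gram matrix of the polar form of $\tens{P}$ by evaluating $\tens{P}$ on explicit null vectors: $\vect{e}_i\pm\vect{f}_j$, $\cos\theta\,\vect{e}_i+\sin\theta\,\vect{e}_{i'}+\vect{f}_1$, $\vect{k}+s(\vect{e}_1\pm\vect{f}_1)$, $\vect{k}+\vect{e}_i+\vect{f}_1$, and $\vect{k}+\vect{e}_1+\vect{f}_j$. Each of these expansions gives exactly the vanishing or equality you claim, using the relations established earlier.

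The paper's argument is basis-free:
\begin{enumerate}[label=\textup{(\arabic*)}]
\item It fixes a single $\vect{b}$ with $\tens{Q}(\vect{b})=-1$.
\item For each $\vect{x}$ with $\tens{Q}(\vect{x})>0$, it uses the two real roots $t=a\pm\sigma$ of $\tens{Q}(\vect{x}+t\vect{b})=0$, where $a$ is the polar pairing of $\vect{x}$ and $\vect{b}$ and $\sigma=\sqrt{a^2+\tens{Q}(\vect{x})}$.
\item Subtracting and substituting the two resulting equations for $\tens{P}$ gives $\tens{P}(\vect{x})=-\tens{P}(\vect{b})\tens{Q}(\vect{x})$ on the open cone $\{\tens{Q}>0\}$.
\item A polarization with $\vect{x}\pm t\vect{u}$ for large $t$ then extends the vanishing of $\tens{P}+\tens{P}(\vect{b})\tens{Q}$ from that open set to all of $\set{E}$.
\end{enumerate}

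The paper's route needs no normal form. It also absorbs the degenerate case automatically, because the radical plays no special role once one works on an open set. Your route is more elementary and entirely finite, with no square roots and no openness argument. The price is separate casework for same-sign pairs and for radical directions.
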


\begin{proof}
% LEAN: real_null_cone_determines_quadratic.
Write $\tens{B}_{\tens{Q}}$ and $\tens{B}_{\tens{P}}$ for the associated symmetric
bilinear forms, and normalize $\vect{b}$ so that $\tens{Q}(\vect{b})=-1$.
For $\tens{Q}(\vect{x})>0$, put
$a=\tens{B}_{\tens{Q}}(\vect{x},\vect{b})$ and
$\sigma=\sqrt{a^2+\tens{Q}(\vect{x})}>0$.  Then
\[
 \tens{Q}\bigl(\vect{x}+(a\pm\sigma)\vect{b}\bigr)=0.
\]
Subtracting the two corresponding equations for $\tens{P}$ gives
$\tens{B}_{\tens{P}}(\vect{x},\vect{b})=-a\tens{P}(\vect{b})$;
substitution gives $\tens{P}(\vect{x})=-\tens{P}(\vect{b})\tens{Q}(\vect{x})$.
Thus $\tens{H}=\tens{P}+\tens{P}(\vect{b})\tens{Q}$ vanishes wherever
$\tens{Q}>0$.  Choose $\vect{u}$ with $\tens{Q}(\vect{u})>0$.
For any $\vect{x}$ and sufficiently large $t$,
$\tens{Q}(\vect{x}\pm t\vect{u})>0$, so
\[
 2\tens{H}(\vect{x})
 =\tens{H}(\vect{x}+t\vect{u})+\tens{H}(\vect{x}-t\vect{u})
   -2t^2\tens{H}(\vect{u})=0.
\]
Hence $\tens{P}=-\tens{P}(\vect{b})\tens{Q}$.
\end{proof}

\begin{lemma}[Square covers of codimension two]
\label{lem:codimension-two-square-cover}
Let $\set{E}$ be a finite-dimensional real vector space,
$N=\dim\Sym^2(\set{E})$, and let
$\set{S}\subseteq\Sym^2(\set{E})$ be a subspace with $\dim\set{S}\le N-2$.
There are $\vect{v}_1,\ldots,\vect{v}_{N-1}\in\set{E}$ such that
\[
 \set{S}\subseteq\Span\{\vect{v}_1^2,\ldots,\vect{v}_{N-1}^2\}.
\]
\end{lemma}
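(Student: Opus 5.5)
The plan is to dualize: a subspace of $\Sym^2(\set{E})$ spanned by squares and of codimension one should be the annihilator of a single quadratic form whose null cone is rich enough. If $\dim\set{E}=1$ then $N=1$ and the hypothesis $\dim\set{S}\le N-2$ cannot hold, so the statement is vacuous. Assume $\dim\set{E}\ge2$.

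\emph{Step 1: reduction to an indefinite annihilating form.} The annihilator $\set{S}^\perp\subseteq\Sym^2(\set{E}^*)$ has dimension at least $2$. Suppose we find $\tens{P}\in\set{S}^\perp$ that takes both positive and negative values. Let $\set{K}=\Span\{\vect{v}^2:\tens{P}(\vect{v})=0\}$, which lies in the hyperplane $\tens{P}^\perp$. Any quadratic form $\tens{Q}$ annihilating $\set{K}$ vanishes on the null cone of $\tens{P}$. By \cref{lem:real-null-cone}, applied with the indefinite form $\tens{P}$ in the role of $\tens{Q}$ there, we get $\tens{Q}=c\tens{P}$. Hence $\set{K}^\perp=\RR\tens{P}$, so $\set{K}=\tens{P}^\perp$ has dimension $N-1$. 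We may then choose null vectors $\vect{v}_1,\ldots,\vect{v}_{N-1}$ of $\tens{P}$ whose squares form a basis of $\tens{P}^\perp$. Since $\tens{P}\in\set{S}^\perp$, we have $\set{S}\subseteq\tens{P}^\perp$, which is the claim. The lemma allows $\tens{P}$ to be degenerate, so no nondegeneracy is needed here.

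\emph{Step 2: existence of an indefinite member.} Pick linearly independent $\tens{P}_1,\tens{P}_2\in\set{S}^\perp$. If either is indefinite, we are done. Otherwise both are semidefinite; after sign changes assume $\tens{P}_1\ne0$ is positive semidefinite.
\begin{itemize}
\item \emph{$\tens{P}_1$ positive definite.} Consider the Rayleigh quotient $\tens{P}_2(\vect{x})/\tens{P}_1(\vect{x})$ on $\set{E}\setminus\{\vect{0}\}$, with minimum $\lambda_{\min}$ and maximum $\lambda_{\max}$. If $\lambda_{\min}=\lambda_{\max}$, then $\tens{P}_2=\lambda\tens{P}_1$, contradicting independence. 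Otherwise, for $\lambda_{\min}<t<\lambda_{\max}$, the form $\tens{P}_2-t\tens{P}_1$ is indefinite.
\item \emph{$\tens{P}_1$ degenerate semidefinite, with kernel $\set{N}\ne\vect{0}$.} Consider $\tens{P}_2+s\tens{P}_1$ for a suitable $s$.
  \begin{itemize}
  \item If $\tens{P}_2$ is nonzero somewhere on $\set{N}$, take $\vect{n}\in\set{N}$ with $\tens{P}_2(\vect{n})\ne0$. Then a vector $\vect{x}$ outside $\set{N}$ and large $|s|$ of the right sign give opposite signs at $\vect{n}$ and $\vect{x}$.
  \item If $\tens{P}_2$ vanishes on $\set{N}$, then, being semidefinite, $\tens{P}_2$ has $\set{N}$ in its kernel. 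Both forms then descend to $\set{E}/\set{N}$, where $\tens{P}_1$ is definite, and we are back in the first case.
  \end{itemize}
\end{itemize}

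The main obstacle is precisely this pencil argument, namely making sure every two-dimensional pencil of real quadratic forms contains an indefinite member. The generalized-eigenvalue argument above is what I would write out carefully. Once an indefinite member exists, the rest follows immediately from \cref{lem:real-null-cone}.
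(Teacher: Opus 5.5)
Your proof is correct and has the same structure as the paper's. Both find an indefinite quadratic form in $\set{S}^\perp$, then apply \cref{lem:real-null-cone} together with duality to see that its null squares span the hyperplane it annihilates.

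The only real difference is your Step 2. The paper builds the indefinite member explicitly, with no case split on definiteness:
\begin{itemize}
\item pick $\vect{a}$ with $\tens{P}(\vect{a})\ne0$;
\item replace the second form by $\tens{R}-\tens{R}(\vect{a})\tens{P}/\tens{P}(\vect{a})$, so that $\tens{R}(\vect{a})=0$ and $\tens{R}\ne\tens{0}$;
\item pick $\vect{b}$ with $\tens{R}(\vect{b})\ne0$ and set $\tens{Q}=\tens{P}(\vect{a})\bigl(\tens{P}-\frac{\tens{P}(\vect{a})+\tens{P}(\vect{b})}{\tens{R}(\vect{b})}\tens{R}\bigr)$.
\end{itemize}
Two evaluations then give $\tens{Q}(\vect{a})=\tens{P}(\vect{a})^2>0$ and $\tens{Q}(\vect{b})=-\tens{P}(\vect{a})^2<0$.

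Your route also works. It uses the Rayleigh quotient when $\tens{P}_1$ is definite, which needs compactness to attain $\lambda_{\min}$ and $\lambda_{\max}$. In the degenerate case it uses the fact that a semidefinite form vanishes exactly on its radical, which you need both for the sign argument off $\set{N}$ and for the descent to $\set{E}/\set{N}$. In exchange you get a little more information: for instance, the whole open interval $(\lambda_{\min},\lambda_{\max})$ of the pencil is indefinite in the definite case. For the lemma, though, the paper's two-point evaluation is shorter and needs no bookkeeping.
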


\begin{proof}
% LEAN: square_cover_of_codimension_two; nullSquareSpan_eq_hyperplane.
Choose independent $\tens{P},\tens{R}\in\set{S}^{\perp}$ and
$\vect{a}$ with $\tens{P}(\vect{a})\ne0$.
Replacing $\tens{R}$ by
$\tens{R}-\tens{R}(\vect{a})\tens{P}/\tens{P}(\vect{a})$,
we may assume $\tens{R}(\vect{a})=0$ while $\tens{R}\ne\tens{0}$.
Choose $\vect{b}$ with $\tens{R}(\vect{b})\ne0$ and put
\[
 \tens{Q}=\tens{P}(\vect{a})
 \left(\tens{P}
 -\frac{\tens{P}(\vect{a})+\tens{P}(\vect{b})}{\tens{R}(\vect{b})}\tens{R}\right)
 \in\set{S}^{\perp}.
\]
Then $\tens{Q}(\vect{a})=\tens{P}(\vect{a})^2>0$ and
$\tens{Q}(\vect{b})=-\tens{P}(\vect{a})^2<0$.
By \cref{lem:real-null-cone} and duality, we have
$\Span\{\vect{v}^2:\tens{Q}(\vect{v})=0\}=\tens{Q}^{\perp}.$ 
This hyperplane has dimension $N-1$ and contains $\set{S}$.
Selecting a basis of null squares proves the assertion.
\end{proof}

\section{Zero gap through excess three}
\label{sec:proof-small-excess}

This section proves \cref{prop:excess-three-zero}.  Unless stated
otherwise, $\tens{T}$ is a concise real symmetric cubic with a fixed
minimum CP decomposition, and
\(r,d,e\) denote
\(r(\tens{T}),d(\tens{T}),r(\tens{T})-d(\tens{T})\).
We use the classical equalities
$\Mthree(1)=1$, $\Mthree(2)=3$, and $\Mthree(3)=5$
\cite{reznick2013length,banchi2015ternary}; the inequality
$\Mthree(4)\leq7$ is proved in \cref{lem:four-variable-seven}.

\subsection{Auxiliary low-dimensional bounds and descent}

\subsubsection{A seven-cube bound in four variables}

We identify $\Sym^3(\set{U})$ with the cubic forms on $\set{U}^*$.
For $\alpha\in\set{U}^*$, let $D_\alpha \tens{T}$ denote the directional derivative;
thus $D_\alpha L^3=3\alpha(L)L^2$ and
$D_\alpha\tens{T}=3(\alpha\mathbin{\lrcorner}\tens{T})$ under the
contraction normalization used below.

\begin{lemma}[A real polar of rank at most two]
\label{lem:quaternary-low-rank-polar}
Every $\tens{T}\in\Sym^3(\RR^4)$ admits a nonzero
$\alpha\in(\RR^4)^*$ such that
\[
 \rank(D_\alpha \tens{T})\leq2.
\]
\end{lemma}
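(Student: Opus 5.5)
The plan is to study the linear polar map $\alpha\mapsto \matr{M}(\alpha)\eqdef D_\alpha\tens{T}$ from $(\RR^4)^*$ into $\Sym^2(\RR^4)$, that is, into real symmetric $4\times4$ matrices, and to produce a nonzero $\alpha$ on which all $3\times3$ minors of $\matr{M}(\alpha)$ vanish. Two facts drive the argument. The first is linearity, $\matr{M}(-\alpha)=-\matr{M}(\alpha)$. The second is the symmetry of third derivatives, $\matr{M}(\alpha)\beta=\matr{M}(\beta)\alpha$, where both sides are $D_\alpha D_\beta\tens{T}$ viewed as a vector.

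\textbf{Reductions.} If $\tens{T}$ is not concise, then by \cref{lem:contraction-essential-support} some nonzero $\alpha$ lies in $\ker\matr{F}_{\tens{T}}$, so $D_\alpha\tens{T}=0$ and we are done. Assume therefore that $\tens{T}$ is concise, so $\alpha\mapsto\matr{M}(\alpha)$ is injective.

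\textbf{Step 1: a degenerate polar.} Consider the signature of $\matr{M}(\alpha)$ as $\alpha$ ranges over the unit sphere $S^3$, which is connected. Since $\matr{M}(-\alpha)=-\matr{M}(\alpha)$, the signature $(p,q)$ at $\alpha$ becomes $(q,p)$ at $-\alpha$. If some polar has signature different from $(2,2)$, then along a path from $\alpha$ to $-\alpha$ an eigenvalue must cross zero. Hence the Hessian quartic $h(\alpha)=\det\matr{M}(\alpha)$ has a real zero. In the remaining case, every nondegenerate polar has signature $(2,2)$. I would then show $h$ still has a real zero: restrict to a plane $\set{P}\subset(\RR^4)^*$ and use that a net of quadrics of constant signature $(2,2)$ and full rank on all of $\set{P}\setminus\{0\}$ is impossible. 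The argument is to look at the positive $2$-plane bundle over $\RR P^2$; the antipodal relation $\matr{M}(-\alpha)=-\matr{M}(\alpha)$ swaps this bundle with the negative bundle. This forces a Stiefel--Whitney class contradiction analogous to the nonexistence of $3$-dimensional nonsingular pencils of $4\times4$ symmetric matrices of split signature.

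\textbf{Step 2: from corank one to corank two.} Let $\set{Z}=\{\alpha\ne0: h(\alpha)=0\}$, which is a real cone by Step 1. For $\alpha\in\set{Z}$ with kernel vector $\beta$, symmetry gives $\matr{M}(\beta)\alpha=\matr{M}(\alpha)\beta=0$, so $\beta\in\set{Z}$ as well. This defines the classical Steinerian correspondence $\alpha\mapsto\ker\matr{M}(\alpha)$ on the real Hessian surface. The points where $\matr{M}(\alpha)$ has rank $\le2$ are exactly the singular points of the Hessian surface at which this correspondence is not single-valued. I would try first to choose $\beta\in\ker\matr{M}(\alpha)$ and pass to the pencil $s\alpha+t\beta$. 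On this pencil, $\matr{M}(s\alpha+t\beta)$ restricted to $\Span\{\alpha,\beta\}$ is controlled by the binary cubic $\tens{T}|_{\Span\{\alpha,\beta\}}$. Diagonalizing this binary form, where $\Mthree(2)=3$, and completing to a ternary problem on a hyperplane containing $\beta$ (using $\Mthree(3)=5$ and the real normal forms of ternary cubics) should exhibit an explicit $\alpha'$ with a second kernel direction. If that fails, the fallback is to run the same sign-change argument as Step 1 on the induced $3\times3$ polar family modulo $\beta$: look at the symmetric form induced by $\matr{M}(\cdot)$ on $\beta^{\perp}/\RR\beta$, and force a further eigenvalue crossing there.

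\textbf{Main obstacle.} I expect Step 2 to be the hardest part: upgrading a corank-one polar to a corank-two polar. The rank-$\le2$ locus in $\mathbb{P}(\Sym^2)$ has codimension $3$ and even degree $10$, so no naive mod-$2$ intersection count guarantees a real point. The symmetry relation $\matr{M}(\alpha)\beta=\matr{M}(\beta)\alpha$ is therefore the essential extra structure that the argument must exploit.
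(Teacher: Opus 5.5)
\textbf{There is a genuine gap, and it is where you expect it: Step~2 contains no argument.} Producing a real polar of corank two is the entire content of the lemma, and none of your sketched routes delivers it.

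\begin{itemize}
\item The facts $\Mthree(2)=3$ and $\Mthree(3)=5$ concern Waring decompositions, not kernels of polars.
\item The fallback does not work either. By the symmetry you invoke, $\matr{M}(\gamma)\beta=\matr{M}(\beta)\gamma$, so the polars having $\beta$ in their kernel are exactly those with $\gamma\in\ker\matr{M}(\beta)$. If $\matr{M}(\beta)$ has corank one, this is the single line $\RR\alpha$; if it has corank two, you were already done. So there is no family along which a second kernel direction can be forced.
\item A sign change of some induced $3\times3$ form detects only a codimension-one degeneracy. Corank two is a codimension-three condition, and, as you note, the ten complex rank-two polars carry no parity guarantee. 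A rigid algebraic input is needed.
\end{itemize}

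\textbf{Step~1 is also flawed as written,} because nonsingular nets of split signature do exist. For $(a,b,c)\ne0$, take the symmetric matrix $\left(\begin{smallmatrix} aI_2 & C\\ C^{\mathsf T} & -aI_2\end{smallmatrix}\right)$ with $C=bI_2+c\left(\begin{smallmatrix}0&-1\\ 1&0\end{smallmatrix}\right)$. It squares to $(a^2+b^2+c^2)I_4$ and has trace zero, so it is invertible of signature $(2,2)$. Hence a bundle argument over $\RR P^2$ that uses only the antipodal relation cannot reach a contradiction. The conclusion of Step~1 is true, but it needs the whole four-dimensional web. One way is the Adams--Lax--Phillips bound: linear spaces of nonsingular real symmetric $4\times4$ matrices have dimension at most three.

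\textbf{The missing idea is the one your Hessian remark points toward.} For a general cubic surface the ten rank-two polars are the vertices of Sylvester's pentahedron, and the paper uses this directly.
\begin{enumerate}
\item For general real $\tens{T}$, Sylvester's pentahedral theorem gives a unique complex decomposition $\tens{T}=L_1^3+\cdots+L_5^3$. Complex conjugation therefore induces an involution on the five lines $\mathbb{C}L_i$.
\item Every involution of a five-element set preserves some three-element subset $\set{S}$: either three fixed points, or one fixed point together with a swapped pair.
\item The common annihilator of $\{L_i:i\in\set{S}\}$ is a nonzero conjugation-stable subspace of $(\mathbb{C}^4)^*$, so it contains a nonzero real $\alpha$.
\item Then $D_\alpha\tens{T}=3\sum_{i\notin\set{S}}\alpha(L_i)L_i^2$ has complex, hence real, rank at most two.
\item For arbitrary $\tens{T}$, approximate by general real cubics. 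The pentahedral locus is Zariski open and defined over $\RR$, so its real points are dense. Normalize the covectors $\alpha_n$ to the unit sphere, extract a convergent subsequence, and pass to the limit in the vanishing of all $3\times3$ minors.
\end{enumerate}
This route needs neither your Step~1 nor any topology.
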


\begin{proof}
Assume first that $\tens{T}$ is general.  By Sylvester's pentahedral theorem
\cite[Theorem~2.1]{seigal2020ranks}, its complex Waring decomposition
$\tens{T}=L_1^3+\cdots+L_5^3$ is unique.  Complex conjugation therefore permutes the five
one-dimensional subspaces $\mathbb{C}L_i$.  Every involution on a five-element set has a conjugation-invariant
three-element subset: take three fixed points when they exist, and otherwise
take one fixed point together with a conjugate pair.  If $\set{S}$ is such a
subset, then the common annihilator
$$\set{K}=(\Span_{\mathbb C}\{L_i:i\in\set{S}\})^\perp$$ 
is a nonzero conjugation-invariant complex subspace.  Hence $\set{K}$ is the
complexification of its real fixed subspace and contains a nonzero real
covector $\alpha$.  The real quadratic $D_\alpha\tens{T}$ has complex
matrix rank at most two, because over $\mathbb C$ it is the sum of the remaining
two squares.  Scalar extension does not change the rank of a real matrix, so
$\rank_{\RR}(D_\alpha\tens{T})=\rank_{\mathbb C}(D_\alpha\tens{T})\le2$.

The pentahedral locus is a nonempty Zariski-open subset defined over $\RR$.
Its complement is a proper real algebraic subset and therefore has empty
Euclidean interior, so the real points of this locus are Euclidean dense.
For arbitrary $\tens{T}$, choose general real cubics
$\tens{T}_n\to \tens{T}$ and unit real covectors $\alpha_n$ as above.  After passing to a subsequence,
$\alpha_n\to\alpha$ with $\|\alpha\|=1$.  All $3\times3$ minors of
$D_{\alpha_n}\tens{T}_n$ vanish.  Passing to the limit gives
$\rank(D_\alpha \tens{T})\le2$.
\end{proof}

We use repeatedly the real polarization identities
\begin{align*}
 x^2M&=\frac{(x+M)^3-(x-M)^3-2M^3}{6},\\
 24UVW&=(U+V+W)^3+(U-V-W)^3+(-U+V-W)^3+(-U-V+W)^3.
\end{align*}
Throughout, a real scalar multiple of a cube counts as one real cube, since
the scalar can be absorbed into the linear form by taking its real cube root.

\begin{proposition}[Four-variable real cubics]
\label{lem:four-variable-seven}
Every real cubic in at most four variables is a sum of at most seven
real cubes.  Equivalently,
\[
 \Mthree(4)\le7.
\]
\end{proposition}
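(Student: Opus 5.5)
We plan to use \cref{lem:quaternary-low-rank-polar} to split off a piece of the cubic that reduces to three variables, where $\Mthree(3)=5$. By \cref{lem:concise-reduction} we may assume $\tens{T}\in\Sym^3(\RR^4)$. If $\tens{T}$ involves at most three variables, then $s(\tens{T})\le\Mthree(3)=5$ and we are done. Otherwise we choose a nonzero $\alpha$ with $\rank(D_\alpha\tens{T})\le2$, complete $\alpha$ to a dual basis, and pick coordinates $x,y,z,w$ with $\alpha=\partial_x$. In these coordinates
\[
\tens{T}=c\,x^3+x^2L+x\,G+F,
\]
where $L$ is linear in $y,z,w$, $G$ is quadratic in $y,z,w$, and $F$ is cubic in $y,z,w$. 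The derivative is $D_\alpha\tens{T}=3cx^2+2xL+G$, and by construction this quadratic form has rank at most two.

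The argument splits according to whether $c\ne0$ (the transverse case) or $c=0$.

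\emph{Transverse case, $c\ne0$.} Substitute $x\mapsto x-L/(3c)$. This removes the $x^2L$ term, so $\tens{T}=c x^3+xG'+F'$ with $G',F'$ in $y,z,w$, and $D_\alpha\tens{T}=3cx^2+G'$. Since this has rank at most two and $3cx^2$ already accounts for one, $\rank G'\le1$, say $G'=\lambda M^2$. If $G'=0$, then $\tens{T}$ is a cube plus a ternary cubic, giving at most $1+5=6$ cubes. Otherwise we write $\lambda xM^2=\lambda\bigl((x+M)^3+(x-M)^3-2x^3\bigr)/6$ using \eqref{eq:three-cube}; the $x^3$ parts merge into a single cube. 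This yields $x^3$, $(x+M)^3$, $(x-M)^3$ plus $F'$, for a count of $3+5=8$. To save one cube, we will instead absorb $-\tfrac{\lambda}{3}x^3$ differently, or better, use $(x+M)^3$ together with the cube adjusted to $x+tM$. The idea is to choose the split $c x^3+\lambda xM^2=\mu_1(x+t_1M)^3+\mu_2(x+t_2M)^3+(\text{a cubic in } M)$, where the leftover $M^2$ and $M^3$ terms go into $F'$. Matching the coefficients of $x^3,x^2M,xM^2$ gives three equations in $\mu_1,\mu_2,t_1,t_2$, which is solvable over $\RR$ when $t_1=-t_2$ and $\mu_1=\mu_2$ have the right sign, and otherwise solvable by a suitable generic choice. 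The total is then $2+\Mthree(3)=7$.

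\emph{Non-transverse case, $c=0$.} Here $D_\alpha\tens{T}=2xL+G$ has rank at most two. If $L\ne0$, the form $2xL$ has rank two, which forces $G\in\Span$ of terms compatible with $L$. Concretely, rank at most two forces $G=L\cdot N$ for a linear form $N$; after completing, $\tens{T}=x^2L+xLN+F=L(x^2+xN)+F$, which we rewrite as $L(x+N/2)^2+F''$ and treat via \eqref{eq:three-cube} in the same way. If $L=0$, then $G$ has rank at most two and $\tens{T}=xG+F$. In that case we perturb $\alpha$ within the closed condition or use a different derivative direction.

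We expect the main obstacle to be exactly this non-transverse case, together with proving real solvability in the seven-cube count of the transverse case. Our first attempt at the non-transverse case will be a limiting argument: the set of cubics with at most seven real cubes is not closed over $\RR$, so limits cannot be used directly. Instead we will look for a coordinate change $x\mapsto x+\beta$ and choose a different $\alpha$ from the family of rank-at-most-two polars, which is a curve, so that the polar becomes transverse whenever $\tens{T}$ is concise. The remaining degenerate configurations are to be handled by explicit normal forms.
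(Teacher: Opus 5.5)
Your transverse analysis is sound and is essentially the paper's one-paragraph transverse step. For $c\neq0$ the two-cube split exists because you only need real $t_1\neq t_2$ with $t_1t_2=-\lambda/(3c)$. Your case $c=0$, $L\neq0$ is in fact also transverse (there $D_\alpha^2\tens{T}=2L\neq0$), and $\tens{T}=L(x+N/2)^2+F''$ together with \eqref{eq:three-cube} gives $2+5$ cubes.

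The genuine gap is the tangent case $c=L=0$, i.e.\ $\tens{T}=xG+F$ with $1\le\rank G\le2$. This is where almost all of the paper's proof lies, and your proposed remedy fails. The locus of $\alpha$ with $\rank D_\alpha\tens{T}\le2$ is not a curve in general: over $\mathbb{C}$, for general $\tens{T}=\sum_{i=1}^5L_i^3$, it is the ten vertices of Sylvester's pentahedron. More to the point, there are concise cubics for which every real such $\alpha$ is tangent. Take $\tens{T}=t(x^2-y^2)+2xyz$ and $\alpha=(a_t,a_x,a_y,a_z)$. The matrix of $D_\alpha\tens{T}$ has two $3\times3$ minors equal to $a_x(a_x^2+a_y^2)$ and $a_y(a_x^2+a_y^2)$. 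So over $\RR$, rank at most two forces $a_x=a_y=0$, and then $D_\alpha\tens{T}=a_t(x^2-y^2)+2a_zxy$ lies in $\Sym^2\langle x,y\rangle$ with $\alpha(x)=\alpha(y)=0$. Hence every transverse $\alpha$ has polar rank at least three, and polar completion gives at best $3+\Mthree(3)=8$. No change of $\alpha$ or of coordinates brings this concise cubic into your transverse cases.

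What is missing is an explicit treatment of the tangent configurations, and this is the substance of the proposition. The paper handles them case by case.
\begin{itemize}
\item \emph{Rank one}, $\tens{T}=tx^2+R(x,y,z)$: four cubes $(L_i+d_ix/(3c_i))^3$ with binary $L_i$ reproduce $R$ modulo $x^2$, and the remainder $x^2M$ costs three cubes.
\item \emph{Rank two}, $\tens{T}=t\tens{Q}(x,y)+P+zR+z^2L+cz^3$, split further as follows.
\begin{itemize}
\item $c\neq0$: for definite $\tens{Q}$, one cube plus $u^2M+v^2N$; for split $\tens{Q}$, the fixed-weight identity with weights $(2,2,-3)$, leaving a product of three linear forms.
\item $c=0\neq L$: reduction to a transverse polar if $L\nmid\tens{Q}$; if $L\mid\tens{Q}$, an explicit identity $\lambda(K_1^3+K_2^3-2K_3^3)$, again leaving a product of three linear forms.
\item $c=L=0$: according to the real type of the pencil $\langle\tens{Q},R\rangle$.
\end{itemize}
\end{itemize}
The no-real-root pencil reduces exactly to the example above. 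There the paper needs the seven-point identity $\tens{T}=\frac{4}{21}\sum_{k=0}^{6}(x\cos\theta_k+y\sin\theta_k+t\cos2\theta_k+z\sin2\theta_k)^3$ with $\theta_k=2\pi k/7$. Deferring these cases to ``explicit normal forms'' leaves the proposition unproved.
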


\begin{proof}
Choose $\alpha$ as in \cref{lem:quaternary-low-rank-polar}, put
$\tens{Q}=D_\alpha \tens{T}$, and write $h=\rank\tens{Q}\le2$.
If $h=0$, then $\tens{T}$ is a ternary cubic and has real Waring rank at most five
by \cite{banchi2015ternary}.  If
$\Ess(\tens{Q})\not\subseteq\ker\alpha$, diagonalization on the
support of $\tens{Q}$, followed by a real orthogonal or hyperbolic change of basis,
gives $\tens{Q}=\sum_{i=1}^{h}c_iL_i^2$ with $\alpha(L_i)\ne0$.
Indeed, the case $h=1$ is immediate.  If $h=2$, put
$\set{W}=\Ess(\tens{Q})$, and let $b_{\tens{Q}}$ be the nondegenerate
bilinear form on $\set{W}$ corresponding to the inverse of
$\tens{Q}^{\sharp}:\set{W}^*\to\set{W}$.  Choose
$\vect{v}\in\set{W}$ outside the finite union of the $b_{\tens{Q}}$-isotropic lines,
$\ker(\alpha|_{\set{W}})$, and
$(\ker(\alpha|_{\set{W}}))^{\perp_{b_{\tens{Q}}}}$.  If $\vect{w}$ spans
$\vect{v}^{\perp_{b_{\tens{Q}}}}$, then
$\alpha(\vect{v})\alpha(\vect{w})\ne0$, and
$(\vect{v},\vect{w})$ is a $b_{\tens{Q}}$-orthogonal basis, which diagonalizes
$\tens{Q}$ in the required form.
Consequently, we obtain
$$D_\alpha\!\left(\tens{T}-\sum_{i=1}^{h}
 c_iL_i^3/(3\alpha(L_i))\right)=\tens{0},$$
and the residual is ternary.  Thus $s(\tens{T})\le h+5\le7$.

It remains to treat the tangent case
$\Ess(\tens{Q})\subseteq\ker\alpha$.

\smallskip
\noindent\emph{Rank one.}
After replacing $\alpha$ by $-\alpha$ if necessary and rescaling $x$, choose
coordinates such that $\alpha=D_t$ and $\tens{Q}=x^2$.
Then we have $\tens{T}=tx^2+\tens{R}(x,y,z)$ and
$\tens{R}\equiv P(y,z)+xR(y,z)\pmod{x^2}$.
If $P\ne0$, choose four distinct real binary linear forms $L_i$ such that
$P=\sum_{i=1}^{4}c_iL_i^3$ and $c_i\ne0$.
To see that such a choice exists, fix coordinates on $\Sym^3(\RR^2)$.
For each $i$, the determinant with columns $P$ and $L_j^3$, $j\ne i$,
is a nonzero polynomial in the coefficients of the $L_j$: three cubes
can be chosen so that they and $P$ form a basis.  The product of these
four determinants and the pairwise $2\times2$ determinants of the $L_i$
is therefore a nonzero real polynomial.  Choose a real tuple where it
does not vanish.  The Vandermonde determinant then shows that the four
cubes $L_i^3$ form a basis, and the four coordinates $c_i$ of $P$ are nonzero.
Moreover, the squares of any three distinct binary linear forms are
independent, so the $L_i^2$ span $\Sym^2(\RR^2)$.  Write
$R=\sum_i d_iL_i^2$.  Then
\[
 \sum_{i=1}^{4}c_i
 \left(L_i+\frac{d_i}{3c_i}x\right)^3
 \equiv P+xR\pmod{x^2}.
\]
If $P=0$, a spectral decomposition of $R$ and
$(L+qx/3)^3-L^3\equiv qxL^2\pmod{x^2}$ give the same conclusion with
at most four cubes.  The remaining term is $x^2M$ and costs three cubes.

\smallskip
\noindent\emph{Rank two.}
Choose coordinates such that $\alpha=D_t$.  Then we have
$$\tens{T}=t\tens{Q}(x,y)+P(x,y)+zR(x,y)+z^2L(x,y)+cz^3,$$
where $\tens{Q}$ is a nondegenerate binary quadratic form.  Put
$\tens{G}=P+zR+z^2L+cz^3$. 
Suppose first that $c\ne0$.  If $\tens{Q}$ is definite, take $\tens{Q}=x^2+y^2$ and
$n^3=c$.  Subtracting $(nz+L/(3n^2))^3$ eliminates the $z^3$ and
$z^2L$ terms.  An orthogonal change of $x,y$
diagonalizes the remaining coefficient of $z$ while preserving $\tens{Q}$.
The residual is then $u^2M+v^2N$, which costs at most six cubes; together
with the subtracted cube, this gives at most seven cubes.

If $\tens{Q}$ is split, take $\tens{Q}=xy$.  We claim that every
$B(x,z)=z^3+3mxz^2+3nx^2z+px^3$
has an expression
\begin{equation}
\label{eq:quaternary-fixed-three-weights}
 B=2(z+a_1x)^3+2(z+a_2x)^3-3(z+a_3x)^3
\end{equation}
% LEAN: exists_fixed_three_weight_decomposition.
with $a_1,a_2,a_3\in\RR$.  Put
$V=n-m^2$, $S=p-3mn+2m^3$, and
$h(\tau)=8\tau^3/9+3V\tau$.
There is a real solution of $h(\tau)=S$ satisfying
$V/4+\tau^2/3\ge0$.  This is immediate for $V\ge0$; for $V<0$, put
$T_0=(-3V/4)^{1/2}$ and use
$h(T_0)=7VT_0/3<0<h(-T_0)$ together with the behavior at infinity.  With
\[
 \sigma=\left(\frac V4+\frac{\tau^2}{3}\right)^{1/2},\ \ 
 a_{1,2}=m+\tau\pm\sigma,\ \  a_3=m+\frac43\tau,
\]
the four weighted moments with weights $(2,2,-3)$ are $1,m,n,p$,
which proves \eqref{eq:quaternary-fixed-three-weights}. 
Apply \eqref{eq:quaternary-fixed-three-weights} to
$\tens{G}(x,0,z)/c$ and $\tens{G}(0,y,z)/c$.
Because the weights are the same,
there are real $K_i=z+a_ix+b_iy$ such that
$\tens{G}-c(2K_1^3+2K_2^3-3K_3^3)$ vanishes on both $x=0$ and $y=0$,
and hence is divisible by $xy$.
Thus $\tens{T}$ is three cubes plus a product of three linear forms, and costs at
most seven cubes.

Now assume $c=0$ and $L\ne0$.  If $L\nmid\tens{Q}$, multiplication by $L$
induces an isomorphism
$\langle x,y\rangle\to\Sym^2\langle x,y\rangle/\RR\,\tens{Q}$.
Choose a linear form $H$ such that $R+2HL=k\tens{Q}$.  The changes
$z=z'+H$ and $t'=t+kz'$ transform $\tens{T}$ into
$$\tens{T}=t'\tens{Q}+z'^2L+P'(x,y).$$ 
The polar $D_{z'}\tens{T}=2z'L$ 
is transverse of rank two, so the first part of
the proof applies. 
If $L\mid\tens{Q}$, take $\tens{Q}=xy$ and $L=x$.  Absorbing all
multiples of $xy$ into $t$ gives
$$\tens{T}=txy+xz^2+ax^2z+\beta y^2z+A_0x^3+B_0y^3.$$ 
For any real $v$, the invertible triangular change
\[
 z=z'-ax/2+vy,\ \ 
 t'=t+2vz'+(v^2-a\beta/2)y
\]
gives
$\tens{T}=t'xy+xz'^2+\beta y^2z'+Ax^3+By^3$, where
$A=A_0-a^2/4$ and $B=B_0+\beta v$.
If $\beta=0$, the $z'$-polar is transverse.  If $\beta\ne0$, take
$v=-B_0/\beta$, so $B=0$, and relabel $t',z'$ as $t,z$.
% LEAN: exists_exceptional_tangent_parameters/completion.
Put $m=12\operatorname{sgn}(\beta)(|A|+1)$ and
$\lambda=1/(3m)$.  Then $\beta m>0$ and $m^2/12>A$, so
$\beta/\lambda>0$ and $1/(108\lambda^2)>A$.  Set
\[
 \tau=-\operatorname{sgn}(\lambda)
       \sqrt{\frac1{108\lambda^2}-A},\quad
 \sigma=\sqrt{\frac{m^2}{4}-m\tau},\quad
 \eta=\sqrt{\frac\beta{6\lambda}}.
\]
These are real: $m\tau\le0$ and $\beta/\lambda>0$.  In particular,
$3\lambda m=1$, $\tau^2=m^2/12-A$,
$\sigma^2=m^2/4-m\tau$, and $6\lambda\eta^2=\beta$.
For
\[
 K_1=z+(\tau+\sigma)x+\eta y,\quad
 K_2=z+(\tau-\sigma)x-\eta y,\quad
 K_3=z+(\tau-m/2)x
\]
direct expansion gives the exact remainder
\[
\begin{aligned}
 \lambda(K_1^3+K_2^3-2K_3^3)-(xz^2+\beta y^2z+Ax^3) =xy(12\lambda\sigma\eta z+12\lambda\sigma\tau\eta x
                +6\lambda\tau\eta^2y).
\end{aligned}
\]
Again $\tens{T}$ is three cubes plus a product of three linear forms.

Finally suppose that $c=L=0$.  Then
$\tens{T}=t\tens{Q}(x,y)+zR(x,y)+P(x,y)$.
If $\tens{Q},R$ are dependent, the form is ternary after a linear change.
Otherwise their real pencil has exactly one of the three normal forms
\[
 \langle x^2,y^2\rangle,\ \ 
 \langle x^2,xy\rangle,\ \ 
 \langle x^2-y^2,2xy\rangle.
\]
Here a change of basis in the pencil is induced by an invertible linear change
of the variables $t,z$.
Indeed, represent $\tens{Q}$ and $R$ by real symmetric $2\times2$ matrices
$\matr{A}$ and $\matr{B}$.  The rank-one locus in
$\mathbb{P}(\Sym^2(\RR^2))$ is a nondegenerate conic and contains no
projective line.  Hence $\det(s\matr{A}+t\matr{B})$ is a nonzero binary
quadratic, so it has two
distinct real zeros, one double real zero, or no real zero.  Sylvester
congruence, followed by a change of basis
in the pencil, gives the three displayed normal forms, respectively.  More
explicitly, in the first case the two singular members have distinct kernels
and reduce to $x^2$ and $y^2$.  In the double-root case, put the unique
rank-one member in the form $x^2$; the double-root condition removes the
$y^2$ coefficient of an independent member, which then reduces modulo $x^2$
to $xy$.  In the last case, the determinant restricts to a definite form on
the pencil.  Since the determinant form on $\Sym^2(\RR^2)$ has signature
$(1,2)$, this restriction is negative definite; its orthogonal complement is
a positive line generated by a definite quadratic form.  Sending that form
to $x^2+y^2$ by congruence identifies its determinant-orthogonal complement
with the trace-free plane $\langle x^2-y^2,2xy\rangle$.
In the first case,
$$\Sym^3\langle x,y\rangle=x^2\langle x,y\rangle+y^2\langle x,y\rangle,$$ 
so an invertible triangular change of $t,z$ absorbs $P$ and writes the result as
$x^2M+y^2N$, which costs six cubes.  In the second case, reduce to
$t'x^2+z'xy+ay^3$, using
$\Sym^3\langle x,y\rangle=x^2\langle x,y\rangle+xy\langle x,y\rangle+\RR y^3$.
The first term costs three cubes.  Write $xz'=u^2-v^2$ and choose
$b,d\ne0$ with $b^2-d^2=3a$; then
\[
 y(u^2-v^2+ay^2)
 =\frac{(by+u)^3+(by-u)^3}{6b}
  -\frac{(dy+v)^3+(dy-v)^3}{6d}.
\]
In the no-real-root case,
$\Sym^3\langle x,y\rangle=(x^2-y^2)\langle x,y\rangle+2xy\langle x,y\rangle$,
so an invertible triangular change of $t,z$ absorbs $P$ and reduces the form to
$\tens{T}=t(x^2-y^2)+2zxy$.
For $\theta_k=2\pi k/7$ one has the real identity
\[
 \tens{T}=\frac4{21}\sum_{k=0}^{6}
 \bigl(x\cos\theta_k+y\sin\theta_k
       +t\cos2\theta_k+z\sin2\theta_k\bigr)^3.
\]
Indeed, put
$A_k=x\cos\theta_k+y\sin\theta_k$ and
$B_k=t\cos2\theta_k+z\sin2\theta_k$.  Seven-point Fourier
orthogonality eliminates all nonzero frequencies in
$(A_k+B_k)^3$, while the constant term of $3A_k^2B_k$ is
$3\tens{T}/4$.  The rank-two tangent case is exhausted, and the
result follows.
\end{proof}

\begin{remark}
Write $M_{3,\mathbb C}(4)$ for the analogous maximum over $\mathbb C$.
Segre's classical result gives $M_{3,\mathbb C}(4)=7$
\cite[Section~97]{segre1942nonsingular}; see also the modern survey
\cite[Section~3.2]{deparis2018maximum}.  Over $\RR$, the only typical ranks
of quaternary cubics are five and six
\cite[Theorem~1.2(2)]{bernardi2018typical}, while the general maximum-rank
theorem of Blekherman and Teitler gives only $\Mthree(4)\le10$
\cite[Theorems~2 and~3]{blekherman2015maximum}.  Seigal exhibited real
quaternary cubics of Waring rank seven, including
$x_1(x_1x_2+x_3^2+x_4^2)$ \cite[Proposition~4.3]{seigal2020ranks}.
Thus \cref{lem:four-variable-seven} is sharp and yields
$\Mthree(4)=7$; to the best of our knowledge, this exact real value has not
previously been recorded.
\end{remark}

\subsubsection{Low wedge rank}

The next square-covering facts supply the only real input required for
partialization.

Here a partial CP decomposition of \(\tens{T}\) is called minimum if its
length is \(r(\tens{T})\).

\begin{lemma}[Square covers in three variables]
\label{lem:square-covers-three}
Let $\set{W}\cong\RR^3$ and $\set{S}\subseteq\Sym^2\set{W}$.
If $\dim \set{S}\le2$, then $\set{S}$ is contained in the span of four real squares.
If $\dim \set{S}\le4$, then $\set{S}$ is contained in the span of five real squares.
\end{lemma}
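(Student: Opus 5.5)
The second assertion follows at once from \cref{lem:codimension-two-square-cover}. Here $N=\dim\Sym^2\set{W}=6$, so $\dim\set{S}\le4=N-2$, and that lemma gives $N-1=5$ vectors whose squares span a space containing $\set{S}$. Hence all the work lies in the first assertion, where we must beat the generic bound by one square.

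For $\dim\set{S}\le2$ I will dualize, as in the proof of \cref{lem:codimension-two-square-cover}. Enlarge $\set{S}$ to dimension exactly two if necessary; then $\set{S}^{\perp}\subseteq\Sym^2\set{W}^*$ is a four-dimensional space of ternary quadratic forms. Take four vectors $\vect{v}_1,\dots,\vect{v}_4\in\set{W}$ spanning distinct lines, no three of which are coplanar, i.e.\ four points of $\mathbb P(\set{W})$ in general position. Such points impose independent conditions on conics. So $\set{K}=\Span\{\vect{v}_i^2\}$ is four-dimensional, and $\set{K}^{\perp}$ is exactly the two-dimensional pencil of quadratic forms vanishing at the four points. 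Since $\set{S}\subseteq\set{K}$ is equivalent to $\set{K}^{\perp}\subseteq\set{S}^{\perp}$, it suffices to find a pencil $\set{B}\subseteq\set{S}^{\perp}$ whose common real zero set contains four points in general position.

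To build $\set{B}$, I first pick $\tens{Q}\in\set{S}^{\perp}$ whose real zero conic is large. The determinant is a cubic on the real projective space $\mathbb P(\set{S}^{\perp})\cong\mathbb P^3$. The space of ternary forms is six-dimensional and the positive definite cone is open, so $\set{S}^{\perp}$ cannot consist only of definite and zero forms: a subspace all of whose nonzero elements are definite has dimension at most two for $3\times3$ symmetric matrices. Hence $\set{S}^{\perp}$ contains an indefinite form. If some indefinite $\tens{Q}$ is nondegenerate, its real zero set is a smooth real conic with infinitely many points, no three collinear. Otherwise every indefinite member is degenerate of rank two, so $\tens{Q}=\lambda\mu$ is a pair of distinct real lines; this case is handled separately below.

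Next, fix two distinct real points $p_1,p_2$ on $\{\tens{Q}=0\}$. The forms in $\set{S}^{\perp}$ vanishing at both have dimension at least $4-2=2$, so there is $\tens{P}\in\set{S}^{\perp}$, independent of $\tens{Q}$, with $\tens{P}(p_1)=\tens{P}(p_2)=0$. On the real conic, parametrized by $\mathbb P^1(\RR)$, the restriction of $\tens{P}$ is a binary quartic with the two real roots $p_1,p_2$. Its complementary quadratic factor therefore has real coefficients. I will move $p_1,p_2$ (a two-parameter family) so that this remaining quadratic factor has two real roots $p_3,p_4$ distinct from $p_1,p_2$. Then $\set{B}=\Span\{\tens{Q},\tens{P}\}$ has four distinct real base points on a smooth conic, which are automatically in general position.

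The degenerate case $\tens{Q}=\lambda\mu$ is treated the same way, choosing two points on each line away from the intersection point. General position then follows provided none of the four points is the intersection point.

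I expect the main obstacle to be this reality step: showing that the residual two intersection points of $\tens{P}$ with the real conic can be made real and distinct by varying $p_1,p_2$ (or $\tens{P}$ inside the at least two-dimensional space available). This must also include excluding the case where every admissible $\tens{P}$ is a multiple of $\tens{Q}$ on the conic. First I would argue by continuity: if for $p_1,p_2$ close together the residual factor is definite, then letting $p_2$ approach the other residual roots forces a sign change. Failing that, I would fall back on a short case analysis using real normal forms of the pencil $\set{S}$ itself, in the spirit of the three pencil types used in \cref{lem:four-variable-seven}.
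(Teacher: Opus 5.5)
Your treatment of the second assertion is exactly the paper's: it is \cref{lem:codimension-two-square-cover} with $N=6$.

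For the first assertion there is a genuine gap, and it goes beyond the reality step you flag: the configuration you reduce to need not exist. Take a basis $\vect{e}_1,\vect{e}_2,\vect{e}_3$ of $\set{W}$ and $\set{S}=\Span\{\vect{e}_1^2-\vect{e}_2^2,\ \vect{e}_1\vect{e}_2\}$. Every $\tens{F}\in\set{S}^{\perp}$ satisfies $\tens{F}(\vect{e}_1)=\tens{F}(\vect{e}_2)$ and $\tens{B}_{\tens{F}}(\vect{e}_1,\vect{e}_2)=0$. Hence every such $\tens{F}$ vanishes at the two non-real points $[\vect{e}_1\pm i\vect{e}_2]$. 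Two independent conics sharing four real points in general position have no common component. By B\'ezout they then meet only in those four points, so they cannot also share two non-real zeros. Thus $\set{S}^{\perp}$ contains no pencil of the kind you seek, even though $\set{S}$ lies in the span of the three squares $\vect{e}_1^2,\vect{e}_2^2,(\vect{e}_1+\vect{e}_2)^2$.

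Similarly, take $\set{S}=\Span\{\vect{e}_1^2,\vect{e}_1\vect{e}_2\}$. Every member of $\set{S}^{\perp}$ passes through $[\vect{e}_1]$ and is tangent there to $\Span\{\vect{e}_1,\vect{e}_2\}$, or is singular there. So for every choice of $\tens{Q}$, every $\tens{P}$ restricted to the conic has a double root at $[\vect{e}_1]$, however $p_1,p_2$ are moved. Demanding four squares in general position is therefore too strong, and the fixed-$\tens{Q}$ scheme cannot be completed in general. The continuity sketch also has no content as stated: when the residual quadratic factor is definite, there are no real residual roots for $p_2$ to approach.

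There are also two smaller slips. Excluding definite forms does not by itself produce an indefinite form, since semidefinite singular forms remain; a connectedness argument using $\pm\tens{Q}$ repairs this. And every $4$-dimensional space of ternary forms contains a nondegenerate member, so your degenerate branch is vacuous.

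The paper avoids duality and general position altogether and works directly in $\set{S}$.
\begin{itemize}
\item If every element of $\set{S}$ is singular, spectral decompositions of two basis elements of rank at most two give four squares.
\item Otherwise, take a basis $\matr{B},\matr{C}$ with $\matr{B}$ invertible. The real cubic $\det(t\matr{B}+\matr{C})$ has a real root, giving a singular $\matr{M}\in\set{S}$ independent of $\matr{B}$.
\item If $\rank\matr{M}=1$, one square for $\matr{M}$ and three for $\matr{B}$ suffice.
\item If $\rank\matr{M}=2$, put $\set{W}_0=\operatorname{im}\matr{M}$ and choose $\vect{u}\in\set{W}_0$ with $q_1(\vect{u})=\langle\vect{u},\matr{M}_{\set{W}_0}^{-1}\vect{u}\rangle\ne0$ and $q_2(\vect{u})=\langle\vect{u},\matr{B}^{-1}\vect{u}\rangle\ne0$. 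The second polynomial is nonzero on $\set{W}_0$ because a two-plane cannot be totally isotropic for a nondegenerate ternary form.
\item The rank-one update formula makes $\matr{M}-q_1(\vect{u})^{-1}\vect{u}^2$ of rank one and $\matr{B}-q_2(\vect{u})^{-1}\vect{u}^2$ of rank two. So the shared square $\vect{u}^2$ plus $1+2$ further squares cover $\set{S}$.
\end{itemize}
All reality issues reduce to a real root of a real cubic and a real non-zero of a nonzero polynomial. The resulting four vectors need not be in general position, which is exactly the flexibility the examples above require.
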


\begin{proof}
For the first assertion, only $\dim \set{S}=2$ requires attention.  If every
matrix in $\set{S}$ is singular, spectral decompositions of two basis matrices
use at most four squares.  Otherwise choose a basis
$\matr{B},\matr{C}$ of $\set{S}$ with $\matr{B}$ invertible.
The real cubic $\det(t\matr{B}+\matr{C})$ has a real root, so $\set{S}$ contains a
singular matrix $\matr{M}$ independent of $\matr{B}$.  Rank one is immediate.  If $\rank\matr{M}=2$,
write $\set{W}_0=\operatorname{im}\matr{M}$ and choose $\vect{u}\in\set{W}_0$ such that
\[
 q_1(\vect{u})=\langle\vect{u},\matr{M}_{\set{W}_0}^{-1}\vect{u}\rangle\ne0,\ \ 
 q_2(\vect{u})=\langle\vect{u},\matr{B}^{-1}\vect{u}\rangle\ne0.
\]
The first quadratic polynomial is nonzero because
$\matr{M}_{\set{W}_0}$ is invertible.  If the second vanished identically
on $\set{W}_0$, polarization would make the two-plane $\set{W}_0$
totally isotropic for $\matr{B}^{-1}$, so
$\set{W}_0\subseteq\set{W}_0^{\perp_{\matr{B}^{-1}}}$, whereas
$\dim\set{W}_0^{\perp_{\matr{B}^{-1}}}=3-2=1$.  Their product is a nonzero real
polynomial, so such $\vect{u}$ exists.  The rank-one update formula gives
\[
 \rank(\matr{M}-q_1(\vect{u})^{-1}\vect{u}^2)=1,\ \ 
 \rank(\matr{B}-q_2(\vect{u})^{-1}\vect{u}^2)=2.
\]
One common square and spectral decompositions of the two residuals give
four squares. 
The second assertion follows from \cref{lem:codimension-two-square-cover},
since $\dim\Sym^2\set{W}=6$.

\end{proof}

\begin{lemma}[Low-wedge partialization]
\label{lem:low-wedge-partialization}
Let $\tens{T}\in\set{A}\otimes\Sym^2\set{V}$ have a minimum CP decomposition
$\tens{T}=\sum_{i=1}^{r}\vect{a}_i\otimes\vect{b}_i\otimes\vect{c}_i.$ 
If $\rho=\dim\Span\{\vect{b}_i\wedge\vect{c}_i\}\le2$, then $\tens{T}$ has a minimum
partial decomposition of the same length,
\[
 \tens{T}=\sum_{i=1}^{r}\vect{u}_i\otimes\vect{\ell}_i^{\otimes2}.
\]
\end{lemma}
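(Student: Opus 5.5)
The plan is to mimic the proof of \cref{lem:direction-group-partialization}: split the index set by whether the $bc$-wedge vanishes, rewrite the non-partial block with at most as many partial terms, and conclude with \cref{lem:minimal-sublist-replacement}. Put $\set{I}_0=\{i:\vect{b}_i\wedge\vect{c}_i=\vect{0}\}$ and $\set{J}=\{1,\dots,r\}\setminus\set{I}_0$. For $i\in\set{I}_0$ we have $\vect{c}_i=\mu_i\vect{b}_i$, so each such term is already of the form $(\mu_i\vect{a}_i)\otimes\vect{b}_i^{\otimes2}$.

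It remains to treat $\tens{T}_{\set{J}}=\sum_{i\in\set{J}}\vect{a}_i\otimes\vect{b}_i\otimes\vect{c}_i$. Since $\tens{T}$ and the $\set{I}_0$-part are symmetric in the last two modes, so is $\tens{T}_{\set{J}}$. This gives $\sum_{i\in\set{J}}\vect{a}_i\otimes(\vect{b}_i\wedge\vect{c}_i)=\tens{0}$. Choose a basis $\vect{\Omega}_1,\dots,\vect{\Omega}_\rho$ of the wedge span and write $\vect{b}_i\wedge\vect{c}_i=\sum_k x_{ik}\vect{\Omega}_k$. We obtain $\rho$ relations $\sum_{i\in\set{J}}x_{ik}\vect{a}_i=\vect{0}$. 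Their coefficient vectors $(x_{ik})_i$ are linearly independent, because the wedges span a $\rho$-dimensional space. Hence
\[
\dim\Span\{\vect{a}_i:i\in\set{J}\}\le|\set{J}|-\rho .
\]
Let $\set{H}=\sum_{i\in\set{J}}\Span\{\vect{b}_i,\vect{c}_i\}$, so that $\tens{T}_{\set{J}}\in\set{A}\otimes\Sym^2\set{H}$. Let $\set{S}\subseteq\Sym^2\set{H}$ be the image of the contraction of $\tens{T}_{\set{J}}$ in the first mode. Then $\dim\set{S}\le|\set{J}|-\rho$.

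The key reduction is this: if $\set{S}\subseteq\Span\{\vect{v}_1^2,\dots,\vect{v}_k^2\}$, then expanding $\tens{T}_{\set{J}}$ in a basis of $\set{A}$ gives $\tens{T}_{\set{J}}=\sum_{j=1}^k\vect{u}_j\otimes\vect{v}_j^{\otimes2}$ with $\vect{u}_j\in\set{A}$. So it suffices to cover $\set{S}$ by at most $|\set{J}|$ real squares. The cases are as follows.
\begin{enumerate}
\item $\rho=0$: then $\set{J}=\emptyset$ and there is nothing to do.
\item $\rho=1$: we have $\dim\set{H}=2$ and $\Sym^2\set{H}$ is spanned by three squares. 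Moreover $|\set{J}|\ge2$, since a single relation with a nonzero $\vect{a}_i$ needs at least two terms. If $|\set{J}|=2$, then $\dim\set{S}\le1$ and the spectral theorem gives at most two squares.
\item $\rho=2$ with $\nu=2$: the argument of \cref{lem:direction-group-partialization} applies verbatim to the two direction groups.
\item $\rho=2$ with $\nu\ge3$: \cref{lem:plucker-support-saving} gives $\dim\set{H}\le3$. If $\dim\set{H}\le2$, three squares suffice, and $|\set{J}|\ge3$ because $|\set{J}|=2$ would force $\dim\Span\{\vect{a}_i\}\le0$. If $\dim\set{H}=3$, we use $|\set{J}|\ge\dim\set{S}+2$. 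When $\dim\set{S}\le1$, one quadric of rank at most $3$ needs at most $3$ squares. When $\dim\set{S}=2$, \cref{lem:square-covers-three} gives $4$ squares. When $\dim\set{S}\in\{3,4\}$, it gives $5$ squares. When $\dim\set{S}\ge5$, the six basis squares of $\Sym^2\set{H}$ suffice. In every case the count is at most $|\set{J}|$.
\end{enumerate}

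Finally, concatenate the resulting decomposition of $\tens{T}_{\set{J}}$ with the $\set{I}_0$ terms. \Cref{lem:minimal-sublist-replacement} shows the length is exactly $r$ and the decomposition is minimum. The main obstacle I expect is the case $\rho=2$, $\dim\set{H}=3$. There the bookkeeping $\dim\set{S}\le|\set{J}|-2$ must be matched precisely against the real square-covering numbers. The genuinely real input is the four-square cover of a $2$-dimensional pencil and the five-square cover in codimension two from \cref{lem:square-covers-three}. I would also double-check that the two relations are truly independent, which follows from linear independence of the $\vect{\Omega}_k$.
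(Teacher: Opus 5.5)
Your proposal is correct and follows essentially the paper's proof. You split off the zero-wedge terms, use the $\rho$ independent alternating relations to get $\dim\set{S}\le|\set{J}|-\rho$, cover $\set{S}$ by at most $|\set{J}|$ real squares (three in the binary case, and the four- and five-square covers of \cref{lem:square-covers-three} in the ternary case), and finish with \cref{lem:minimal-sublist-replacement}. The only differences are cosmetic: you split $\rho=2$ by $\nu=2$ versus $\nu\ge3$ using \cref{lem:plucker-support-saving} rather than by disjoint versus meeting supports, and you case on $\dim\set{S}$ directly instead of first bounding $m\le6$ by minimality (your subcase $\dim\set{H}\le2$ with $\rho=2$ is vacuous).
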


\begin{proof}
Zero-wedge terms are already partial.  Suppose first that $\rho=1$.
Let \(\set{I}\) be the set of nonzero-wedge indices and \(m=|\set{I}|\).  All their
wedges have the same two-dimensional support.  Their alternating relation
makes the subtotal \(\tens{T}_{\set{I}}\) symmetric in its last two modes (the
complementary zero-wedge terms are symmetric termwise), so a
three-square basis of the binary quadratic space gives a partial
decomposition of \(\tens{T}_{\set{I}}\) with at most three terms.
\Cref{lem:minimal-sublist-replacement} gives \(m\le3\).
The alternating relation excludes $m=1$.  If $m=2$, the first factors
are proportional, so the subtotal is $\vect{u}\otimes\tens{Q}$ with
$\rank\tens{Q}\le2$; spectral decomposition gives at most two partial
terms.  If $m=3$, use the three-square basis above.  In both cases,
\cref{lem:minimal-sublist-replacement} forces exactly $m$ terms.

Now let $\rho=2$ and choose two observed wedges as a basis.  If their
supports are disjoint, the identity
$ (a\vect{\omega}_1+b\vect{\omega}_2)^{\wedge2}
=2ab\,\vect{\omega}_1\wedge\vect{\omega}_2 $
shows that the only decomposable directions in their span are the two
basis directions.  For $j=1,2$, let $\set{I}_j$ be the set of indices such that
$\vect{b}_i\wedge\vect{c}_i=\lambda_i\vect{\omega}_j$ for some
$\lambda_i\in\RR^\times$.  Since $\vect{\omega}_1,\vect{\omega}_2$ are independent,
the alternating relation gives
$\sum_{i\in\set{I}_j}\lambda_i\vect{a}_i=\vect{0}$ for $j=1,2$.
Thus each group subtotal is symmetric in its last two modes, and the
preceding rank-one-wedge argument applies separately to the two groups.

If the supports meet, all nonzero-wedge factors lie in a
three-dimensional space \(\set{W}\).  Let \(\set{I}\) contain all nonzero-wedge indices,
put \(m=|\set{I}|\), and let \(\tens{T}_{\set{I}}\) be their subtotal.  The two independent
coordinate rows of the wedge matrix give two independent relations among
the first factors \(\vect{a}_i\), \(i\in\set{I}\).  Hence
\(\dim\Span\{\vect{a}_i:i\in\set{I}\}\le m-2\).  They also make
\(\tens{T}_{\set{I}}\in\set{A}\otimes\Sym^2\set{W}\), because the complementary zero-wedge
terms are symmetric in the last two modes.  Therefore,
$\dim\set{S}_{\tens{T}_{\set{I}}}\le m-2$,
where \(\set{S}_{\tens{T}_{\set{I}}}\subseteq\Sym^2\set{W}\) is the image of the
contraction map 
$$\set{A}^*\to\Sym^2\set{W},\ \alpha\mapsto(\alpha\otimes\operatorname{id}^{\otimes2})(\tens{T}_{\set{I}}).$$
Six standard squares span \(\Sym^2\set{W}\), so
\cref{lem:minimal-sublist-replacement} first gives \(m\le6\).
If \(m=6\), then \(\dim\set{S}_{\tens{T}_{\set{I}}}\le4\), and the five-square
part of \cref{lem:square-covers-three} contradicts minimality.  Since
\(\rho=2\), one has \(m\ge2\), and \(m=2\) is impossible: the two wedges are
independent, so the alternating relation forces both first factors to
vanish.  For \(m=3,4,5\), respectively,
spectral decomposition, the four-square cover, and the five-square cover
give a partial replacement with at most \(m\) terms.
Applying \cref{lem:minimal-sublist-replacement} and adjoining the unchanged
zero-wedge terms yields a minimum partial decomposition of length \(r\).
\end{proof}

\begin{proposition}[The remaining low-wedge partial formats]
\label{lem:partial-low-second-wedge}
Let $\tens{T}$ be a nonzero concise symmetric cubic with a minimum partial
decomposition
\[
 \tens{T}=\sum_{i=1}^{r}\vect{a}_i\otimes\vect{\ell}_i^{\otimes2},\ \ 
 \vect{a}_i\wedge\vect{\ell}_i\ne\vect{0}.
\]
If $\dim\Span\{\vect{a}_i\wedge\vect{\ell}_i\}\le2$, then
\[
 (d(\tens{T}),r(\tens{T}))\in\{(2,3),(3,5),(4,6)\},\ \  s(\tens{T})=r(\tens{T}).
\]
\end{proposition}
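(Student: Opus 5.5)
The plan is to read off the shape of $\tens{T}$ from the alternating relation of the partial decomposition, and then cap the length by the classical maximum ranks $\Mthree(2)=3$ and $\Mthree(3)=5$. Put $\vect{p}_i=\vect{a}_i\wedge\vect{\ell}_i\ne\vect{0}$ and $\rho=\dim\Span\{\vect{p}_i\}\in\{1,2\}$; note $\rho\ge1$ since every $\vect{p}_i$ is nonzero. Symmetry in the first two modes gives $\sum_i\vect{p}_i\otimes\vect{\ell}_i=\tens{0}$, exactly as in \cref{lem:partial-residual-bound}. Since every $\vect{p}_i$ is nonzero, the support hull of the wedge family contains all $\vect{a}_i$ and $\vect{\ell}_i$. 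By conciseness and \cref{lem:minimum-cpd}, this hull is therefore $\Ess(\tens{T})=\set{V}$. Throughout, \cref{lem:minimal-sublist-replacement} guarantees that every subtotal of the decomposition has CP rank equal to its number of terms.

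\emph{Case $\rho=1$.} All wedges equal $\lambda_i\vect{\omega}$ with $\lambda_i\ne0$, so $\set{V}=\operatorname{supp}(\vect{\omega})$ and $d=2$. The relation becomes $\sum_i\lambda_i\vect{\ell}_i=\vect{0}$ with all $\lambda_i\ne0$. A single term would force $\vect{\ell}_1=\vect{0}$, and two terms would make the $\vect{\ell}_i$ proportional, contradicting $d=2$; hence $r\ge3$. On the other hand $r\le s\le\Mthree(2)=3$, so $(d,r)=(2,3)$ and $s=r$.

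\emph{Case $\rho=2$, disjoint supports.} Choose two observed wedges $\vect{\omega}_1,\vect{\omega}_2$ as a basis. As in \cref{lem:low-wedge-partialization}, the identity $(a\vect{\omega}_1+b\vect{\omega}_2)^{\wedge2}=2ab\,\vect{\omega}_1\wedge\vect{\omega}_2$ shows that every $\vect{p}_i$ is proportional to $\vect{\omega}_1$ or to $\vect{\omega}_2$. Let $\set{I}_1,\set{I}_2$ be the corresponding groups and $\set{E}_j=\operatorname{supp}(\vect{\omega}_j)$, so $\set{V}=\set{E}_1\oplus\set{E}_2$ and $d=4$. Independence of $\vect{\omega}_1,\vect{\omega}_2$ splits the relation into $\sum_{i\in\set{I}_j}\lambda_i\vect{\ell}_i=\vect{0}$ for $j=1,2$. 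Each group subtotal $\tens{T}_j$ is then symmetric in modes $1,2$, and it is already symmetric in modes $2,3$. Hence $\tens{T}_j\in\Sym^3(\set{E}_j)$ is a binary cubic, and it is concise because the $\vect{\ell}_i$ with $i\in\set{I}_j$ cannot be proportional. The $\rho=1$ argument applied to each $\tens{T}_j$, together with \cref{lem:minimal-sublist-replacement}, gives $|\set{I}_j|=r(\tens{T}_j)=3$. Therefore $r=6$ and $s\le s(\tens{T}_1)+s(\tens{T}_2)\le6$, giving $(4,6)$ with $s=r$.

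\emph{Case $\rho=2$, meeting supports.} Now $\set{V}$ is the sum of two planes meeting in a line, so $d\le3$. Moreover $d=2$ would put every wedge in the one-dimensional space $\bigwedge^2\set{V}$, contradicting $\rho=2$, so $d=3$. Taking two independent coordinate rows of the wedge matrix in the relation gives two independent linear relations among the $\vect{\ell}_i$. Hence $e=r-d\ge2$ and $r\ge5$. Combined with $r\le s\le\Mthree(3)=5$, this gives $(3,5)$ with $s=r$.

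The step I expect to need the most care is the disjoint-support case. There one must check that each group subtotal is genuinely fully symmetric, hence a concise binary cubic, and that minimality transfers to the groups so that each group has exactly three terms. Once the group relation $\sum_{i\in\set{I}_j}\lambda_i\vect{\ell}_i=\vect{0}$ is in hand, the rest is bookkeeping with \cref{lem:minimal-sublist-replacement} and the values $\Mthree(2)=3$, $\Mthree(3)=5$.
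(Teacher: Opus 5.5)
Your proposal is correct and follows essentially the same route as the paper. In both, the alternating relation $\sum_i(\vect{a}_i\wedge\vect{\ell}_i)\otimes\vect{\ell}_i=\tens{0}$ together with the spanning property of minimum decompositions identifies the support hull with $\Ess(\tens{T})$, and the three cases ($\rho=1$; $\rho=2$ with meeting supports; $\rho=2$ with disjoint supports) are closed by $\Mthree(2)=3$, by $\Mthree(3)=5$, and by splitting $\tens{T}$ into two binary cubics. The differences are cosmetic: you get the lower bounds on $r$ by counting terms within each group rather than from the kernel bound $\rho\le r-d$, and you build the binary blocks from the split group relations rather than from the projections $\pi_j^{\otimes3}$.
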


\begin{proof}
Symmetry in the first two modes gives
$\sum_{i=1}^{r}(\vect{a}_i\wedge\vect{\ell}_i)\otimes\vect{\ell}_i=\tens{0}$.
Let $\matr{\Omega}$ and $\matr{L}$ be the matrices whose columns are the
coordinate vectors of $\vect{a}_i\wedge\vect{\ell}_i$ and $\vect{\ell}_i$,
respectively.  Then $\matr{\Omega}\matr{L}^{\mathsf T}=\matr{0}$.
Since the decomposition is minimum, its factor lists span $\Ess(\tens{T})$,
so $\rank\matr{L}=d$ and hence
$\dim\Span\{\vect{a}_i\wedge\vect{\ell}_i\}\le r-d$.
If the wedge span has dimension one, all factors lie in its common support
plane $\set{P}$.  The observed support and the factor-list spanning property give
$\Ess(\tens{T})=\set{P}$.  Thus $d=2$ and $r\ge3$, while every real binary cubic
has symmetric rank at most three.  Hence $r=s=3$.

If the wedge span has dimension two, let $\set{P}_1,\set{P}_2$ be the supports of two
basis wedges.  When they meet, their sum is three-dimensional, all factors
lie in $\set{P}_1+\set{P}_2$, and both observed supports lie in
$\Ess(\tens{T})$.  Hence $\Ess(\tens{T})=\set{P}_1+\set{P}_2$, so $d=3$ and
$5\le r\le s\le \Mthree(3)=5$.  When they are disjoint, the
exterior square of $x\vect{\omega}_1+y\vect{\omega}_2$ is
$2xy\vect{\omega}_1\wedge\vect{\omega}_2$, with
$\vect{\omega}_1\wedge\vect{\omega}_2\ne\vect{0}$.  Thus decomposability forces
$xy=0$, so every observed wedge lies on one of the two basis directions.
Accordingly, every summand belongs to either $\set{P}_1^{\otimes3}$ or
$\set{P}_2^{\otimes3}$.  Since both directions occur and each factor list spans
$\Ess(\tens{T})$, one has $\Ess(\tens{T})=\set{P}_1\oplus\set{P}_2$ and hence
$d=4$.  Let $\pi_j:\Ess(\tens{T})\to\set{P}_j$ be the projections associated
with this direct sum, and put $\tens{T}_j=\pi_j^{\otimes3}\tens{T}$.  Symmetry gives
$\tens{T}_j\in\Sym^3\set{P}_j$, and the direction partition gives
$\tens{T}=\tens{T}_1+\tens{T}_2$.  Therefore
$6\le r\le s\le3+3=6$.
\end{proof}

\subsubsection{Saturated descent}

We first isolate the finite-dimensional direction configurations used in
dimensions five and six.

\begin{lemma}[Four bivectors in a three-dimensional wedge space]
\label{lem:four-bivector-configuration}
Let \(\set{L}\subseteq\bigwedge^2\set{V}\) have dimension three.
\begin{enumerate}[label=\textup{(\roman*)}]
\item Let $\vect{\omega}_1,\ldots,\vect{\omega}_4\in\set{L}$ be nonzero
decomposable bivectors such that every three are linearly independent.  Then
$\dim\sum_{i=1}^4\operatorname{supp}(\vect{\omega}_i)\le4$.
Consequently, the support of every nonzero decomposable bivector in $\set{L}$
is contained in a common subspace of dimension at most four.
\item Let $\set{S}$ be a finite set of distinct one-dimensional subspaces of
$\set{L}$ that spans $\set{L}$.  If no four members have the property that
every three span $\set{L}$, then either $|\set{S}|=3$ or all but one member
of $\set{S}$ lie in a common two-dimensional subspace of $\set{L}$.
\end{enumerate}
\end{lemma}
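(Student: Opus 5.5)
The plan is to reduce part (i) to exterior-algebra identities among the $\vect{\omega}_i$, and to treat part (ii) as an elementary statement about finite point sets in the projective plane $\mathbb{P}(\set{L})\cong\mathbb{P}^2$.

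\emph{Part (i).} Write $\set{E}_i=\operatorname{supp}(\vect{\omega}_i)$. Since $\vect{\omega}_1,\vect{\omega}_2,\vect{\omega}_3$ form a basis of $\set{L}$, write $\vect{\omega}_4=x_1\vect{\omega}_1+x_2\vect{\omega}_2+x_3\vect{\omega}_3$. The condition that every three are independent forces $x_1x_2x_3\ne0$.
\begin{itemize}
\item Put $\vect{\beta}=x_1\vect{\omega}_1+x_2\vect{\omega}_2=\vect{\omega}_4-x_3\vect{\omega}_3$. Using $\vect{\omega}_i\wedge\vect{\omega}_i=\vect{0}$ and evaluating $\vect{\beta}\wedge\vect{\beta}$ in the two ways gives $2x_1x_2\,\vect{\omega}_1\wedge\vect{\omega}_2=-2x_3\,\vect{\omega}_4\wedge\vect{\omega}_3$. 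The same holds for every splitting of $\{1,2,3,4\}$ into two pairs. Hence $\vect{\omega}_i\wedge\vect{\omega}_j$ is a nonzero multiple of the complementary product $\vect{\omega}_k\wedge\vect{\omega}_l$.
\item If some product, say $\vect{\omega}_1\wedge\vect{\omega}_2$, is nonzero, it is a decomposable $4$-vector with support $\set{E}_1\oplus\set{E}_2$. The proportional $4$-vector $\vect{\omega}_3\wedge\vect{\omega}_4$ has support $\set{E}_3\oplus\set{E}_4$, so the two supports coincide. All four planes then lie in this $4$-dimensional space.
\item Otherwise all six products vanish, so the four planes pairwise meet. Pairwise meeting distinct planes either lie in a common $3$-space, which already gives the bound, or share a common line $\RR\vect{\ell}$. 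In the second case write $\vect{\omega}_i=\vect{\ell}\wedge\vect{v}_i$. Then $\vect{\omega}_4=\vect{\ell}\wedge(\sum x_i\vect{v}_i)$, so $\vect{v}_4\in\Span\{\vect{\ell},\vect{v}_1,\vect{v}_2,\vect{v}_3\}$ modulo $\vect{\ell}$, and the sum of supports has dimension at most $4$.
\end{itemize}
The step needing most care is the common-line-versus-common-$3$-space dichotomy. For it I would use the standard fact that three pairwise intersecting planes with no common line span a $3$-space, and then check that the fourth plane, meeting all three, stays inside it, or else the configuration reverts to the common-line case. For the consequence, $\set{L}=\Span\{\vect{\omega}_1,\vect{\omega}_2,\vect{\omega}_3\}\subseteq\bigwedge^2(\set{E}_1+\cdots+\set{E}_4)$. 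By the support remark before \cref{lem:wedge-support-bounds}, every decomposable element of $\set{L}$ has support inside this space of dimension at most $4$.

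\emph{Part (ii).} View $\set{S}$ as a finite spanning set of points in $\mathbb{P}^2$; the hypothesis says that $\set{S}$ contains no four points with no three collinear. Since $\set{S}$ spans $\set{L}$, it has $|\set{S}|\ge3$. Assume $|\set{S}|\ge4$, and let $\Lambda$ be a line containing the maximum number of points of $\set{S}$.
\begin{itemize}
\item If at most one point lies off $\Lambda$, we are in the second alternative.
\item Otherwise pick two points $p,q\notin\Lambda$. The line $pq$ meets $\Lambda$ in at most one point. Because $\set{S}$ spans, $\Lambda$ carries at least two points, and maximality gives $|\Lambda\cap\set{S}|\ge2$.
\item If $\Lambda$ carries at least three points, choose two points $a,b\in\Lambda\cap\set{S}$ off the line $pq$. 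Then $\{a,b,p,q\}$ has no three collinear, a contradiction.
\item If every line carries at most two points of $\set{S}$, then any four points of $\set{S}$ have no three collinear, again a contradiction.
\end{itemize}
This case split is routine; the main obstacle is the degenerate-configuration bookkeeping in part (i).
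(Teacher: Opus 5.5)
Your proposal is correct, but both parts are organized differently from the paper.

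In (i), the paper splits on whether $\set{E}_1,\set{E}_2,\set{E}_3$ meet pairwise.
\begin{itemize}
\item If they do, then $\dim(\set{E}_1+\set{E}_2+\set{E}_3)\le\dim(\set{E}_1+\set{E}_2)+1\le4$. Since $\set{L}=\Span\{\vect{\omega}_1,\vect{\omega}_2,\vect{\omega}_3\}\subseteq\bigwedge^2(\set{E}_1+\set{E}_2+\set{E}_3)$, this already forces $\set{E}_4$ into that space.
\item If not, say $\set{E}_1\cap\set{E}_2=\{\vect{0}\}$, it rescales the dependence to $\vect{\omega}_1+\vect{\omega}_2=\vect{\omega}_4-\vect{\omega}_3$ and compares contraction images. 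The left side has the four-dimensional image $\set{E}_1\oplus\set{E}_2$, while the right side has image inside $\set{E}_3+\set{E}_4$.
\end{itemize}
Your exterior-square identity is the Pl\"ucker version of the same rank comparison: for $\vect{\beta}=x_1\vect{\omega}_1+x_2\vect{\omega}_2$, one has $\vect{\beta}\wedge\vect{\beta}\ne\vect{0}$ exactly when its contraction image is four-dimensional. You use supports of decomposable $4$-vectors in place of contraction images. So your nondegenerate branch is essentially equivalent to the paper's, and it has the small bonus of treating all three pair splittings symmetrically.

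The paper is more economical in the degenerate branch. The concurrent-or-coplanar dichotomy for four planes, which you single out as the delicate step, is not needed. Pairwise meeting of the first three planes alone gives the dimension bound above, and the span argument finishes.

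In (ii), the paper fixes coordinates given by three members of $\set{S}$ and observes that every further member has $xyz=0$. It then uses explicit $3\times3$ determinant checks to show that any member off the plane $z=0$ must be the third coordinate axis. Your extremal-line argument in $\mathbb{P}(\set{L})$ proves the same statement coordinate-free and is arguably more transparent. The paper's version has the merit of being a finite, fully explicit computation.
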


\begin{proof}
% LEAN: four_general_simple_support constructs the four-point circuit and
% its common support; contractionSupport_of_mem_span covers further points.
For \textup{(i)}, put $\set{E}_i=\operatorname{supp}(\vect{\omega}_i)$.
If the first three support planes meet pairwise, put
$\set{H}=\set{E}_1+\set{E}_2+\set{E}_3$.  Then $\dim\set{H}\le4$ and
$\set{L}=\Span\{\vect{\omega}_1,\vect{\omega}_2,\vect{\omega}_3\}
\subseteq\bigwedge^2\set{H}$, so the support of every nonzero decomposable
bivector in $\set{L}$ lies in $\set{H}$.
Otherwise, after relabeling, $\set{E}_1\cap\set{E}_2=\{\vect{0}\}$.
Every coefficient of the unique dependence is nonzero, so rescaling gives
$\vect{\omega}_1+\vect{\omega}_2=\vect{\omega}_4-\vect{\omega}_3$.
Put $\set{H}=\set{E}_1\oplus\set{E}_2$.  For
$\matr{F}_{\vect{\omega}}(\alpha)=\alpha\mathbin{\lrcorner}\vect{\omega}$, one has
$\operatorname{im}\matr{F}_{\vect{\omega}_i}=\set{E}_i$.  Indeed, restriction
$\set{V}^*\to\set{E}_1^*\oplus\set{E}_2^*$ is surjective, while contraction
by the nonzero area form $\vect{\omega}_i$ identifies $\set{E}_i^*$ with
$\set{E}_i$.  The directness of
$\set{E}_1+\set{E}_2$ gives
$\operatorname{im}\matr{F}_{\vect{\omega}_1+\vect{\omega}_2}=\set{H}$.
Hence $\set{H}\subseteq\set{E}_3+\set{E}_4$.  Since $\dim\set{H}=4$ and
$\dim(\set{E}_3+\set{E}_4)\le4$, the two spaces are equal.  Thus all four
supports, and therefore the support of every nonzero decomposable bivector
in their span $\set{L}$, lie in $\set{H}$.

% LEAN: noFourGeneral_iff identifies the determinant condition with triple
% independence; projective_plane_configuration proves the coordinate classification.
For \textup{(ii)}, choose representatives of three members of $\set{S}$ as a
coordinate basis of $\set{L}$.  The hypothesis implies $xyz=0$ for every
additional direction represented by $(x,y,z)$.
If there is no further direction, the assertion holds.  Otherwise, after
permuting coordinates, choose a direction represented by $(a,b,0)$ with
$ab\ne0$.  For a direction represented by $(x,y,z)$ with $z\ne0$, applying
the same three-by-three determinant test shows that $x=0$, $y\ne0$ would
make every three of the four lines through
$(1,0,0),(a,b,0),(0,0,1),(0,y,z)$ independent, while $y=0$, $x\ne0$
would do the same with $(0,1,0)$ in place of $(1,0,0)$.
Since $xyz=0$, one must therefore have $x=y=0$.
Thus every member of $\set{S}$ other than $\RR(0,0,1)$ is represented in
the two-dimensional coordinate subspace $z=0$.
\end{proof}

For \(\tens{T}\in\Sym^3\set{V}\), write
\(\operatorname{pol}_{\tens{T}}:\set{V}^*\to\Sym^2\set{V}\),
\(\alpha\mapsto\alpha\mathbin{\lrcorner}\tens{T}\), and set
\(\set{P}_{\mathrm{pol}}(\tens{T})=
\operatorname{im}(\operatorname{pol}_{\tens{T}})\).
We next record the block principle used in the descent.

\begin{lemma}[Polar blocks]
\label{lem:small-polar-blocks}
Let $\tens{T}\in\Sym^3\set{V}$ be concise and suppose its polar space
has a direct-sum decomposition
\[
 \set{P}_{\mathrm{pol}}(\tens{T})=
 \set{K}_1\oplus\cdots\oplus\set{K}_q,\ \ 
 \set{K}_j\subseteq\Sym^2\set{P}_j,\ \ 
 \set{P}_i\cap\set{P}_j=\{\vect{0}\}\ (i\ne j).
\]
Then there is a direct sum $\set{V}=\set{A}_1\oplus\cdots\oplus\set{A}_q$ such that
\[
 \set{A}_j\subseteq\set{P}_j,\ \ 
 \tens{T}=\tens{T}_1\oplus\cdots\oplus\tens{T}_q,\ \ 
 \tens{T}_j\in\Sym^3\set{A}_j.
\]
\end{lemma}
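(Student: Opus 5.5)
The plan is to split $\tens{T}$ by splitting its polar map, and then to use conciseness together with a dimension count to force the resulting pieces into a direct sum. Let $\pi_j:\set{P}_{\mathrm{pol}}(\tens{T})\to\set{K}_j$ be the projections attached to the given direct sum, and put $\kappa_j=\pi_j\circ\operatorname{pol}_{\tens{T}}:\set{V}^*\to\set{K}_j$. Each $\kappa_j$ is surjective, because $\operatorname{pol}_{\tens{T}}$ maps onto $\set{P}_{\mathrm{pol}}(\tens{T})$. Let $\tens{T}_j\in\set{V}\otimes\Sym^2\set{P}_j$ be the tensor whose first-mode contraction is $\kappa_j$, that is, $\alpha\mathbin{\lrcorner}\tens{T}_j=\kappa_j(\alpha)$. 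Then $\tens{T}=\sum_j\tens{T}_j$ holds by construction. Once each $\tens{T}_j$ is known to be symmetric, we set $\set{A}_j=\Ess(\tens{T}_j)$. This space lies in $\set{P}_j$ because $\set{K}_j\subseteq\Sym^2\set{P}_j$ and the last two factors of $\tens{T}_j$ already lie in $\set{P}_j$.

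The dimension count goes as follows. By \cref{lem:contraction-essential-support}, $\dim\set{A}_j=d(\tens{T}_j)=\rank\kappa_j=\dim\set{K}_j$. Conciseness and $\tens{T}=\sum_j\tens{T}_j$ give $\set{V}=\Ess(\tens{T})\subseteq\sum_j\set{A}_j$. Again by \cref{lem:contraction-essential-support}, $\dim\set{V}=d(\tens{T})=\rank\operatorname{pol}_{\tens{T}}=\sum_j\dim\set{K}_j=\sum_j\dim\set{A}_j$. A sum of subspaces whose dimensions add up to the dimension of the sum is direct, so $\set{V}=\set{A}_1\oplus\cdots\oplus\set{A}_q$. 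This gives all the assertions.

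The main obstacle is the symmetry of each $\tens{T}_j$. The trilinear form $t(\alpha,\beta,\gamma)=\sum_i\kappa_i(\alpha)(\beta,\gamma)$ is symmetric, and each summand is already symmetric in $(\beta,\gamma)$. It therefore suffices to prove that each summand is symmetric in $(\alpha,\beta)$. Consider the vectors $\psi_i(\alpha,\beta)=\beta\mathbin{\lrcorner}\kappa_i(\alpha)\in\set{P}_i$. Symmetry of $\tens{T}$ gives
\[
\sum_i\bigl(\psi_i(\alpha,\beta)-\psi_i(\beta,\alpha)\bigr)=\vect{0}.
\]
When $q=2$, the hypothesis $\set{P}_1\cap\set{P}_2=\{\vect{0}\}$ separates the two components, which yields $\psi_i(\alpha,\beta)=\psi_i(\beta,\alpha)$ and hence the symmetry directly.

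For general $q$ the hypothesis is only pairwise, so I would argue by induction on $q$. The first thing to try is to exploit the direct-sum hypothesis on the $\set{K}_j$ rather than only on the $\set{P}_j$. Fix $\beta$ and consider the quadric $\beta\mathbin{\lrcorner}\tens{T}$. It equals $\operatorname{pol}_{\tens{T}}(\beta)$, so its $\set{K}_j$-component is $\kappa_j(\beta)$. It also equals $\sum_i\beta\mathbin{\lrcorner}\tens{T}_i$ read through the second mode, provided those pieces lie in the respective $\set{K}_i$.

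To get that, I would first show that contracting $\tens{T}_i$ in a non-first mode lands in $\set{K}_i$, by contracting the identity $t(\alpha,\beta,\cdot)=t(\beta,\alpha,\cdot)$ against covectors vanishing on selected $\set{P}_k$. Uniqueness of components in $\bigoplus_j\set{K}_j$ then identifies $\kappa_j(\beta)$ with the second-mode contraction of $\tens{T}_j$, which is exactly the required symmetry. If this step resists, the fallback is to group $\set{K}_2\oplus\cdots\oplus\set{K}_q$ inside $\Sym^2(\set{P}_2+\cdots+\set{P}_q)$ and check whether the pairwise hypothesis, combined with the $\set{K}$-directness, already forces $\set{P}_1\cap(\set{P}_2+\cdots+\set{P}_q)$ to be inessential for $\tens{T}$. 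That would reduce the general case to the clean case $q=2$.
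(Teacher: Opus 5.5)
\textbf{Gap: symmetry of the blocks for $q\ge3$.} Your reduction is sound. Defining $\tens{T}_j$ through $\kappa_j=\pi_j\circ\operatorname{pol}_{\tens{T}}$ gives the same blocks as the paper. Once every $\tens{T}_j$ is known to be symmetric, your dimension count ($\dim\set{V}=\sum_j\dim\set{K}_j=\sum_j\dim\set{A}_j$ together with $\set{V}\subseteq\sum_j\set{A}_j$) is a valid substitute for the paper's projection identities.

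The symmetry of each $\tens{T}_j$ for $q\ge3$ is not proved, however; you leave it as a plan with a first attempt and a fallback. Your $q=2$ argument separates the \emph{values} $\psi_i(\alpha,\beta)\in\set{P}_i$. That requires $\set{P}_1+\cdots+\set{P}_q$ to be direct, and pairwise trivial intersection does not give this once $q\ge3$. Contracting against covectors that vanish on selected $\set{P}_k$ cannot isolate the components either, because $\bigl(\sum_{k\ne i}\set{P}_k\bigr)^\perp$ may be zero. The fallback of grouping $\set{K}_2\oplus\cdots\oplus\set{K}_q$ reintroduces a possibly nonzero intersection $\set{P}_1\cap(\set{P}_2+\cdots+\set{P}_q)$, and you give no argument for why it is harmless.

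\textbf{Missing idea: decompose the arguments, not the values.} Put $\set{D}_j=\operatorname{pol}_{\tens{T}}^{-1}(\set{K}_j)$. Conciseness makes $\operatorname{pol}_{\tens{T}}$ injective, so $\set{V}^*=\set{D}_1\oplus\cdots\oplus\set{D}_q$. For $\alpha\in\set{D}_i$ you then have $\kappa_k(\alpha)=\delta_{ki}\operatorname{pol}_{\tens{T}}(\alpha)$. By bilinearity it suffices to check $\psi_k(\alpha,\beta)=\psi_k(\beta,\alpha)$ for $\alpha\in\set{D}_i$ and $\beta\in\set{D}_j$. Symmetry of $\tens{T}$ gives
\[
w:=\beta\mathbin{\lrcorner}\operatorname{pol}_{\tens{T}}(\alpha)=\alpha\mathbin{\lrcorner}\operatorname{pol}_{\tens{T}}(\beta)\in\set{P}_i\cap\set{P}_j .
\]
If $i=j$, both $\psi_k(\alpha,\beta)$ and $\psi_k(\beta,\alpha)$ equal $\delta_{ki}w$. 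If $i\ne j$, the pairwise hypothesis forces $w=\vect{0}$, so both vanish. This is exactly the paper's key step, and it uses only the pairwise condition, so no induction on $q$ is needed. With this step in place your argument is complete.
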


\begin{proof}
% LEAN: actual_polar_space_block_reconstruction constructs the dual
% decomposition, the actual cubic blocks, and their internal essential sum.
Write $\matr{F}=\operatorname{pol}_{\tens{T}}$ and
$\set{D}_j=\matr{F}^{-1}(\set{K}_j)$.
Conciseness makes $\matr{F}$ injective; since
$\operatorname{im}\matr{F}=\bigoplus_j\set{K}_j$, it follows that
$\set{V}^*=\bigoplus_j\set{D}_j$.
Let $\matr{\Pi}_j$ be the corresponding dual projections and put
$\matr{P}_j=\matr{\Pi}_j^*:\set{V}\to\set{V}$, using $\set{V}^{**}=\set{V}$.
For $\alpha\in\set{D}_i$, $\beta\in\set{D}_j$, and $i\ne j$, symmetry gives
\[
 \beta\mathbin{\lrcorner}\matr{F}(\alpha)
 =\alpha\mathbin{\lrcorner}\matr{F}(\beta)
 \in\set{P}_i\cap\set{P}_j=\{\vect{0}\}.
\]
Expanding arbitrary $\alpha,\beta$ along $\bigoplus_j\set{D}_j$, we obtain
$\beta\mathbin{\lrcorner}\matr{F}(\matr{\Pi}_j\alpha)
 =(\matr{\Pi}_j\beta)\mathbin{\lrcorner}\matr{F}(\alpha).$
Thus applying $\matr{P}_j$ in the first or the second tensor factor gives
the same tensor; symmetry allows it to be moved to the third factor as well.
It follows that
\[
 \tens{T}_j=(\matr{P}_j\otimes\operatorname{id}\otimes\operatorname{id})\tens{T}
 \in\Sym^3\set{V},\ \
 \operatorname{pol}_{\tens{T}_j}=\matr{F}\matr{\Pi}_j,\ \
 \tens{T}=\sum_j\tens{T}_j.
\]
Since $\matr{F}$ is injective, $\ker(\matr{F}\matr{\Pi}_j)=\ker\matr{\Pi}_j$.
By \cref{lem:contraction-essential-support}, we have
\[
 \set{A}_j:=\Ess(\tens{T}_j)
 =(\ker\matr{\Pi}_j)^\perp=\operatorname{im}\matr{P}_j.
\]
The identities $\sum_j\matr{P}_j=\operatorname{id}$ and
$\matr{P}_i\matr{P}_j=\delta_{ij}\matr{P}_j$ give $\set{V}=\bigoplus_j\set{A}_j$.
Moreover, $\operatorname{im}(\matr{F}\matr{\Pi}_j)
=\matr{F}(\set{D}_j)=\set{K}_j$, so every polar of $\tens{T}_j$ belongs to
$\Sym^2\set{P}_j$.  If $\alpha\in\set{P}_j^\perp$, then for every $\beta$,
\[
 \beta\mathbin{\lrcorner}
   (\alpha\mathbin{\lrcorner}\tens{T}_j)
 =\alpha\mathbin{\lrcorner}
   (\beta\mathbin{\lrcorner}\tens{T}_j)=\vect{0}.
\]
Hence $\alpha\mathbin{\lrcorner}\tens{T}_j=\tens{0}$, so
$\set{A}_j\subseteq\set{P}_j$ and
$\tens{T}_j\in\Sym^3\set{A}_j$.
\end{proof}

\begin{proposition}[Saturated excess-three descent]
\label{lem:excess-three-descent}
Let $\tens{T}\in\Sym^3\set{V}$ be concise, let $d=\dim\set{V}\ge5$, and
suppose $r(\tens{T})=d+3$.  If a minimum CP decomposition
\[
 \tens{T}=\sum_{i=1}^{d+3}\vect{a}_i\otimes\vect{b}_i\otimes\vect{c}_i
\]
satisfies $\dim\Span\{\vect{b}_i\wedge\vect{c}_i\}=3$, then either $s(\tens{T})=r(\tens{T})$ or
there are $\alpha\in\set{V}^*$, $\vect{\ell}\in\set{V}$, and $q\ne0$ such that
$\alpha\mathbin{\lrcorner}\tens{T}=q\vect{\ell}^2$ and 
$\alpha(\vect{\ell})\ne0.$ 
In the latter case
\[
 \tens{T}=\frac{q}{\alpha(\vect{\ell})}\vect{\ell}^3+\tens{R},\ \ 
 \tens{R}\in\Sym^3(\ker\alpha),\ \
 r(\tens{R})=r(\tens{T})-1,\ \ 
 d(\tens{R})=d(\tens{T})-1.
\]
\end{proposition}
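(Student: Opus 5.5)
The consequences listed in the latter case follow directly from \cref{lem:transverse-rank-one-deletion}. Once $\alpha\mathbin{\lrcorner}\tens{T}=q\vect{\ell}^2$ with $q\ne0$ and $\alpha(\vect{\ell})\ne0$ is found, that lemma gives $\tens{R}\in\Sym^3(\ker\alpha)$, $r(\tens{R})=r(\tens{T})-1$ and $d(\tens{R})=d(\tens{T})-1$. So the real task is a dichotomy: either produce such a transverse rank-one polar, or prove $s(\tens{T})=r(\tens{T})$.

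\emph{Step 1: saturation.} I would start from the matrix argument of \cref{lem:wedge-support-bounds}. With $\matr{C}=(\vect{c}_1\ \cdots\ \vect{c}_{d+3})$ and $\matr{W}$ the matrix of the $bc$-wedges (the same argument applies to the $bc$-pair, by symmetry), one has $\matr{C}\matr{W}^{\mathsf T}=\matr{0}$. Since $\dim\ker\matr{C}=e=3=\rank\matr{W}$, the row space of $\matr{W}$ is exactly $\ker\matr{C}$. Every vector in this row space vanishes on the index set $\set{I}_0=\{i:\vect{b}_i\wedge\vect{c}_i=\vect{0}\}$. Hence the columns $\vect{c}_i$ with $i\in\set{I}_0$ are linearly independent, and moreover independent modulo the span of all the other $\vect{c}_j$.

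\emph{Step 2: $\set{I}_0\neq\emptyset$ gives the transverse polar.} Suppose $k\in\set{I}_0$. By Step~1 there is $\alpha\in\set{V}^*$ with $\alpha(\vect{c}_k)=1$ and $\alpha(\vect{c}_j)=0$ for all $j\ne k$. Contracting $\tens{T}$ in the third mode gives $\vect{a}_k\otimes\vect{b}_k$. By symmetry of $\tens{T}$ this equals the (symmetric) polar $\alpha\mathbin{\lrcorner}\tens{T}$, so $\vect{a}_k\parallel\vect{b}_k$. Together with $\vect{b}_k\parallel\vect{c}_k$, the polar has the form $q\vect{\ell}^2$ with $\vect{\ell}\parallel\vect{c}_k$, and $\alpha(\vect{\ell})\ne0$. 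This is exactly the latter alternative.

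\emph{Step 3: $\set{I}_0=\emptyset$.} Now every $\vect{b}_i,\vect{c}_i$ lies in the support hull $\set{H}_{bc}$, which must therefore equal $\set{V}$. I would apply \cref{lem:four-bivector-configuration} to the set of wedge directions spanning $\set{L}_{bc}$, which has dimension $3$.
\begin{enumerate}[label=\textup{(\alph*)}]
\item If four directions are in general position, part~(i) puts all supports in a subspace of dimension at most $4<d$. This contradicts $\set{H}_{bc}=\set{V}$.
\item If there are exactly three directions, the alternating relation splits the terms into three direction groups, as in \cref{lem:direction-group-partialization}. Each group subtotal is symmetric in its last two modes and has binary support. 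This yields a minimum partial decomposition, hence $\dim\set{V}\le 6$, and I would then compute $s$ blockwise.
\item If all directions but one lie in a two-dimensional subspace $\set{L}'\subseteq\set{L}_{bc}$, the alternating relation separates the exceptional group from the rest. The exceptional group gives binary partial terms. The remainder has wedge rank at most $2$, so \cref{lem:low-wedge-partialization} partializes it.
\end{enumerate}
In both (b) and (c) one obtains a minimum partial decomposition whose wedge span is saturated. I would then finish with the block principle \cref{lem:small-polar-blocks}, \cref{lem:partial-low-second-wedge}, and the small-dimension values $\Mthree(2)=3$, $\Mthree(3)=5$ and $\Mthree(4)\le7$, to match $s=r$.

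I expect Step~3 to be the main obstacle, specifically cases (b) and (c). After partialization the wedge span can still be three-dimensional, so \cref{lem:partial-low-second-wedge} does not apply directly. One must show that the polar space decomposes into disjoint-support blocks, so that \cref{lem:small-polar-blocks} splits $\tens{T}$ into pieces of dimension at most $4$ with ranks adding up exactly. Alternatively, a block that is itself a displayed cube gives back a transverse rank-one polar. The first thing I would try is to verify that, when $d\ge5$, the supports in these configurations are forced to be pairwise disjoint, so that $\tens{T}$ is a direct sum of binary and ternary blocks. Each such block then has $s=r$ by the classical small maximum ranks.
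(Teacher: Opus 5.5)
\textbf{There is a genuine gap: Step~2 is false, and the case it wrongly disposes of is never treated.}

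\emph{The error in Step~1.} You pair the $bc$-wedges with the wrong factor list. Symmetry in the last two modes gives $\sum_i\vect{a}_i\otimes(\vect{b}_i\wedge\vect{c}_i)=\tens{0}$. The relation is therefore $\matr{A}\matr{W}^{\mathsf T}=\matr{0}$ with the first-factor matrix $\matr{A}$; no symmetry identity gives $\matr{C}\matr{W}^{\mathsf T}=\matr{0}$.

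\emph{What the corrected step gives.} Saturation says that the $\vect{a}_k$ with $k\in\set{I}_0$ are independent modulo the remaining first factors. The functional $\alpha_k$ with $\alpha_k(\vect{a}_j)=\delta_{jk}$ then gives $\alpha_k\mathbin{\lrcorner}\tens{T}=\vect{b}_k\otimes\vect{c}_k=q_k\vect{\ell}_k^2$; this is exactly how the paper begins. But nothing forces $\alpha_k(\vect{\ell}_k)\ne0$.

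\emph{The tangent case occurs under the hypotheses.} On $\RR^6=\bigoplus_{i=1}^3\Span\{\vect{x}_i,\vect{y}_i\}$ take
\[
 \tens{T}=\sum_{i=1}^{3}\bigl(\vect{x}_i\otimes\vect{y}_i\otimes\vect{y}_i
 +\vect{y}_i\otimes\vect{x}_i\otimes\vect{y}_i
 +\vect{y}_i\otimes\vect{y}_i\otimes\vect{x}_i\bigr),
\]
that is, three copies of $3xy^2$ on independent planes.
\begin{itemize}
\item It is concise with $d=6$.
\item Its CP rank is $9=d+3$: real rank is at least complex rank, and complex rank is additive here by the JaJa--Takche theorem, since each summand has a two-dimensional factor space.
\item The displayed decomposition has $bc$-wedge span $\Span\{\vect{x}_i\wedge\vect{y}_i\}$ of dimension three, and $\set{I}_0\ne\emptyset$.
\end{itemize}
Yet $\alpha\mathbin{\lrcorner}\tens{T}=\sum_i\bigl(\alpha(\vect{x}_i)\vect{y}_i^2+2\alpha(\vect{y}_i)\vect{x}_i\vect{y}_i\bigr)$ has rank one only when it equals $\alpha(\vect{x}_i)\vect{y}_i^2$ with $\alpha(\vect{y}_i)=0$. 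So this tensor has no transverse rank-one polar at all. Your claimed independence of the $\vect{c}_k$, $k\in\set{I}_0$, also visibly fails: $\vect{y}_1$ is the third factor of two terms. Since Step~3 assumes $\set{I}_0=\emptyset$, you never prove $s(\tens{T})=r(\tens{T})$ when zero-wedge terms exist and all of them are tangent. That is exactly where this example lives.

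\emph{The missing idea.} The tangent case is the core of the paper's proof. The paper proceeds as follows.
\begin{itemize}
\item It sharpens saturation: $\matr{A}^*$ maps $\set{V}^*$ isomorphically onto $\ker\matr{\Omega}$. Hence $\matr{\Phi}\colon\vect{t}\mapsto\sum_it_i\vect{b}_i\otimes\vect{c}_i$ restricts to an isomorphism $\ker\matr{\Omega}\to\set{P}_{\mathrm{pol}}(\tens{T})$.
\item Assuming all $\alpha_j(\vect{\ell}_j)=0$, commutativity of mixed contractions gives $\alpha_k(\vect{\ell}_j)=0$ for all $j,k\in\set{Z}$.
\item It pulls back $\set{P}_{\mathrm{pol}}(\tens{T})=\bigoplus_{j\in\set{Z}}\RR\vect{\ell}_j^2\oplus\set{K}_{\set{N}}$ to $\set{V}^*$ and uses $\alpha_j\mathbin{\lrcorner}(\beta\mathbin{\lrcorner}\tens{T})=q_j\beta(\vect{\ell}_j)\vect{\ell}_j$. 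This forces every $\vect{\ell}_j$ into $\set{W}=\sum_{i\in\set{N}}\operatorname{supp}(\vect{\omega}_i)$, so $\set{W}=\set{V}$ and $d\le6$. Once the direction groups are split, it also forces each $\vect{\ell}_j$ into a unique support block.
\item Only after every line $\RR\vect{\ell}_j^2$ is allocated does \cref{lem:small-polar-blocks} write $\tens{T}$ as a direct sum of binary and ternary cubics. This gives $s\le9=r$ for $d=6$ and $s\le8=r$ for $d=5$, and rules out three pairwise disjoint planes in $d=5$ via $s\le7<8$.
\end{itemize}
The injectivity of $\matr{\Phi}|_{\ker\matr{\Omega}}$ is also what turns the direction grouping in your cases (b) and (c) into a splitting of the polar space. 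You flag this as the obstacle but do not supply it, so even your $\set{I}_0=\emptyset$ branch remains a plan rather than a proof.
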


\begin{proof}
Set $\matr{M}_i=\vect{b}_i\otimes\vect{c}_i$ and
$\vect{\omega}_i=\vect{b}_i\wedge\vect{c}_i$, and define
\[
 \matr{\Omega}:\RR^{r}\to\bigwedge^2\set{V},\quad
 \matr{\Omega}(\vect{t})=\sum_it_i\vect{\omega}_i,\ \ 
 \matr{\Phi}:\RR^r\to\set{V}\otimes\set{V},\quad
 \matr{\Phi}(\vect{t})=\sum_it_i\matr{M}_i.
\]
% LEAN: saturated_actual_polar_parameterization, using antisymmetric tensors
% b\otimes c-c\otimes b to represent the wedge columns.
If $\matr{A}:\RR^r\to\set{V}$ has columns $\vect{a}_i$, then, using the
standard-coordinate identification $(\RR^r)^*\cong\RR^r$, one has
$\operatorname{pol}_{\tens{T}}=\matr{\Phi}\matr{A}^*$, while symmetry gives
$\operatorname{im}\matr{A}^*\subseteq\ker\matr{\Omega}$.  Hence
\[
 d=\rank\operatorname{pol}_{\tens{T}}
 \le\rank\matr{A}^*\le\dim\ker\matr{\Omega}=r-3=d.
\]
Thus $\matr{A}^*:\set{V}^*\to\ker\matr{\Omega}$ is an isomorphism and
$\matr{\Phi}(\ker\matr{\Omega})=\set{P}_{\mathrm{pol}}(\tens{T})$.
Both spaces have dimension $d$, so the restriction of $\matr{\Phi}$ is an isomorphism.

Let $\set{Z}=\{i:\vect{\omega}_i=\vect{0}\}$ and $\set{N}=\{1,\ldots,r\}\setminus\set{Z}$.
Minimality makes every factor in the decomposition nonzero.  Thus, for
$j\in\set{Z}$, write $\matr{M}_j=q_j\vect{\ell}_j^2$ with
$q_j\ne0$ and $\vect{\ell}_j\ne\vect{0}$.  Since
$\vect{e}_j\in\ker\matr{\Omega}$, there is a unique $\alpha_j$ with
$\alpha_j\mathbin{\lrcorner}\tens{T}=q_j\vect{\ell}_j^2$.
Indeed,
$\matr{\Phi}(\matr{A}^*\alpha_j)=\matr{\Phi}(\vect{e}_j)$, so injectivity
gives $\matr{A}^*\alpha_j=\vect{e}_j$.  Hence the $\alpha_j$ are independent;
and if $\vect{\ell}_j$ and $\vect{\ell}_k$ were proportional, then
$\matr{\Phi}(\vect{e}_j)$ and $\matr{\Phi}(\vect{e}_k)$ would be proportional,
contradicting the injectivity of $\matr{\Phi}|_{\ker\matr{\Omega}}$.
Thus the $\vect{\ell}_j$ are pairwise nonproportional.  If some
$\alpha_j(\vect{\ell}_j)\ne0$,
\cref{lem:transverse-rank-one-deletion} gives the asserted cube deletion,
CP-rank drop, and dimension drop.

% LEAN: actual_mixed_contractions_commute and
% actual_distinct_rank_one_polars_annihilate.
Assume now that every $\alpha_j(\vect{\ell}_j)=0$.  Commutativity of mixed
contractions gives
$\alpha_k\mathbin{\lrcorner}(q_j\vect{\ell}_j^2)
=\alpha_j\mathbin{\lrcorner}(q_k\vect{\ell}_k^2)$, that is,
$q_j\alpha_k(\vect{\ell}_j)\vect{\ell}_j
=q_k\alpha_j(\vect{\ell}_k)\vect{\ell}_k$,
and therefore $\alpha_k(\vect{\ell}_j)=0$ for all $j,k\in\set{Z}$.
Let $\set{W}=\sum_{i\in\set{N}}\operatorname{supp}(\vect{\omega}_i)$.
The decomposition
$\ker\matr{\Omega}=\RR^{\set{Z}}\oplus
\ker(\matr{\Omega}|_{\RR^{\set{N}}})$
induces, with
$\set{K}_{\set{N}}=\matr{\Phi}
(\ker(\matr{\Omega}|_{\RR^{\set{N}}}))$,
\[
 \set{P}_{\mathrm{pol}}(\tens{T})=
 \bigoplus_{j\in\set{Z}}\RR\vect{\ell}_j^2\oplus\set{K}_{\set{N}},\ \ 
 \set{K}_{\set{N}}\subseteq\Sym^2\set{W}.
\]
Since $\operatorname{pol}_{\tens{T}}$ is an isomorphism from $\set{V}^*$
onto its image, this decomposition pulls back to
$\set{V}^*=\bigoplus_{j\in\set{Z}}\RR\alpha_j\oplus\set{D}_{\set{N}}$,
where $\set{D}_{\set{N}}=
\operatorname{pol}_{\tens{T}}^{-1}(\set{K}_{\set{N}})$.
If $\vect{\ell}_j\notin\set{W}$ and $\beta\in\set{D}_{\set{N}}$, then
$\alpha_j\mathbin{\lrcorner}(\beta\mathbin{\lrcorner}\tens{T})\in\set{W}$,
whereas commutativity of contractions gives
$\alpha_j\mathbin{\lrcorner}(\beta\mathbin{\lrcorner}\tens{T})
=q_j\beta(\vect{\ell}_j)\vect{\ell}_j$.  Hence
$\beta(\vect{\ell}_j)=0$.  We also have
$\alpha_k(\vect{\ell}_j)=0$ for every $k\in\set{Z}$.  The preceding
direct sum would therefore make every element of $\set{V}^*$ annihilate
$\vect{\ell}_j$, a contradiction.  Hence $\vect{\ell}_j\in\set{W}$ for
every $j\in\set{Z}$.  All second- and third-mode factors now belong to
$\set{W}$ and, by \cref{lem:minimum-cpd}, each of these two factor families
spans $\set{V}$.  Consequently, $\set{W}=\set{V}$.

Choose three observed bivectors that form a basis of
$\Span\{\vect{\omega}_i\}$, and let $\set{H}$ be the sum of their support
planes.  Every observed bivector belongs to $\bigwedge^2\set{H}$, so its
support lies in $\set{H}$.  Thus $\set{V}=\set{W}\subseteq\set{H}$ and
$d\le\dim\set{H}\le6$.

% LEAN: keepCoordinates_preserves_alternating_kernel and
% saturated_polar_blocks_from_cp_partition construct the supported polar blocks.
We construct the supported blocks used below directly from coordinates.
Suppose $\set{N}=\bigsqcup_\nu\set{I}_\nu$, with
$\vect{b}_i,\vect{c}_i\in\set{P}_\nu$ for $i\in\set{I}_\nu$, and suppose the spaces
$\set{L}_\nu=\Span\{\vect{\omega}_i:i\in\set{I}_\nu\}$ form a direct sum.
Let $\matr{R}_\nu$ retain precisely the coordinates in $\set{I}_\nu$ and put
$\set{K}=\ker(\matr{\Omega}|_{\RR^{\set{N}}})$.
For $\vect{t}\in\set{K}$, independence of the $\set{L}_\nu$ gives
\[
 \vect{0}=\sum_\nu\matr{\Omega}\matr{R}_\nu\vect{t},\ \
 \matr{\Omega}\matr{R}_\nu\vect{t}=\vect{0},\ \
 \sum_\nu\matr{R}_\nu|_{\set{K}}=\operatorname{id}_{\set{K}},\ \
 \matr{R}_\nu\matr{R}_\mu=\delta_{\nu\mu}\matr{R}_\nu.
\]
Since $\matr{\Phi}|_{\ker\matr{\Omega}}$ is injective, we obtain
\[
 \set{K}_{\set{N}}=\bigoplus_\nu\set{K}_\nu,
 \ \  \set{K}_\nu=\matr{\Phi}(\matr{R}_\nu\set{K}).
\]
The condition $\matr{\Omega}\matr{R}_\nu\vect{t}=\vect{0}$ makes the corresponding
quadratic tensor symmetric, so
\[
 \matr{\Phi}(\matr{R}_\nu\vect{t})
 =\frac12\sum_{i\in\set{I}_\nu}t_i
  (\vect{b}_i\otimes\vect{c}_i+\vect{c}_i\otimes\vect{b}_i)
 \in\Sym^2\set{P}_\nu.
\]
Here each parenthesis equals
$(\vect{b}_i+\vect{c}_i)^2-\vect{b}_i^2-\vect{c}_i^2$.
Set $\set{D}_\nu=\operatorname{pol}_{\tens{T}}^{-1}(\set{K}_\nu)$.
The polar-space decomposition gives
$\set{D}_{\set{N}}=\bigoplus_\nu\set{D}_\nu$ and hence
$\set{V}^*=\bigoplus_{j\in\set{Z}}\RR\alpha_j
\oplus\bigoplus_\nu\set{D}_\nu$.
% LEAN: actual_rank_one_polar_excludes_support and
% actual_rank_one_polar_support_allocation apply to these constructed blocks.
For $\beta\in\set{D}_\nu$, commutativity of mixed contractions gives
\[
q_j\beta(\vect{\ell}_j)\vect{\ell}_j
=\alpha_j\mathbin{\lrcorner}
  \operatorname{pol}_{\tens{T}}(\beta)\in\set{P}_\nu.
\]
Thus $\vect{\ell}_j\notin\set{P}_\nu$ implies
$\beta(\vect{\ell}_j)=0$ for all $\beta\in\set{D}_\nu$.
If $\vect{\ell}_j$ belonged to none of the $\set{P}_\nu$, these
vanishings and those from the zero-wedge polars would make the entire
space $\set{V}^*$ annihilate $\vect{\ell}_j\ne\vect{0}$, a contradiction.
Hence each $\vect{\ell}_j$ belongs to at least one support block;
when the blocks are pairwise disjoint, that block is unique.

Suppose $d=6$.  Choose observed bivectors
$\vect{\omega}_1,\vect{\omega}_2,\vect{\omega}_3$ forming a basis of
$\Span\{\vect{\omega}_i\}$, and put
$\set{P}_j=\operatorname{supp}(\vect{\omega}_j)$.  Their support planes
span $\set{W}=\set{V}$; since $d=6$, they form a direct sum of three planes.
% LEAN: simple_direction_in_independent_planes, via contraction-rank additivity.
For $\vect{\omega}=\sum_{j=1}^3\lambda_j\vect{\omega}_j$, put
$\matr{F}_{\vect{\omega}}(\beta)=\beta\mathbin{\lrcorner}\vect{\omega}$.
Independence of the support planes gives
\[
 \rank \matr{F}_{\vect{\omega}}=2\,|\{j:\lambda_j\ne0\}|.
\]
A nonzero decomposable $\vect{\omega}$ has contraction rank two;
hence exactly one coefficient is nonzero.  Direction grouping therefore gives
$\set{K}_{\set{N}}=\set{K}_1\oplus\set{K}_2\oplus\set{K}_3$ with
$\set{K}_i\subseteq\Sym^2\set{P}_i$.  Every $\vect{\ell}_j$ lies in exactly one \(\set{P}_i\),
because the planes are disjoint.  Adjoining each line
\(\RR\vect{\ell}_j^2\) to its unique block gives
\[
 \set{P}_{\mathrm{pol}}(\tens{T})=
 \widetilde{\set{K}}_1\oplus\widetilde{\set{K}}_2\oplus
 \widetilde{\set{K}}_3,\ \ 
 \widetilde{\set{K}}_i\subseteq\Sym^2\set{P}_i.
\]
% LEAN: saturated_blocks_from_cp_partition constructs the cubic blocks once
% the CP partition and its factor-support conditions have been established.
By \cref{lem:small-polar-blocks}, $\tens{T}$ is a direct sum of three
binary cubics, so $r(\tens{T})\le s(\tens{T})\le9=r(\tens{T})$.

It remains to treat $d=5$.  Let
$\set{S}=\{\RR\vect{\omega}_i:\vect{\omega}_i\ne\vect{0}\}$ be the set of
one-dimensional subspaces represented by the nonzero observed bivectors.
It spans $\set{L}_{bc}$ by definition.  If four members had representatives
such that every three were independent,
they would span the three-dimensional space $\set{L}_{bc}$;
\cref{lem:four-bivector-configuration}(i) would then put the support
of every observed bivector in a common subspace of dimension at most four, contrary to
\(\set{W}=\set{V}\) and \(d=5\).  Hence no such four members exist, and
\cref{lem:four-bivector-configuration}(ii) shows that \(\set{S}\) either has
exactly three members or has all but one lying in a two-dimensional subspace
$\set{M}\subseteq\set{L}_{bc}$.

Assume first that $|\set{S}|>3$.  Then the second alternative places at least
three members of $\set{S}$ in $\set{M}$.  Since $\set{S}$ spans the
three-dimensional space $\set{L}_{bc}$, exactly one member lies outside
$\set{M}$.  Choose independent decomposable representatives
$\vect{p},\vect{q}\in\set{M}$ and a third representative
$\vect{p}+\lambda\vect{q}$ with $\lambda\ne0$.  Its decomposability gives
$\vect{0}=(\vect{p}+\lambda\vect{q})^{\wedge2}=2\lambda\vect{p}\wedge\vect{q}$,
so the two support planes meet.  Put
$\set{P}=\operatorname{supp}(\vect{p})+
\operatorname{supp}(\vect{q})$.  Then $\dim\set{P}=3$ and
$\set{M}\subseteq\bigwedge^2\set{P}$.  The remaining direction has a
two-dimensional support $\set{Q}$.  Since $\set{P}+\set{Q}=\set{W}=\set{V}$
and $d=5$, one has $\set{V}=\set{P}\oplus\set{Q}$.  Direction grouping,
the preceding assertion about the $\vect{\ell}_j$, and the same allocation of
each \(\RR\vect{\ell}_j^2\) to its unique support block give
\(\set{P}_{\mathrm{pol}}(\tens{T})=
\widetilde{\set{K}}_{\set{P}}\oplus
\widetilde{\set{K}}_{\set{Q}}\), with
\(\widetilde{\set{K}}_{\set{P}}\subseteq\Sym^2\set{P}\) and
\(\widetilde{\set{K}}_{\set{Q}}\subseteq\Sym^2\set{Q}\).  Thus
\cref{lem:small-polar-blocks} gives a ternary plus a binary cubic.  Hence
$r(\tens{T})\le s(\tens{T})\le5+3=8=r(\tens{T})$.

If $\set{S}$ consists of exactly three members and two support planes meet,
their sum is three-dimensional and, since the three supports span
$\set{W}=\set{V}$, is complementary to the third support plane.  The same
two-block construction gives
$r(\tens{T})\le s(\tens{T})\le5+3=8=r(\tens{T})$.
If the three support planes are
pairwise disjoint, direction grouping and unique allocation of the
zero-wedge squares split the entire polar space into three supported
blocks.  Lemma~\ref{lem:small-polar-blocks} gives
$\set{V}=\set{A}_1\oplus\set{A}_2\oplus\set{A}_3$ with
$\dim\set{A}_i\le2$.  Since the dimensions sum to $\dim\set{V}=5$, they are
$2,2,1$ up to order, so $s(\tens{T})\le3+3+1=7$, contradicting $r(\tens{T})=8$.
Thus every remaining branch has $s(\tens{T})=r(\tens{T})$.
\end{proof}

\subsection{Proof of the zero-gap proposition}

\begin{proof}[Proof of \cref{prop:excess-three-zero}]
Assume otherwise, and choose a counterexample $\tens{T}$ with
$r=r(\tens{T})$ minimal.  Then $\tens{T}\ne\tens{0}$.  By
\cref{lem:concise-reduction}, take
$\set{V}=\Ess(\tens{T})$, and put $d=\dim\set{V}$ and $e=r-d\le3$.
Fix a minimum CP decomposition $\mathscr{D}$.  If $\mathscr{D}$ contains
a displayed cube, \cref{lem:displayed-cube-deletion} gives a residual
$\tens{R}$ with $r(\tens{R})<r$, $e(\tens{R})\le e$, and
$\DeltaT(\tens{T})\le\DeltaT(\tens{R})=0$ by minimality (with the zero
residual being immediate), a contradiction.  Thus $\mathscr{D}$ is cube-free.

For any minimum decomposition $\mathscr{E}$ of $\tens{T}$ and any mode
pair $xy$, let $\rho_{xy}(\mathscr{E})$ denote its wedge rank.  The
matrix-kernel argument in the proof of
\cref{lem:wedge-support-bounds}, together with
\cref{lem:minimum-cpd}, gives $\rho_{xy}(\mathscr{E})\le r-d=e\le3$. 
If all three wedge ranks of $\mathscr{D}$ are at most two, then, after
relabeling the modes, \cref{lem:low-wedge-partialization} gives a minimum
decomposition $\mathscr{E}$ of the form
$\tens{T}=\sum_{i=1}^{r}\vect{a}_i\otimes\vect{\ell}_i^{\otimes2}$.
If $\mathscr{E}$ contained a displayed cube, the preceding deletion argument
would give the same contradiction.  Hence $\mathscr{E}$ is cube-free and
$\vect{a}_i\wedge\vect{\ell}_i\ne\vect{0}$ for every $i$.  Set
$\sigma=\dim\Span\{\vect{a}_i\wedge\vect{\ell}_i\}$; the same kernel
argument gives $\sigma\le e$.  If $\sigma\le2$,
\cref{lem:partial-low-second-wedge} gives $s(\tens{T})=r$, a contradiction.
If $\sigma=3$, interchanging the first two factors gives the minimum
decomposition $\sum_i\vect{\ell}_i\otimes\vect{a}_i\otimes\vect{\ell}_i$,
whose $bc$-wedge rank is three.  Consequently, every remaining case has a minimum
cube-free decomposition with one mode-pair wedge rank equal to three.

It follows that $e=3$ and $r=d+3$.  For $1\le d\le3$, the inequalities
$r\le s(\tens{T})\le\Mthree(d)$ contradict
$\Mthree(1)=1$, $\Mthree(2)=3$, and $\Mthree(3)=5$; for $d=4$,
\cref{lem:four-variable-seven} gives $r=7\le s(\tens{T})\le7$, again a
contradiction.
Finally, if $d\ge5$, relabel the modes and apply
\cref{lem:excess-three-descent}.  Its equality branch contradicts the
choice of $\tens{T}$.  Otherwise
$\tens{T}=\lambda\vect{\ell}^{3}+\tens{R}$ with
$r(\tens{R})=r-1$ and $d(\tens{R})=d-1$, so $e(\tens{R})=3$ and
minimality gives $\DeltaT(\tens{R})=0$.  Since
$s(\tens{T})\le1+s(\tens{R})$, it follows that
$\DeltaT(\tens{T})\le\DeltaT(\tens{R})=0$, again a contradiction.
\end{proof}

\begin{proof}[Proof of \cref{thm:rank-excess-main}]
If $e(\tens{T})\leq3$, apply \cref{prop:excess-three-zero}; if
$e(\tens{T})\geq4$, apply \cref{prop:linear-excess-main}.
These are the two branches of \eqref{eq:def-C-excess}.
\end{proof}

\section{Bounds controlled by essential dimension}
\label{sec:proof-dimension}

This section proves the two propositions supporting
\cref{thm:essential-dimension-main}.  We first derive the hyperplane-reduction
estimate from polar completion, then establish the shifted maximum-rank
estimate and the exceptional six-variable endpoint.  The final subsection
assembles these inputs into the two dimension bounds.
For $\tens{T}\in\Sym^3(\set{U})$ and $\alpha\in\set{U}^*$, write
$\alpha\mathbin{\lrcorner}\tens{T}
\eqdef(\alpha\otimes\operatorname{id}\otimes\operatorname{id})(\tens{T})$.
Under the polynomial identification, this contraction is
$\frac13\partial_\alpha\tens{T}$; we use the same notation for a
symmetric tensor and its associated homogeneous polynomial.

\subsection{Hyperplane reduction by transverse polars}

As before, $\vect{v}^2$ denotes $\vect{v}\otimes\vect{v}$. 
The polar-completion lemma constructs a cubic with a prescribed
quadratic polar.  A transverse diagonalization costs exactly $h$
cubes; adding and subtracting one auxiliary cube handles the remaining
case.  This is the rank-sensitive form of the direction-avoidance step
related to \cite{BialynickiBirulaSchinzel2008,BialynickiBirulaSchinzel2011corrigendum,jelisiejew2014upper}.

\begin{lemma}[Polar completion]
\label{lem:polar-completion}
Let $\set{U}$ be a real vector space of dimension $m+1$, where
$m\geq0$, let $\tens{T}\in\Sym^3(\set{U})$, and fix
$\alpha\in\set{U}^*$ such that
$\tens{Q}\eqdef\alpha\mathbin{\lrcorner}\tens{T}\neq\tens{0}$.
Set $\set{H}\eqdef\ker\alpha$ and $h\eqdef\rank(\tens{Q})$.  Then
\begin{equation}
s(\tens{T})\leq h+2+\Mthree(m).
\label{eq:polar-completion}
\end{equation}
If $\Ess(\tens{Q})\not\subseteq\set{H}$, then
\begin{equation}
s(\tens{T})\leq h+\Mthree(m).
\label{eq:polar-transverse}
\end{equation}
\end{lemma}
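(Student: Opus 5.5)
The plan is to subtract from $\tens{T}$ a few cubes so that the residual has zero $\alpha$-polar. By \cref{lem:contraction-essential-support} the residual then lies in $\Sym^3(\set{H})$, and \cref{lem:concise-reduction} bounds its symmetric rank by $\Mthree(m)$, since $\dim\set{H}=m$. The key identity is $\alpha\mathbin{\lrcorner}\vect{v}^3=\alpha(\vect{v})\vect{v}^2$. Suppose we can write
\[
\tens{Q}=\sum_{i=1}^{k}c_i\vect{v}_i^2
\quad\text{with}\quad \alpha(\vect{v}_i)\neq0 .
\]
Then $\tens{R}=\tens{T}-\sum_i c_i\vect{v}_i^3/\alpha(\vect{v}_i)$ satisfies $\alpha\mathbin{\lrcorner}\tens{R}=\tens{0}$. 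This gives $s(\tens{T})\le k+\Mthree(m)$; each scalar multiple of a cube counts as one real cube.

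\emph{Transverse case.} Assume $\set{W}=\Ess(\tens{Q})\not\subseteq\set{H}$, so $\alpha|_{\set{W}}\neq0$. Here we need a $\tens{Q}$-diagonalizing basis $\vect{v}_1,\dots,\vect{v}_h$ of $\set{W}$ with every $\alpha(\vect{v}_i)\ne0$. We argue as in the proof of \cref{lem:four-variable-seven}, using the nondegenerate form $b_{\tens{Q}}$ on $\set{W}$ induced by $\tens{Q}$, and build the basis inductively.
\begin{itemize}
\item[(1)] Pick $\vect{v}_1\in\set{W}$ that is $b_{\tens{Q}}$-anisotropic, satisfies $\alpha(\vect{v}_1)\ne0$, and is such that $\alpha$ does not vanish on $\vect{v}_1^{\perp_{b_{\tens{Q}}}}$. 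Each of these conditions excludes a proper algebraic subset, so they hold on a nonempty Zariski-open set. The last condition fails only when $\vect{v}_1$ is $b_{\tens{Q}}$-proportional to the vector representing $\alpha|_{\set{W}}$, which is a single line.
\item[(2)] Recurse on the nondegenerate complement $\vect{v}_1^{\perp}$, on which $\alpha$ is still nonzero.
\end{itemize}
When $h=1$ the only case is $\tens{Q}=c\vect{v}^2$ with $\alpha(\vect{v})\ne0$, which is immediate. At each step we must check that the complement stays nondegenerate and that $\alpha$ stays nonzero on it. This is the main technical point, but it is a generic-avoidance argument over $\RR$, valid because a nonzero real polynomial has nonempty nonvanishing locus. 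The result is \eqref{eq:polar-transverse}.

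\emph{Tangent case.} Assume $\Ess(\tens{Q})\subseteq\set{H}$. Choose $\vect{u}$ with $\alpha(\vect{u})=1$. We add and subtract one auxiliary square: for a suitable $\lambda\ne0$, set $\tens{Q}'=\tens{Q}+\lambda\vect{u}^2$. Then $\Ess(\tens{Q}')=\Ess(\tens{Q})\oplus\RR\vect{u}\not\subseteq\set{H}$, and $\rank\tens{Q}'=h+1$.
\begin{itemize}
\item[(1)] Apply the transverse diagonalization to $\tens{Q}'$. This yields $h+1$ cubes $\tens{C}$ with $\alpha\mathbin{\lrcorner}\tens{C}=\tens{Q}'$.
\item[(2)] Set $\tens{R}=\tens{T}-\tens{C}+\lambda\vect{u}^3$. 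Then $\alpha\mathbin{\lrcorner}\tens{R}=\tens{Q}-\tens{Q}'+\lambda\vect{u}^2=\tens{0}$, so $\tens{R}\in\Sym^3(\set{H})$.
\end{itemize}
Counting terms gives $s(\tens{T})\le(h+1)+1+\Mthree(m)$, which is \eqref{eq:polar-completion}. Since the transverse bound is stronger, \eqref{eq:polar-completion} holds in all cases. The main obstacle I expect is the inductive avoidance in the transverse diagonalization for indefinite or degenerate-on-$\set{H}$ configurations. I would handle it by the explicit $b_{\tens{Q}}$-orthogonal complement construction of \cref{lem:four-variable-seven}, generalized to arbitrary $h$.
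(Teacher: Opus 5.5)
Your proposal is correct and follows the paper's proof. In the transverse case you subtract $h$ cubes whose $\alpha$-polars reproduce a diagonalization of $\tens{Q}$ with every $\alpha(\vect{v}_i)\ne0$, so the residual lies in $\Sym^3(\set{H})$. In the tangent case you add and subtract one auxiliary cube $\lambda\vect{u}^3$ (the paper takes $\lambda=1$), which makes the polar transverse of rank $h+1$. The only difference is how you produce the transverse diagonalization. You build a $b_{\tens{Q}}$-orthogonal basis inductively by generic avoidance, whereas the paper starts from a Sylvester basis and repairs each zero $\alpha$-value by an explicit pairwise change, either $(\vect{v}_i\pm\vect{v}_j)/\sqrt2$ or $\sqrt2\vect{v}_i+\vect{v}_j,\ \vect{v}_i+\sqrt2\vect{v}_j$. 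Both arguments are valid.
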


\begin{proof}
Put $\set{H}=\ker\alpha$ and $\set{W}=\Ess(\tens{Q})$.
Choose $\vect{z}\in\set{U}$ with $\alpha(\vect{z})=1$.
Under $\set{U}=\set{H}\oplus\RR\vect{z}$, contraction by
$\alpha$ is one third of differentiation with respect to the coordinate $\vect{z}$.
Its kernel on cubics is therefore precisely $\Sym^3(\set{H})$. 
First suppose $\set{W}\not\subseteq\set{H}$.
Sylvester's law of inertia gives
$\tens{Q}=\sum_{i=1}^h\varepsilon_i\vect{v}_i^2$, where
$\varepsilon_i\in\{1,-1\}$ and the $\vect{v}_i$ form a basis of
$\set{W}$.  Since $\alpha|_{\set{W}}\ne0$, at least one basis vector
has nonzero $\alpha$-value.  If $\alpha(\vect{v}_j)=0$, choose $i$
with $\alpha(\vect{v}_i)\ne0$ and change only this pair.  When
$\varepsilon_i=\varepsilon_j$, replace the pair by
$(\vect{v}_i+\vect{v}_j)/\sqrt2$ and
$(\vect{v}_i-\vect{v}_j)/\sqrt2$.  When
$\varepsilon_i=-\varepsilon_j$, use instead
$\sqrt2\vect{v}_i+\vect{v}_j$ and
$\vect{v}_i+\sqrt2\vect{v}_j$.  These changes preserve the signed
quadratic sum and replace one zero $\alpha$-value by nonzero values
without introducing a new zero.  Repeating this operation gives
$\tens{Q}=\sum_{i=1}^h c_i\vect{v}_i^2$ with $c_i\ne0$ and
$\alpha(\vect{v}_i)\ne0$ for every $i$.  Put
$\tens{S}=\sum_{i=1}^h
(c_i/\alpha(\vect{v}_i))\vect{v}_i^3$.
Then $\alpha\mathbin{\lrcorner}\tens{S}=\tens{Q}$, so
$\tens{T}-\tens{S}\in\Sym^3(\set{H})$.
Every coefficient in $\tens{S}$ can be absorbed by a real cube root;
consequently $s(\tens{T})\le h+\Mthree(m)$.
This proves \eqref{eq:polar-transverse}.

Now suppose $\set{W}\subseteq\set{H}$ and set
$\widetilde{\tens{T}}=\tens{T}+\vect{z}^3$.
Its polar is $\widetilde{\tens{Q}}=\tens{Q}+\vect{z}^2$.
Relative to $\set{H}\oplus\RR\vect{z}$, this quadratic tensor is
block diagonal, so it has rank $h+1$ and essential space
$\set{W}\oplus\RR\vect{z}\not\subseteq\set{H}$.
The transverse case applied to $\widetilde{\tens{T}}$ gives
$s(\widetilde{\tens{T}})\le h+1+\Mthree(m)$.
Since $-\vect{z}^3=(-\vect{z})^3$, subtracting this auxiliary cube
yields $s(\tens{T})\le h+2+\Mthree(m)$.
The two cases prove \eqref{eq:polar-completion}.
\end{proof}

The next estimate records the two bounds obtained from the same
minimum CP decomposition.  Its two functionals need not coincide, so
the minimum retains the better choice.

\begin{lemma}[Adaptive hyperplane estimate]
\label{lem:adaptive-bound}
Let $\set{U}$ be a real vector space, let
$\tens{T}\in\Sym^3(\set{U})$ be nonzero and concise, and let
$\tens{T}=\sum_{j=1}^{r(\tens{T})}
\vect{a}_j\otimes\vect{b}_j\otimes\vect{c}_j$ be any minimum CP
decomposition.  For the first factor matrix
$\matr{A}=(\vect{a}_1\ \cdots\ \vect{a}_{r(\tens{T})})$, define
$\eta_{\matr{A}}$ to be the maximum of
$\#\{j:\alpha(\vect{a}_j)=0\}$ over nonzero
$\alpha\in\set{U}^*$.  Define $\eta_{\matr{A}}^{\pitchfork}$ by the
same maximum restricted to functionals satisfying
$\alpha^2\mathbin{\lrcorner}\tens{T}\ne\vect{0}$, where the second
contraction is successive; this restricted set is nonempty by
polarization.  Then
\begin{equation}
\DeltaT(\tens{T})
\leq\Mthree\bigl(d(\tens{T})-1\bigr)
+\min\{2-\eta_{\matr{A}},-\eta_{\matr{A}}^{\pitchfork}\}.
\label{eq:adaptive-bound}
\end{equation}
\end{lemma}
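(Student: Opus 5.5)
The plan is to bound $h=\rank(\alpha\mathbin{\lrcorner}\tens{T})$ through the minimum CP decomposition and then apply \cref{lem:polar-completion} with $m=d-1$. Since $\tens{T}$ is concise, $\set{U}=\Ess(\tens{T})$ has dimension $d=d(\tens{T})$. Fix a nonzero $\alpha\in\set{U}^*$ and contract the decomposition in the first mode:
\[
\alpha\mathbin{\lrcorner}\tens{T}=\sum_{j}\alpha(\vect{a}_j)\,\vect{b}_j\otimes\vect{c}_j .
\]
Only indices with $\alpha(\vect{a}_j)\ne0$ contribute, so as a matrix $\tens{Q}=\alpha\mathbin{\lrcorner}\tens{T}$ has rank at most $r(\tens{T})-\#\{j:\alpha(\vect{a}_j)=0\}$. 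Choosing $\alpha$ to attain the relevant maximum therefore gives $h\le r(\tens{T})-\eta$, where $\eta$ is $\eta_{\matr{A}}$ or $\eta_{\matr{A}}^{\pitchfork}$ depending on the branch.

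For the first branch I take $\alpha$ attaining $\eta_{\matr{A}}$. Conciseness makes $\operatorname{pol}_{\tens{T}}$ injective by \cref{lem:contraction-essential-support}, so $\tens{Q}\ne\tens{0}$. Inequality \eqref{eq:polar-completion} with $m=d-1$ then gives
\[
s(\tens{T})\le r(\tens{T})-\eta_{\matr{A}}+2+\Mthree(d-1).
\]
Subtracting $r(\tens{T})$ yields the first term of the minimum.

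For the second branch I take $\alpha$ attaining $\eta_{\matr{A}}^{\pitchfork}$, so that $\alpha^2\mathbin{\lrcorner}\tens{T}=\alpha\mathbin{\lrcorner}\tens{Q}\ne\vect{0}$. The key observation is that this condition is exactly transversality: $\alpha\mathbin{\lrcorner}\tens{Q}$ lies in $\Ess(\tens{Q})$, since $\Ess(\tens{Q})$ is the image of the contraction map of $\tens{Q}$. A nonzero vector in that image on which $\alpha$ does not vanish requires $\Ess(\tens{Q})\not\subseteq\ker\alpha$. To make this precise, if $\Ess(\tens{Q})\subseteq\ker\alpha$ then $\tens{Q}\in\Sym^2(\ker\alpha)$ and $\alpha\mathbin{\lrcorner}\tens{Q}=\vect{0}$. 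Also $\tens{Q}\ne\tens{0}$ automatically, so \eqref{eq:polar-transverse} applies and gives $s(\tens{T})\le r(\tens{T})-\eta_{\matr{A}}^{\pitchfork}+\Mthree(d-1)$.

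Nonemptiness of the restricted set follows from polarization. If $\alpha^2\mathbin{\lrcorner}\tens{T}=\vect{0}$ for all $\alpha$, then $\alpha^3\mathbin{\lrcorner}\tens{T}$ vanishes identically, so the cubic form is zero, contradicting $\tens{T}\ne\tens{0}$. Combining the two bounds gives \eqref{eq:adaptive-bound}. I expect the only delicate point to be the transversality identification in the second branch; the rank count from the decomposition is routine.
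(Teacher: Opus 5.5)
Your proposal is correct and is essentially the paper's own proof. You contract the fixed minimum decomposition to get $h\le r(\tens{T})-\eta$, then apply the general and transverse branches of \cref{lem:polar-completion} with $m=d(\tens{T})-1$, using that $\alpha\mathbin{\lrcorner}\tens{Q}\ne\vect{0}$ forces $\Ess(\tens{Q})\not\subseteq\ker\alpha$. Your contrapositive argument for that implication, via $\tens{Q}\in\Sym^2(\ker\alpha)$, is exactly what the paper leaves implicit, and it suffices. Drop the stray phrase suggesting that $\alpha$ does not vanish on $\alpha\mathbin{\lrcorner}\tens{Q}$: the quantity $\alpha(\alpha\mathbin{\lrcorner}\tens{Q})=\tens{T}(\alpha,\alpha,\alpha)$ need not be nonzero.
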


\begin{proof}
Choose a nonzero functional $\alpha$ attaining $\eta_{\matr{A}}$.
Set $\tens{Q}=\alpha\mathbin{\lrcorner}\tens{T}$ and
$h=\rank(\tens{Q})$.
The first flattening of a concise tensor is injective, so
$\tens{Q}\ne\tens{0}$.
Contracting the fixed minimum CP decomposition leaves
$r(\tens{T})-\eta_{\matr{A}}$ nonzero rank-one matrix summands.
Matrix-rank subadditivity thus gives
$1\le h\le r(\tens{T})-\eta_{\matr{A}}$; equality is not needed.
Applying \cref{lem:polar-completion} with $m=d(\tens{T})-1$ yields
$\DeltaT(\tens{T})\le
\Mthree(d(\tens{T})-1)+2-\eta_{\matr{A}}$.

Choose a new functional $\alpha$ attaining
$\eta_{\matr{A}}^{\pitchfork}$, and reset
$\tens{Q}=\alpha\mathbin{\lrcorner}\tens{T}$ and
$h=\rank(\tens{Q})$.  Matrix-rank subadditivity gives
$h\le r(\tens{T})-\eta_{\matr{A}}^{\pitchfork}$.  The condition
$\alpha^2\mathbin{\lrcorner}\tens{T}\ne\vect{0}$ implies
$\Ess(\tens{Q})\not\subseteq\ker\alpha$, so the transverse part of
\cref{lem:polar-completion} gives
$\DeltaT(\tens{T})\le
\Mthree(d(\tens{T})-1)-\eta_{\matr{A}}^{\pitchfork}$.
Taking the better of these two estimates proves
\eqref{eq:adaptive-bound}.
\end{proof}

The next lemma supplies the hyperplane required by the transverse
branch of polar completion.  Its proof selects a basis among the first
factor columns and uses polarization when all pure second contractions
vanish.

\begin{lemma}[Transverse column hyperplane]
\label{lem:transverse-column-hyperplane}
Let $\tens{T}\in\Sym^3(\set{U})$ be
nonzero and concise, and suppose that
$r(\tens{T})>d=d(\tens{T})$.  For every minimum CP
decomposition with first factor matrix $\matr{A}$, one has
$\eta_{\matr{A}}^{\pitchfork}\geq d-1$.
\end{lemma}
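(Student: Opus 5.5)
The proof will work entirely with the first factor columns and the trilinear form $\tens{T}(\alpha,\beta,\gamma)$ on $\set{U}^*$. By \cref{lem:minimum-cpd}, the columns $\vect{a}_1,\dots,\vect{a}_r$ span $\set{U}$, where $r=r(\tens{T})$. A nonzero functional kills at least $d-1$ columns exactly when its kernel contains $d-1$ independent columns. So the candidate functionals are the $\alpha$ whose kernel is a hyperplane spanned by columns. The goal is to show that for at least one of these, $\alpha^2\mathbin{\lrcorner}\tens{T}=\tens{T}(\alpha,\alpha,\cdot)\ne\vect{0}$.

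\emph{Step 1: dual-basis candidates.} Choose columns $\vect{a}_{j_1},\dots,\vect{a}_{j_d}$ forming a basis, and let $\alpha_1,\dots,\alpha_d$ be the dual basis. Each $\alpha_k$ kills the $d-1$ columns $\vect{a}_{j_m}$ with $m\ne k$. If some $\alpha_k$ satisfies $\tens{T}(\alpha_k,\alpha_k,\cdot)\ne\vect{0}$, we are done. Otherwise all pure squares vanish, and polarization gives
\[
\tens{T}(s\alpha_k+t\alpha_l,\,s\alpha_k+t\alpha_l,\cdot)=2st\,\tens{T}(\alpha_k,\alpha_l,\cdot).
\]

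\emph{Step 2: the extra column.} This is where $r>d$ enters. Take a column $\vect{a}_0$ outside the basis, with $\vect{a}_0=\sum_k c_k\vect{a}_{j_k}$ and support $\set{S}=\{k:c_k\ne0\}$. For $k,l\in\set{S}$, the functional $\alpha=c_l\alpha_k-c_k\alpha_l$ kills $\vect{a}_0$ and the $d-2$ columns $\vect{a}_{j_m}$ with $m\ne k,l$. These $d-1$ columns are independent, so $\alpha$ kills at least $d-1$ columns. If all such $\alpha$ also fail, then $\tens{T}(\alpha_k,\alpha_l,\cdot)=\vect{0}$ for all $k,l\in\set{S}$. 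Exchanging $\vect{a}_0$ into the basis in place of some $\vect{a}_{j_k}$ with $k\in\set{S}$ gives further bases, and the same argument applies to them.

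\emph{Step 3: propagate to a contradiction.} Iterating Steps 1 and 2 over all bases and all non-basis columns, we obtain $\tens{T}(\alpha,\beta,\cdot)=\vect{0}$ whenever $\alpha$ and $\beta$ are dual to columns lying in a common circuit of the column matroid of $\matr{A}$. If the matroid is connected, the relevant pairs cover all pairs of dual basis vectors. Then $\tens{T}(\alpha,\beta,\cdot)=\vect{0}$ for all $\alpha,\beta$, contradicting $\tens{T}\ne\tens{0}$ and conciseness.

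\emph{Main obstacle: a disconnected column matroid.} Examples are several columns proportional to a single basis column, or a direct sum of circuits. Here the vanishing only holds within components, and the cross terms $\tens{T}(\alpha_k,\alpha_l,\cdot)$ for $k,l$ in different components are not yet controlled. I would first try to show that a matroid decomposition $\set{U}=\bigoplus_\nu\set{U}_\nu$ of the columns splits $\tens{T}$ into blocks in $\Sym^3(\set{U}_\nu)$. The tools would be the alternating identities \eqref{eq:alternating-identities} applied to the column partition, together with the polar-block mechanism of \cref{lem:small-polar-blocks}. After the split, one works in a block $\set{U}_\nu$ carrying a dependent column (one exists since $r>d$) whose restricted cubic is nonzero. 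There the connected case gives a functional $\alpha_\nu$ on $\set{U}_\nu$. Extending $\alpha_\nu$ by zero on the other blocks makes it kill all columns outside $\set{U}_\nu$, so it kills at least $d-1$ columns in total, and $\alpha_\nu^2\mathbin{\lrcorner}\tens{T}=\alpha_\nu^2\mathbin{\lrcorner}\tens{T}_\nu\ne\vect{0}$. If this splitting step fails, the fallback is to use linear independence of the rank-one terms from \cref{lem:minimum-cpd} to rule out that every such cross contraction vanishes.
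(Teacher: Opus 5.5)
There is a genuine gap, starting at Step 3. Steps 1 and 2 are correct and match the start of the paper's proof: the dual basis of $d$ independent columns, then $c_l\alpha_k-c_k\alpha_l$ for $k,l$ in the support $\set{S}$ of one extra column. The trouble begins after that. The propagation ``over all bases'' is not well defined, because every basis exchange changes the dual basis. So ``$\alpha,\beta$ dual to columns in a common circuit'' has no fixed meaning, and the claim that in the connected case the resulting pairs exhaust all pairs is not proved.

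The disconnected case is worse, because a matroid decomposition of the first-factor columns does not split $\tens{T}$. Take $\tens{T}=\vect{x}\otimes\vect{x}\otimes\vect{y}+\vect{x}\otimes\vect{y}\otimes\vect{x}+\vect{y}\otimes\vect{x}\otimes\vect{x}$. It is concise with $d=2<3=r$, and this decomposition is minimum. Its first-factor columns $\vect{x},\vect{x},\vect{y}$ have matroid components spanning $\RR\vect{x}$ and $\RR\vect{y}$. Yet $\tens{T}\notin\Sym^3(\RR\vect{x})\oplus\Sym^3(\RR\vect{y})$, since its polar space contains $\vect{x}\otimes\vect{y}+\vect{y}\otimes\vect{x}$, so \cref{lem:small-polar-blocks} has nothing to act on. The column matroid of $\matr{A}$ alone does not control the polar space. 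You would have to extract such a splitting from the failure hypothesis, and neither that nor the ``fallback'' is argued.

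The missing idea is a second family of admissible functionals, which your opening reformulation discards. It is false that a functional kills at least $d-1$ columns only when its kernel contains $d-1$ independent columns. $\eta_{\matr{A}}^{\pitchfork}$ counts indices, so dependent killed columns still count. In your notation, let $k,l\notin\set{S}$, i.e.\ $c_k=c_l=0$. Then $\alpha_k+\alpha_l$ kills $\vect{a}_0$ together with the $d-2$ columns $\vect{a}_{j_m}$, $m\ne k,l$, even though $\vect{a}_0$ lies in their span. If this functional fails, then since the pure squares vanish, $\tens{T}(\alpha_k,\alpha_l,\cdot)=\vect{0}$. So under total failure, $\tens{T}(\alpha_k,\alpha_l,\cdot)$ vanishes whenever $k,l$ are both in $\set{S}$ or both outside it.

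This finishes the proof by pigeonhole. Because the pure squares vanish, every coefficient of $\tens{T}$ with a repeated index vanishes. So $\tens{T}\ne\tens{0}$ gives distinct $i,j,k$ with $\tens{T}(\alpha_i,\alpha_j,\alpha_k)\ne0$, which makes all three pair contractions among $i,j,k$ nonzero. But two of $c_i,c_j,c_k$ are both zero or both nonzero, which is a contradiction. This is the paper's proof: one basis and one extra column suffice, with no matroid connectivity, basis exchange, or block splitting.
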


\begin{proof}
Choose independent columns
$\vect{a}_1,\ldots,\vect{a}_d$ of $\matr{A}$ and one further column
$\vect{a}_{d+1}=\sum_{i=1}^d c_i\vect{a}_i$.  Let
$\epsilon_1,\ldots,\epsilon_d$ be the dual basis, and set
$\vect{b}(\beta,\gamma)
=(\beta\otimes\gamma\otimes\operatorname{id})(\tens{T})$ and
$\vect{q}(\beta)=\vect{b}(\beta,\beta)$.  If
$\vect{q}(\epsilon_i)\ne\vect{0}$ for some $i$, then $\epsilon_i$ is
admissible for $\eta_{\matr{A}}^{\pitchfork}$ and annihilates the
other $d-1$ chosen basis columns.

Suppose that $\vect{q}(\epsilon_i)=\vect{0}$ for every $i$.  Since
$\tens{T}$ is symmetric, these vanishings kill every coefficient with
a repeated index.  Since $\tens{T}\ne\tens{0}$, there are therefore distinct
$i,j,k$ with
$\tens{T}(\epsilon_i,\epsilon_j,\epsilon_k)\ne0$.  In particular,
$\vect{b}(\epsilon_p,\epsilon_q)\ne\vect{0}$ for every pair
$p,q\in\{i,j,k\}$, $p\ne q$.  Among $c_i,c_j,c_k$, choose $c_p,c_q$
that are either both zero or both nonzero.  In the first case take
$\alpha=\epsilon_p+\epsilon_q$; in the second take
$\alpha=c_q\epsilon_p-c_p\epsilon_q$.  Then $\alpha$ annihilates
$\vect{a}_{d+1}$ and the $d-2$ basis columns other than
$\vect{a}_p,\vect{a}_q$, while
$\vect{q}(\alpha)$ is a nonzero scalar multiple of
$\vect{b}(\epsilon_p,\epsilon_q)$.  Thus $\alpha$ is admissible for
$\eta_{\matr{A}}^{\pitchfork}$ and annihilates at least $d-1$
selected columns.
\end{proof}

The preceding two lemmas combine into the hyperplane-reduction step
used in the final assembly.

\begin{proposition}[Hyperplane reduction for the rank gap]
\label{prop:hyperplane-gap-reduction}
Let $\set{U}$ be a real vector space and let
$\tens{0}\ne\tens{T}\in\Sym^3(\set{U})$.  Put
$d=d(\tens{T})$.  Then
$\DeltaT(\tens{T})\le\Mthree(d-1)-d+1$.
\end{proposition}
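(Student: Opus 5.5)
We first reduce to the concise case by \cref{lem:concise-reduction}: both ranks and $d$ are unchanged when $\set{U}$ is replaced by $\Ess(\tens{T})$. So we assume $\tens{T}$ is nonzero and concise in a space of dimension $d\ge1$, and fix a minimum CP decomposition with first factor matrix $\matr{A}$. We then split according to whether the rank excess vanishes.

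If $r(\tens{T})=d$, then $e(\tens{T})=0$. As in the first step of the proof of \cref{prop:linear-excess-main}, the factor lists are then linearly independent. The alternating identities \eqref{eq:alternating-identities} therefore force every summand to be a displayed cube, so $\DeltaT(\tens{T})=0$. The target bound $\Mthree(d-1)-d+1$ is nonnegative, since a concise cubic in $d-1$ variables with $r=d-1$ exists; for instance the diagonal cubic has $s=d-1$, so $\Mthree(d-1)\ge d-1$. The case $d=1$ is trivial, since then $0\le 0$.

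If $r(\tens{T})>d$, \cref{lem:transverse-column-hyperplane} gives $\eta_{\matr{A}}^{\pitchfork}\ge d-1$. Substituting this into \cref{lem:adaptive-bound}, using the second entry of the minimum, gives
\[
\DeltaT(\tens{T})\le\Mthree(d-1)-\eta_{\matr{A}}^{\pitchfork}\le\Mthree(d-1)-d+1.
\]
To double-check this step, I would unwind \cref{lem:adaptive-bound} in this case. A functional $\alpha$ annihilating at least $d-1$ columns has a polar $\tens{Q}=\alpha\mathbin{\lrcorner}\tens{T}$ of rank $h\le r-(d-1)$. The condition $\alpha^2\mathbin{\lrcorner}\tens{T}\ne\vect{0}$ makes $\tens{Q}$ transverse to $\ker\alpha$. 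The transverse case of \cref{lem:polar-completion} then gives $s\le h+\Mthree(d-1)\le r-d+1+\Mthree(d-1)$, and subtracting $r$ gives the claim.

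The main obstacle is bookkeeping rather than a genuine difficulty, since the two cited lemmas do the real work. The points to watch are that the case $r=d$ is handled separately, because \cref{lem:transverse-column-hyperplane} requires $r>d$, and that the reduction to the concise case is legitimate. If the paper also wants the second inequality of \eqref{eq:uniform-polar-bound}, that would follow separately from the Blekherman--Teitler and shifted maximum-rank estimates. It is not needed for this proposition.
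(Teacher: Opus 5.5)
Your proposal is correct and matches the paper's proof: reduce to the concise case, handle $r(\tens{T})=d$ using the nonnegativity of $\Mthree(d-1)-d+1$ via the Fermat cubic, and for $r(\tens{T})>d$ feed $\eta_{\matr{A}}^{\pitchfork}\ge d-1$ from \cref{lem:transverse-column-hyperplane} into the transverse branch of \cref{lem:adaptive-bound}. The only cosmetic difference is that for $e(\tens{T})=0$ you run the alternating-identity argument directly instead of citing \cref{prop:excess-three-zero}, which amounts to the same thing.
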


\begin{proof}
By \cref{lem:concise-reduction}, we may replace $\set{U}$ by
$\Ess(\tens{T})$ and assume that $\tens{T}$ is concise.  The first
flattening gives $r(\tens{T})\ge d$.  If $r(\tens{T})=d$, then
\cref{prop:excess-three-zero}, applied with $e(\tens{T})=0$, gives
$\DeltaT(\tens{T})=0$.  The asserted right-hand side is nonnegative:
$\Mthree(d-1)\ge d-1$ for $d\ge2$ by the Fermat cubic, while it is zero
for $d=1$.  If $r(\tens{T})>d$, choose a minimum CP decomposition with
first factor matrix $\matr{A}$.  By
\cref{lem:transverse-column-hyperplane},
$\eta_{\matr{A}}^{\pitchfork}\ge d-1$; the transverse branch of
\eqref{eq:adaptive-bound} now gives the assertion.
\end{proof}

The following remark clarifies the order of the choices in the adaptive
bound, in particular the dependence on the selected minimum decomposition.
\begin{remark}[Dependence on the minimum decomposition]
For every chosen minimum CP decomposition there are suitable
functionals attaining $\eta_{\matr{A}}$ and
$\eta_{\matr{A}}^{\pitchfork}$, and both adaptive numbers depend on
that decomposition.  The proof does not assert that a single
hyperplane works simultaneously for all minimum decompositions.
\end{remark}

\subsection{Shifted maximum-rank estimate}

The following maximum-rank estimate closes the hyperplane-reduction
bound and supplies the main input for the dimension-only theorem.  The
numerical value of $\gthree$ is supplied by the Alexander--Hirschowitz theorem
\cite{hirschowitz1995polynomial,brambilla2008alexander}.

\begin{proposition}[Shifted maximum real Waring rank]
\label{prop:shifted-maximum-rank}
For every $d\geq1$, we have
$$\Mthree(d)\leq\Vthree(d)=2\gthree(d-1)+1.$$
\end{proposition}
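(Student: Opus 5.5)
The plan is a shifted version of the Blekherman--Teitler argument. Put $g=\gthree(d-1)$, fix $\set{U}\cong\RR^d$, a functional $\alpha$, the hyperplane $\set{H}=\ker\alpha$, and a vector $\vect{z}$ with $\alpha(\vect{z})=1$. Consider the real polynomial map
\[
\Psi:\set{U}^{g}\times\set{U}^{g}\to\Sym^3(\set{U}),\qquad
\Psi(\vect{x},\vect{y})=\sum_{i=1}^{g}\vect{x}_i^3-\sum_{i=1}^{g}\vect{y}_i^3 .
\]
Suppose we find a \emph{regular} base point $p_0=(\vect{x}^0,\vect{y}^0)$ with $\Psi(p_0)=\vect{z}^3$, meaning that $d\Psi_{p_0}$ is surjective. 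The implicit function theorem then gives an open neighbourhood $\set{O}$ of $\vect{z}^3$ contained in $\operatorname{im}\Psi$.

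Given any $\tens{T}\in\Sym^3(\set{U})$, choose $\varepsilon>0$ so small that $\vect{z}^3+\varepsilon\tens{T}\in\set{O}$. Then $\vect{z}^3+\varepsilon\tens{T}=\Psi(p)$ for some $p$. Cubic homogeneity absorbs the scalar $\varepsilon^{-1}$ into the linear forms, so
\[
\tens{T}=\varepsilon^{-1}\Psi(p)-\varepsilon^{-1}\vect{z}^3
\]
is a signed sum of at most $2g+1$ real cubes. This gives $\Mthree(d)\le 2g+1=\Vthree(d)$. For $d=1$ one has $g=0$ and the claim is simply $\Mthree(1)=1$, so we may assume $d\ge2$.

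Everything thus reduces to constructing the regular base point. By the standard computation, $\operatorname{im}d\Psi_{p}=\sum_i \vect{x}_i^2\cdot\set{U}+\sum_i\vect{y}_i^2\cdot\set{U}$, so we need the $2g$ squares $\vect{x}_i^2,\vect{y}_i^2$ to satisfy $\sum_i(\vect{x}_i^2+\vect{y}_i^2)\cdot\set{U}=\Sym^3(\set{U})$. Splitting $\Sym^3(\set{U})=\Sym^3(\set{H})\oplus\vect{z}\Sym^2(\set{H})\oplus\vect{z}^2\set{H}\oplus\RR\vect{z}^3$ shows what is required.
\begin{enumerate}[label=\textup{(\alph*)}]
\item The $\set{H}$-parts of the squares must produce all of $\Sym^3(\set{H})$. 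This is Terracini's lemma at a point of a generic complex decomposition of length $g=\gthree(d-1)$. By Alexander--Hirschowitz, the secant variety $\sigma_g$ of the cubic Veronese of $\set{H}$ fills $\Sym^3(\set{H})$, including the exceptional case $d-1=5$, where $g=8$. Nondegeneracy of the differential is a Zariski-open condition defined over $\RR$, so it holds at real points.
\item The remaining graded pieces, essentially $\vect{z}\Sym^2(\set{H})$ modulo the contributions of the squares, must also be reached.
\end{enumerate}

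The constraint $\Psi(p_0)=\vect{z}^3$ is handled by a \emph{transverse polar degeneration}. Take a real $g$-term decomposition $\vect{v}_1,\dots,\vect{v}_g\in\set{H}$ that is regular as in (a), and set
\[
\vect{x}_i=\vect{v}_i+s_i\vect{z},\qquad \vect{y}_i=\vect{v}_i+t_i\vect{z}.
\]
With this choice the $\Sym^3(\set{H})$-part of $\Psi(p_0)$ cancels. The remaining components are $3\sum_i(s_i-t_i)\vect{v}_i^2\vect{z}$, $3\sum_i(s_i^2-t_i^2)\vect{v}_i\vect{z}^2$ and $\sum_i(s_i^3-t_i^3)\vect{z}^3$. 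I would choose the real parameters $s_i,t_i$ so that the first two vanish and the last equals one. This imposes linear and quadratic conditions with far more unknowns ($2g$) than equations in the $\vect{z}^3$, $\vect{z}^2\set{H}$ directions; the $\vect{z}\Sym^2(\set{H})$ condition $\sum_i(s_i-t_i)\vect{v}_i^2=0$ needs a linear relation among the $\vect{v}_i^2$. If no such relation is available, I would allow small $\set{H}$-perturbations $\vect{y}_i=\vect{v}_i'+t_i\vect{z}$ with $\vect{v}'$ a second decomposition of the same cubic.

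The \emph{regular local completion} step then checks that at such a point the tangent directions $\vect{x}_i^2\cdot\vect{z}$, $\vect{y}_i^2\cdot\vect{z}$ and the mixed terms $2s_i\vect{v}_i\vect{z}\cdot\set{H}$, which differ between the two families because $s_i\ne t_i$, together fill $\vect{z}\Sym^2(\set{H})\oplus\vect{z}^2\set{H}\oplus\RR\vect{z}^3$ modulo $\Sym^3(\set{H})$. If a single base point is hard to exhibit, I would show instead that the locus $\{p:\Psi(p)\in\RR\vect{z}^3\}$ meets the Zariski-open regular locus at a real point, for instance by a dimension count and irreducibility.

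The main obstacle is exactly this regular local completion, namely item (b). A naive choice with both families lying in $\set{H}$ yields only $\sum\vect{v}_i^2\cdot\set{U}$, which is far too small in the $\vect{z}\Sym^2(\set{H})$ component for large $d$. The argument must therefore exploit that the two families differ transversally, through $s_i\ne t_i$, so that the $\vect{z}$-direction derivatives contribute the missing quadrics. My first attempt would be to verify surjectivity of $d\Psi$ on the explicit family above by a block-triangular rank computation along the grading by powers of $\vect{z}$. The $\Sym^3(\set{H})$ block is supplied by Terracini and Alexander--Hirschowitz; the $\vect{z}\Sym^2(\set{H})$ block would be supplied by the span of the $\vect{v}_i\otimes\set{H}$ symmetrized products, which is all of $\Sym^2(\set{H})$ as soon as the $\vect{v}_i$ span $\set{H}$; the lower blocks are then immediate.
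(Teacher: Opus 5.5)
There is a genuine gap, and it is the step you flag as the main obstacle. The regular base point you need does not exist in general, so your reduction replaces the proposition by a false statement.

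A regular $p_0$ with $\Psi(p_0)=\vect{z}^3$ would make $\vect{z}^3$ an interior point of the set of cubics of real rank at most $2g$.

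\emph{Case $d=2$ ($g=1$).} Let $\vect{w}$ complete $\vect{z}$ to a basis. For $\varepsilon>0$, $\vect{z}^3-\varepsilon\vect{z}\vect{w}^2$ has three distinct real linear factors, hence real rank $3$, and it tends to $\vect{z}^3$. Directly: $d\Psi$ is onto at $(\vect{x},\vect{y})$ only if $\vect{x},\vect{y}$ are independent. But then $\vect{x}^3-\vect{y}^3$ has three distinct complex linear factors and cannot equal $\vect{z}^3$.

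\emph{Case $d=3$ ($2g=4$).} A set of at most five points of $\mathbb{P}^2$ has linearly dependent cubes only if it consists of five collinear points. By apolarity, every expression $\vect{z}^3=\sum_{i=1}^{4}\vect{u}_i^3$ therefore has all $\vect{u}_i$ in a $2$-dimensional subspace $\set{P}\ni\vect{z}$. Hence $\sum_i\vect{u}_i^2\cdot\set{U}\subseteq\Sym^2(\set{P})\cdot\set{U}$ has dimension at most $7<10$.

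\emph{The fallback.} The locus $\{p:\Psi(p)\in\RR\vect{z}^3\}$ does not help. A regular point over $\vect{0}$ would even give $\Mthree(d)\le 2g$, contradicting $\Mthree(2)=3$ and $\Mthree(3)=5$.

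\emph{The explicit family.} It also fails structurally. Whenever $g(d-1)=\binom{d+1}{3}$ (e.g.\ $d-1=1,2,4,7,8$), regularity of $(\vect{v}_i)$ makes the $g(d-1)$ products $\vect{v}_i^2\vect{h}_j$ independent, where $\vect{h}_j$ is a basis of $\set{H}$. So the squares $\vect{v}_i^2$ are independent, and $\sum_i(s_i-t_i)\vect{v}_i^2=0$ forces $s_i=t_i$. Then $\sum_i(s_i^3-t_i^3)=1$ is impossible. Moreover, $\Phi_g$ is then a local diffeomorphism at $(\vect{v}_i)$, so no nearby second decomposition $\vect{v}'$ exists.

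\emph{The missing idea.} Place the submersion base point in $\Sym^3(\set{H})$, not in $\Sym^3(\set{U})$, and make it a sum of cubes of \emph{independent} vectors, since these can be embedded in a regular $g$-tuple. The paper proceeds as follows.
\begin{enumerate}[label=\textup{(\roman*)}]
\item Pick $\alpha$ with $\tens{T}(\alpha,\alpha,\alpha)\ne0$, and write $\alpha\mathbin{\lrcorner}\tens{T}=\sum_{i=1}^{h}\delta_i\vect{w}_i^2$ with $\vect{w}_1,\dots,\vect{w}_{h-1}\in\set{H}$ and $\alpha(\vect{w}_h)\ne0$.
\item Tilt this basis by a Cayley deformation preserving the quadric, so that every $\alpha(\vect{v}_i(\varepsilon))\ne0$.
\item Subtract $h$ cubes to obtain $\tens{R}_\varepsilon\in\Sym^3(\set{H})$ with $\varepsilon\tens{R}_\varepsilon\to-\tens{P}$, where $\tens{P}$ is a sum of $q=h-1$ cubes of independent vectors.
\item Extend $(-\vect{p}_i)$ to a regular $g$-tuple and cancel the extra $g-q$ cubes by their negatives. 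This gives a regular $(2g-q)$-term representation of $-\tens{P}$.
\item The submersion theorem in $\Sym^3(\set{H})$ then gives $s(\tens{R}_\varepsilon)\le 2g-q$, so $s(\tens{T})\le h+2g-q=2g+1$.
\end{enumerate}
The one cube beyond $2g$ is paid for by the $h$ transverse cubes set against the $q=h-1$ independent ones, not by a fixed cube $\vect{z}^3$.
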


\begin{remark}[Comparison with existing maximum-rank bounds]
The estimate above differs essentially from the same-dimension bound
$\Mthree(d)\leq2\gthree(d)$ of Blekherman and Teitler.  Their
Waring-rank specialization gives $\Mthree(5)\leq15$ and
$\Mthree(6)\leq20$
\cite[Corollary~9]{blekherman2015maximum}, whereas
\cref{prop:shifted-maximum-rank} gives $\Mthree(5)\leq11$ and
$\Mthree(6)\leq17$.  The Alexander--Hirschowitz theorem supplies
$\gthree(m)=\lceil(m+1)(m+2)/6\rceil$ for $m\geq1$, $m\ne5$, together
with the exceptional value $\gthree(5)=8$, but it determines generic
complex rank rather than maximum real rank
\cite{hirschowitz1995polynomial,brambilla2008alexander}.  Related
maximum-rank refinements of Bernardi and Staffolani and of Buczyński,
Han, Mella, and Teitler are established over algebraically closed
fields under additional geometric hypotheses on the variety or its
secant loci; they therefore do not imply the shifted real estimate
above \cite{bernardi2022maximal,buczynski2018locus}.  In dimension
three, the exact value $\Mthree(3)=5=2\gthree(2)+1$ was determined by
Banchi \cite[Theorem~2.1]{banchi2015ternary}, but this low-dimensional
identity does not yield the general assertion.
\end{remark}

Its proof uses two independent ingredients: regular completion for the real
cube-sum map and a rational deformation of the quadratic polar to transverse
directions.
For a real vector space $\set{H}$ and an integer $g\geq1$, write
$\Phi_g:\set{H}^g\to\Sym^3(\set{H})$ for the cube-sum map
$\Phi_g(\vect{u}_1,\ldots,\vect{u}_g)=\sum_i\vect{u}_i^3$.
% LEAN: only polynomial nonvanishing is covered here; dominance and submersion remain open.
The Alexander--Hirschowitz theorem supplies the full-rank Jacobian used
below; $\operatorname{GL}(\set{H})$-equivariance then gives the required
regular completion.

\begin{lemma}[Regular cube completion]
\label{lem:regular-cube-completion}
Let $\dim\set{H}=m\geq1$ and $g=\gthree(m)$.  Then $g\geq m$.
Moreover, every ordered independent family
$\vect{p}_1,\ldots,\vect{p}_q\in\set{H}$ with $0\leq q\leq m$
extends to a point
$(\vect{p}_1,\ldots,\vect{p}_q,
\vect{p}_{q+1},\ldots,\vect{p}_g)$ at which
$D\Phi_g$ is surjective.
\end{lemma}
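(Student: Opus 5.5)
Three steps: first $g\ge m$, then existence of some point where $D\Phi_g$ is surjective, then moving that point by $\operatorname{GL}(\set{H})$ so that its first $q$ entries become the prescribed family.

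\emph{Step 1: $g\ge m$.} For $m\ge1$, $m\ne5$, the formula \eqref{eq:generic-cubic-rank} gives $g=\lceil (m+1)(m+2)/6\rceil$. We have $(m+1)(m+2)\ge 6m$ exactly when $m^2-3m+2\ge0$, which holds for every integer $m$. For $m=5$ we have $g=8\ge5$. A cleaner argument avoids the formula: a generic cubic is concise, so its Waring decomposition needs at least $m$ summands spanning $\set{H}$.

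\emph{Step 2: a regular point exists.} By Terracini's lemma, $\operatorname{im}D\Phi_g(\vect{u})=\Span\{\vect{u}_i^2\vect{w}:i\le g,\ \vect{w}\in\set{H}\}$. The Alexander--Hirschowitz theorem says the $g$-th secant variety of the cubic Veronese fills $\Sym^3(\set{H}_{\mathbb C})$. Equivalently, the complex cube-sum map is dominant, so its Jacobian has full rank $\dim\Sym^3$ at a generic complex point. The locus of $(\vect{u}_i)$ where all maximal minors of the Jacobian vanish is a real-defined proper Zariski-closed subset of $\set{H}^g$. Real points are Zariski dense in $\set{H}_{\mathbb C}^g$. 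Hence the set $\set{G}\subseteq\set{H}^g$ of real points with surjective $D\Phi_g$ is nonempty and Zariski open in $\set{H}^g$. The exceptional case $m=5$ is harmless here, since $g=8$ is precisely the generic rank and $\sigma_8$ is still the whole space. This density argument also appears in \cref{lem:quaternary-low-rank-polar}.

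\emph{Step 3: equivariant adjustment.} For $\matr{G}\in\operatorname{GL}(\set{H})$ we have $\Phi_g(\matr{G}\vect{u}_1,\ldots,\matr{G}\vect{u}_g)=\matr{G}^{\otimes3}\Phi_g(\vect{u})$. Since $\matr{G}^{\otimes3}$ is invertible on $\Sym^3(\set{H})$, the set $\set{G}$ is $\operatorname{GL}(\set{H})$-stable.

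The condition that $\vect{u}_1,\ldots,\vect{u}_q$ be linearly independent is also a nonempty Zariski-open condition, because $q\le m$. Two nonempty Zariski-open subsets of the irreducible space $\set{H}^g$ intersect. So there is $\vect{u}\in\set{G}$ with $\vect{u}_1,\ldots,\vect{u}_q$ independent.

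Choose $\matr{G}\in\operatorname{GL}(\set{H})$ with $\matr{G}\vect{u}_i=\vect{p}_i$ for $i\le q$; this is possible by extending both independent families to bases. Then $(\matr{G}\vect{u}_1,\ldots,\matr{G}\vect{u}_g)$ is the required extension, with $\vect{p}_{i}=\matr{G}\vect{u}_i$ for $i>q$.

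The only delicate point is Step 2, passing from the complex generic statement to a real regular point. It rests on the Jacobian rank being governed by polynomial minors that are defined over $\RR$. Their nonvanishing somewhere on $\set{H}_{\mathbb C}^g$ implies nonvanishing somewhere on $\set{H}^g$, because a nonzero real polynomial cannot vanish on all real points.
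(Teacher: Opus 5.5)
Your proposal is correct and follows essentially the same route as the paper: Alexander--Hirschowitz dominance plus generic smoothness gives a complex regular point, and a real-defined nonzero Jacobian minor combined with the independence condition gives a real regular tuple whose first $q$ entries are independent. The paper multiplies the two polynomials where you intersect two nonempty Zariski-open sets, which amounts to the same thing, and both finish with $\operatorname{GL}(\set{H})$-equivariance and transitivity on ordered $q$-frames. The only minor difference is that you read off $g\ge m$ from the explicit formula for $\gthree(m)$. The paper instead deduces it from the dimension count $gm\ge\binom{m+2}{3}$ at the regular point, which avoids treating $m=5$ separately.
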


\begin{proof}
By the definition of $g=\gthree(m)$ and the Alexander--Hirschowitz theorem,
with coefficients absorbed into complex cube roots, the complexification of
$\Phi_g$ is dominant.  In characteristic zero,
generic smoothness, equivalently the Jacobian criterion for a dominant
polynomial map, therefore gives a complex $g$-tuple at which $D\Phi_g$ is
surjective.  Thus, for
$N=\dim\Sym^3(\set{H})=\binom{m+2}{3}$, one has $gm\ge N$, and hence
$$g\ge\frac{(m+1)(m+2)}{6}\ge m.$$  
Since $\Phi_g$ is defined over $\RR$, some
$N\times N$ Jacobian minor $J$ is a nonzero polynomial with real
coefficients.  Multiplying $J$ by a nonzero $q\times q$ coordinate
minor expressing the independence of the first $q$ vectors (or by
$1$ if $q=0$) still gives a nonzero real polynomial.  It is nonzero at
some real point, so there is a regular $g$-tuple whose first $q$
entries are independent.  The equivariance
$$\Phi_g(\matr{G}\vect{u}_1,\ldots,\matr{G}\vect{u}_g)
=\Sym^3(\matr{G})\Phi_g(\vect{u}_1,\ldots,\vect{u}_g)$$ and the transitivity of
$\operatorname{GL}(\set{H})$ on ordered $q$-frames give the asserted
completion of any prescribed family.
\end{proof}

The required deformation can be expressed rationally, avoiding a matrix
exponential and second-order remainder estimates.

\begin{lemma}[Rational orthogonal deformation]
\label{lem:cayley-deformation}
Let $\matr{B},\matr{A}$ be real square matrices with
$\matr{A}^{\mathsf T}\matr{B}+\matr{B}\matr{A}=\matr{0}$.
For all sufficiently small $\varepsilon$, put
\[
 \matr{K}_{\varepsilon}=\matr{I}-\frac{\varepsilon}{2}\matr{A},
 \ \ 
 \matr{G}_{\varepsilon}
 =\left(\matr{I}+\frac{\varepsilon}{2}\matr{A}\right)
       \matr{K}_{\varepsilon}^{-1}.
\]
Then $\matr{K}_{\varepsilon}$ is invertible and
\[
 \matr{G}_{\varepsilon}^{\mathsf T}\matr{B}\matr{G}_{\varepsilon}=\matr{B},
 \ \ 
 \matr{G}_{\varepsilon}-\matr{I}
 =\varepsilon\matr{A}\matr{K}_{\varepsilon}^{-1},
 \ \ 
 \matr{A}\matr{K}_{\varepsilon}^{-1}\longrightarrow\matr{A}
 \quad(\varepsilon\to0).
\]
\end{lemma}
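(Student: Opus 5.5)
This is the Cayley transform, so the plan is a direct verification in four steps.

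\emph{Invertibility of $\matr{K}_\varepsilon$.} For $|\varepsilon|\,\|\matr{A}\|<2$ in any submultiplicative norm, the Neumann series $\sum_{k\ge0}(\varepsilon\matr{A}/2)^k$ converges and inverts $\matr{K}_\varepsilon$. Equivalently, $\det\matr{K}_\varepsilon$ is a polynomial in $\varepsilon$ equal to $1$ at $\varepsilon=0$, so it is nonzero near $0$. I will also record that $\matr{I}+\frac{\varepsilon}{2}\matr{A}$ and $\matr{K}_\varepsilon^{-1}$ commute, since both are rational functions of $\matr{A}$. Hence $\matr{G}_\varepsilon=\matr{K}_\varepsilon^{-1}(\matr{I}+\frac{\varepsilon}{2}\matr{A})$ as well.

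\emph{The identity $\matr{G}_\varepsilon-\matr{I}=\varepsilon\matr{A}\matr{K}_\varepsilon^{-1}$.} Write
\[
\matr{I}+\tfrac{\varepsilon}{2}\matr{A}=\matr{K}_\varepsilon+\varepsilon\matr{A}
\]
and multiply on the right by $\matr{K}_\varepsilon^{-1}$. The convergence $\matr{A}\matr{K}_\varepsilon^{-1}\to\matr{A}$ then follows from continuity of matrix inversion at $\matr{I}$, since $\matr{K}_\varepsilon\to\matr{I}$.

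\emph{The invariance $\matr{G}_\varepsilon^{\mathsf T}\matr{B}\matr{G}_\varepsilon=\matr{B}$.} This is the only step with content. From $\matr{A}^{\mathsf T}\matr{B}=-\matr{B}\matr{A}$ I get the two intertwining relations
\[
(\matr{I}+\tfrac{\varepsilon}{2}\matr{A})^{\mathsf T}\matr{B}=\matr{B}(\matr{I}-\tfrac{\varepsilon}{2}\matr{A})=\matr{B}\matr{K}_\varepsilon,
\qquad
\matr{K}_\varepsilon^{\mathsf T}\matr{B}=\matr{B}(\matr{I}+\tfrac{\varepsilon}{2}\matr{A}).
\]
Inverting the second relation gives $\matr{K}_\varepsilon^{-\mathsf T}\matr{B}=\matr{B}(\matr{I}+\frac{\varepsilon}{2}\matr{A})^{-1}$ once $\matr{I}+\frac{\varepsilon}{2}\matr{A}$ is also invertible; this holds for the same range of small $\varepsilon$, which I will include in the smallness hypothesis. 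I avoid inverting $\matr{B}$, since $\matr{B}$ need not be invertible. Then
\[
\matr{G}_\varepsilon^{\mathsf T}\matr{B}\matr{G}_\varepsilon
=\matr{K}_\varepsilon^{-\mathsf T}(\matr{I}+\tfrac{\varepsilon}{2}\matr{A})^{\mathsf T}\matr{B}(\matr{I}+\tfrac{\varepsilon}{2}\matr{A})\matr{K}_\varepsilon^{-1}
=\matr{K}_\varepsilon^{-\mathsf T}\matr{B}\matr{K}_\varepsilon(\matr{I}+\tfrac{\varepsilon}{2}\matr{A})\matr{K}_\varepsilon^{-1}.
\]
Commutativity lets me cancel $\matr{K}_\varepsilon$ against $\matr{K}_\varepsilon^{-1}$, leaving $\matr{K}_\varepsilon^{-\mathsf T}\matr{B}(\matr{I}+\frac{\varepsilon}{2}\matr{A})$. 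By the second relation, $\matr{B}(\matr{I}+\frac{\varepsilon}{2}\matr{A})=\matr{K}_\varepsilon^{\mathsf T}\matr{B}$, so the expression equals $\matr{K}_\varepsilon^{-\mathsf T}\matr{K}_\varepsilon^{\mathsf T}\matr{B}=\matr{B}$. In this form only $\matr{K}_\varepsilon$ needs to be invertible.

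\emph{Main obstacle.} The only real pitfall is exactly this bookkeeping: keeping the factors in the right order and using the transposed relation, so that no inverse of $\matr{B}$ is ever required.
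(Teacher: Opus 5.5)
Your proof is correct and is essentially the paper's argument. It verifies the Cayley transform directly, using only invertibility of $\matr{K}_\varepsilon$; the difference identity comes from $\matr{I}+\frac{\varepsilon}{2}\matr{A}-\matr{K}_\varepsilon=\varepsilon\matr{A}$, and the limit from continuity of inversion. The paper merges your two intertwining relations into the single expanded identity $\matr{L}_\varepsilon^{\mathsf T}\matr{B}\matr{L}_\varepsilon=\matr{K}_\varepsilon^{\mathsf T}\matr{B}\matr{K}_\varepsilon$ with $\matr{L}_\varepsilon=\matr{I}+\frac{\varepsilon}{2}\matr{A}$, so it needs neither your commutativity step nor your aside about inverting $\matr{L}_\varepsilon$.
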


\begin{proof}
% LEAN: cayley_preserves_bilinear, cayleyDenominator_eventually_unit,
% cayley_first_order_exact, cayley_first_order_limit; assembled in cayley_family_spec.
Continuity and $\det\matr{K}_0=1$ give invertibility near zero.
Writing $\matr{L}_{\varepsilon}=\matr{I}+\varepsilon\matr{A}/2$,
the skew relation gives
$\matr{L}_{\varepsilon}^{\mathsf T}\matr{B}\matr{L}_{\varepsilon}
=\matr{K}_{\varepsilon}^{\mathsf T}\matr{B}\matr{K}_{\varepsilon}$.
Multiplying by $\matr{K}_{\varepsilon}^{-\mathsf T}$ and
$\matr{K}_{\varepsilon}^{-1}$ proves preservation of $\matr{B}$.
The difference identity follows from
$\matr{L}_{\varepsilon}-\matr{K}_{\varepsilon}=\varepsilon\matr{A}$;
the limit follows from continuity of inversion at $\matr{I}$.
\end{proof}

\begin{proof}[Proof of \cref{prop:shifted-maximum-rank}]
The assertion is immediate for $d=1$.  Let $d\geq2$, put $m=d-1$
and $g=\gthree(m)$.  The zero form is harmless, so take a nonzero
$\tens{T}\in\Sym^3(\set{U})$, where $\dim\set{U}=d$.  Since its
associated cubic on $\set{U}^*$ is nonzero, choose $\alpha\in\set{U}^*$ such that
$\tens{T}(\alpha,\alpha,\alpha)\ne0$, and set
$\set{H}=\ker\alpha$ and
$\tens{Q}=\alpha\mathbin{\lrcorner}\tens{T}$.  Write
$h=\rank\tens{Q}$ and $\set{W}=\Ess(\tens{Q})$.  Regard
$\tens{Q}$ as the isomorphism
$\tens{Q}^{\sharp}:\set{W}^*\to\set{W}$, let $\tens{B}$ be its inverse
symmetric bilinear form on $\set{W}$, put
$\alpha_{\set{W}}=\alpha|_{\set{W}}$, and set
$\vect{n}=\tens{Q}^{\sharp}\alpha_{\set{W}}$.  Then
$\tens{B}(\vect{n},\vect{w})=\alpha_{\set{W}}(\vect{w})$ for
$\vect{w}\in\set{W}$.  Moreover,
\[
\tens{B}(\vect{n},\vect{n})
=\tens{Q}(\alpha_{\set{W}},\alpha_{\set{W}})
=\tens{T}(\alpha,\alpha,\alpha)\ne0.
\]
Consequently
$\set{K}=\set{W}\cap\set{H}=\vect{n}^{\perp_{\tens{B}}}$ is nondegenerate
and has dimension $q=h-1$.  Since $\set{K}\subseteq\set{H}$, one has
$q\le m$, as required in \cref{lem:regular-cube-completion}.

Choose a $\tens{B}$-orthogonal basis
$\vect{w}_1,\ldots,\vect{w}_q,\vect{w}_h=\vect{n}$ of $\set{W}$.
Set $\beta_i=\tens{B}(\vect{w}_i,\vect{w}_i)$,
$\delta_i=\beta_i^{-1}$, and
$\rho=\alpha(\vect{w}_h)\ne0$.  Thus
$\tens{Q}=\sum_{i=1}^h\delta_i\vect{w}_i^2$.  If $q>0$, define
an endomorphism $\matr{A}$ of $\set{W}$ by
\[
\matr{A}\vect{w}_i=\vect{w}_h\ (i\leq q),
\ \ 
\matr{A}\vect{w}_h=-\sum_{i=1}^q
\frac{\beta_h}{\beta_i}\vect{w}_i;
\]
if $q=0$, take $\matr{A}=\matr{0}$.  Directly on this basis, $\matr{A}$ is $\tens{B}$-skew.
Apply \cref{lem:cayley-deformation} in this basis and put
$\vect{v}_i(\varepsilon)=\matr{G}_{\varepsilon}\vect{w}_i$.
Preservation of $\tens{B}$ also preserves its inverse tensor, giving
$\tens{Q}=\sum_i\delta_i\vect{v}_i(\varepsilon)^2$.
The exact difference identity and $\matr{K}_{\varepsilon}^{-1}\to\matr{I}$ give
\[
 \frac{\alpha(\vect{v}_i(\varepsilon))}{\varepsilon}\longrightarrow\rho
 \quad(i\le q),\ \ 
 \alpha(\vect{v}_h(\varepsilon))\longrightarrow\rho\ne0.
\]
For sufficiently small nonzero $\varepsilon$, all these values are
nonzero.  Define
\[
\tens{S}_{\varepsilon}
=\sum_{i=1}^h
\frac{\delta_i}{\alpha(\vect{v}_i(\varepsilon))}
\vect{v}_i(\varepsilon)^3,
\ \ 
\tens{R}_{\varepsilon}=\tens{T}-\tens{S}_{\varepsilon}.
\]
Our contraction normalization gives
$\alpha\mathbin{\lrcorner}\vect{v}^3=\alpha(\vect{v})\vect{v}^2$.
Thus $\alpha\mathbin{\lrcorner}\tens{S}_{\varepsilon}=\tens{Q}$
and, since the kernel of contraction by $\alpha$ on cubics is
$\Sym^3(\set{H})$, one has
$\tens{R}_{\varepsilon}\in\Sym^3(\set{H})$.  Moreover,
\[
\begin{aligned}
\frac{\vect{v}_i(\varepsilon)-\vect{w}_i}{\varepsilon}
 &\longrightarrow\vect{w}_h,\ \ 
\frac{\varepsilon\delta_i}
 {\alpha(\vect{v}_i(\varepsilon))}\vect{v}_i(\varepsilon)^3
 \longrightarrow\frac{\delta_i}{\rho}\vect{w}_i^3
 (i\le q),\\
\frac{\varepsilon\delta_h}
 {\alpha(\vect{v}_h(\varepsilon))}\vect{v}_h(\varepsilon)^3
 &\longrightarrow\tens{0},\ \
\varepsilon\tens{R}_{\varepsilon}
 \longrightarrow-\tens{P},\ \ 
\tens{P}=\rho^{-1}\sum_{i=1}^q\delta_i\vect{w}_i^3.
\end{aligned}
\]
Absorb the nonzero real coefficients into cube roots and write
$\tens{P}=\sum_{i=1}^q\vect{p}_i^3$, where the
$\vect{p}_i$ are independent; for $q=0$, this is the empty sum.

By \cref{lem:regular-cube-completion}, we extend the independent family
$(-\vect{p}_i)_{i=1}^q$ to a regular $g$-tuple
$(\vect{\ell}_i)_{i=1}^g$.  Appending
$(-\vect{\ell}_i)_{i=q+1}^g$ gives a
$(2g-q)$-cube representation of $-\tens{P}$.  The differential
of the corresponding cube-sum map is surjective there because its
first $g$ entries already form a regular tuple.  The real submersion
theorem \cite{lee2012smooth} therefore gives a neighborhood of $-\tens{P}$ consisting of
cubics of real Waring rank at most $2g-q$.  For sufficiently small
finite $\varepsilon\ne0$, it contains
$\varepsilon\tens{R}_{\varepsilon}$.  Multiplication by a nonzero
real scalar does not change Waring rank, so
$s(\tens{R}_{\varepsilon})\leq2g-q$.

Finally, $\tens{S}_{\varepsilon}$ is a sum of at most $h$ real cubes.
Since $q=h-1$, we conclude that
$s(\tens{T})\leq h+2g-q=2g+1=\Vthree(d)$.
\end{proof}

\subsection{Exceptional six-variable endpoint}

The following six-dimensional lemma provides the geometric input for
the exceptional estimate $\Mthree(6)\le16$.  Its proof combines the
Stiefel--Whitney classes of real projective space
\cite[Section~4]{milnor1974classes}, the equality between semialgebraic
dimension and the dimension of the real Zariski closure
\cite[Proposition~2.8.2]{bochnak1998real}, and Bertini irreducibility
\cite[Section~37.32]{stacks-project}.

\begin{lemma}[Singular transverse polar in six variables]
\label{lem:singular-transverse-six}
Let $\dim\set{U}=6$ and
$\tens{0}\ne\tens{T}\in\Sym^3(\set{U})$.  There is
$\alpha\in\set{U}^*\setminus\{0\}$ such that, for
$\tens{Q}_{\alpha}=\alpha\mathbin{\lrcorner}\tens{T}$,
we have $\rank\tens{Q}_{\alpha}\le5$ and
$\alpha^2\mathbin{\lrcorner}\tens{T}\ne\vect{0}$.
Equivalently, $\det\tens{Q}_{\alpha}=0$ and
$\Ess(\tens{Q}_{\alpha})\not\subseteq\ker\alpha$.
\end{lemma}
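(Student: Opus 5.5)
We treat the case $\det\tens{Q}_{\alpha}\equiv0$ first, then prove by topology that the sextic $\alpha\mapsto\det\tens{Q}_{\alpha}$ has real zeros, and finally show that at least one such zero lies off the locus
\[
 \set{Z}=\{\alpha:\alpha^2\mathbin{\lrcorner}\tens{T}=\vect{0}\}.
\]
If $\det\tens{Q}_{\alpha}$ vanishes identically on $\set{U}^*$, the statement reduces to finding $\alpha$ with $\alpha^2\mathbin{\lrcorner}\tens{T}\ne\vect{0}$. Such $\alpha$ exists because $\tens{T}\ne\tens{0}$: by polarization the quadratic map $\alpha\mapsto\alpha^2\mathbin{\lrcorner}\tens{T}$ is not identically zero. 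So we may assume that $F(\alpha)=\det\tens{Q}_{\alpha}$ is a nonzero real sextic form on $\set{U}^*$, and that $\set{Z}$ is a proper real algebraic cone.

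\emph{Step 1: $F$ has a nonzero real zero.} Suppose instead that $\tens{Q}_{\alpha}$ is nonsingular for every $\alpha\ne0$. Then the signature of $\tens{Q}_{\alpha}$ is constant on the connected sphere $S^5$. Since $\tens{Q}_{-\alpha}=-\tens{Q}_{\alpha}$, the signature must be $(3,3)$. The positive eigenspaces form a rank-three subbundle $E^+\subseteq S^5\times\set{U}$, and the relation $E^+(-\alpha)=E^-(\alpha)$ lets the trivial rank-six bundle descend to $\RR P^5$ as $\xi\oplus(\xi\otimes\gamma)$. Here $\xi$ is the rank-three bundle of antipodally symmetric sections, $\gamma$ is the tautological line bundle, and $E^+(\alpha)$ and $E^+(-\alpha)$ are exchanged by the antipodal map. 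Writing $w(\xi)=1+x_1a+x_2a^2+x_3a^3$ in $\mathbb{Z}/2[a]/(a^6)$ and using the splitting principle for $w(\xi\otimes\gamma)$, triviality forces $w(\xi)\,w(\xi\otimes\gamma)=1$ (cf.\ \cite[Section~4]{milnor1974classes}). A direct coefficient check in degrees at most five should show that this equation has no solution. This is the standard Stiefel--Whitney obstruction to a six-dimensional linear space of nonsingular symmetric $6\times6$ matrices, and it gives the contradiction.

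\emph{Step 2: some real zero of $F$ lies outside $\set{Z}$.} This is the main obstacle. Let $\set{X}=\{F=0\}\subseteq\RR P^5$; I must exclude $\set{X}\subseteq\set{Z}$. On $\set{Z}$ one has $\tens{Q}_{\alpha}(\alpha,\cdot)=0$, so the points of $\set{Z}$ are automatically zeros of $F$. The danger is that these are the only real zeros. I would repeat the topological argument on $\RR P^5\setminus\set{Z}$, or on a small perturbation, as follows.
\begin{enumerate}
 \item First bound the dimension of $\set{Z}$. It is cut out by six quadrics $\partial_i\tens{T}(\alpha,\alpha,\cdot)$, and I would use the equality of semialgebraic dimension with the dimension of the real Zariski closure \cite[Proposition~2.8.2]{bochnak1998real}. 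If $\set{Z}$ contained a hypersurface, all six quadrics would share a linear factor $\lambda$, and I expect this to force $\tens{T}$ into a degenerate normal form that can be handled directly.
 \item Otherwise $\set{Z}$ has codimension at least two. I would then restrict to a general real line or plane $\Lambda\subseteq\RR P^5$ avoiding the bad parts of $\set{Z}$, using Bertini irreducibility \cite[Section~37.32]{stacks-project} to keep the restricted discriminant irreducible. The aim is to show that the odd-symmetry/signature obstruction of Step~1 already forces a sign change of $F$, hence a real zero, along a family that avoids $\set{Z}$.
\end{enumerate}

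Concretely, I would take a path in $S^5\setminus\set{Z}$ joining $\alpha$ to $-\alpha$; it exists because the complement of a codimension-two set is connected. The signature of $\tens{Q}$ must pass from $(p,q)$ to $(q,p)$ along this path, so $\det\tens{Q}$ vanishes on it unless $p=q=3$. For $p=q=3$ I would rerun the bundle argument on $S^5\setminus\set{Z}$, which still carries nontrivial cohomology up to the relevant degree because $\set{Z}$ has high codimension. Making this codimension and cohomology bookkeeping rigorous is where I expect most of the work to lie.
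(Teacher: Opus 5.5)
Your Step~2 carries essentially all the content of the lemma, and the mechanism you propose for it does not work; this is not a matter of bookkeeping. Consider the bad case, where every real zero of $F$ lies in $\set{Z}$. Any argument that uses only the topology of $\mathbb{RP}^5\setminus\set{Z}_{\RR}$ and the odd family $\alpha\mapsto\tens{Q}_\alpha$ of nondegenerate forms on it can at best exclude $\dim\set{Z}_{\RR}\le2$. The orientation form of your eigenbundle argument needs the complement of $\set{Z}$ in $\mathbb{S}^5$ to be simply connected. The paper's version reaches the same threshold: $[\alpha]\mapsto\vect{q}(\alpha)/\|\vect{q}(\alpha)\|$ is a local diffeomorphism $\mathbb{RP}^5\setminus\set{Z}_{\RR}\to\mathbb{S}^5$, so the nonzero class $w_2(\mathrm{T}\mathbb{RP}^5)$ must vanish on the complement. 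That only shows $\set{Z}_{\RR}$ meets every linear plane, i.e.\ $\dim\set{Z}_{\RR}\ge3$.

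For $\dim\set{Z}_{\RR}=3$ there is no topological obstruction at all. If $\set{Z}_{\RR}$ is a linear $\mathbb{RP}^3$, its complement retracts equivariantly onto a great circle. Let $\matr{R}_\theta$ rotate by $\theta/2$ in each plane $\langle\vect{e}_i,\vect{e}_{i+3}\rangle$. Then $\theta\mapsto\matr{R}_\theta\operatorname{diag}(1,1,1,-1,-1,-1)\matr{R}_\theta^{\mathsf T}$ is an odd family of nondegenerate $(3,3)$ forms on that circle, and its positive eigenbundle is nonorientable. So your claim that $\mathbb{S}^5\setminus\set{Z}$ ``still carries nontrivial cohomology up to the relevant degree'' fails precisely in the case that remains. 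Your item~(1) on hypersurfaces is also only conjectured.

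The missing idea is algebraic, and it is where conciseness enters.
\begin{enumerate}
\item The paper first disposes of nonconcise cubics, where $\det\tens{Q}_\alpha\equiv0$. It then handles reducible concise ones $f=\vect{\ell}\tens{Q}$ by taking $\vect{x}\in\ker\vect{\ell}$ with $\tens{Q}(\vect{x})\ne0$. It also notes that a real irreducible cubic is absolutely irreducible.
\item Euler's identity gives $\set{Z}_{\RR}\subseteq\operatorname{Sing}\set{X}$ for $\set{X}=V_{\mathbb C}(f)$, and $\dim\operatorname{Sing}\set{X}\le3$ for an integral hypersurface. Combined with $\dim\set{Z}_{\RR}\ge3$, this yields a three-dimensional conjugation-stable component $\set{Z}\subseteq\operatorname{Sing}\set{X}$ with Zariski-dense real points.
\item By Bertini, a general plane section of $\set{X}$ is an integral plane cubic containing $\deg\set{Z}$ singular points. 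Such a cubic has at most one singular point, so $\set{Z}$ is a real linear $\mathbb{P}^3$.
\item Then $f\in I(\set{Z})^2$, and some nonzero $\vect{v}$ in the cone over $\set{Z}$ satisfies $D_{\vect{v}}f=0$, contradicting conciseness.
\end{enumerate}
Some argument of this kind is needed to replace your Step~2.

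Separately, in Step~1 the bundle $E^+$ does not descend to $\mathbb{RP}^5$, because the antipodal map sends it to $E^-$. So $\xi$ and the splitting $\xi\oplus(\xi\otimes\gamma)$ are undefined. The conclusion of Step~1 is still correct, for instance because swapping two oriented $3$-dimensional blocks reverses orientation. But it only settles the case $\set{Z}=\emptyset$.
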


\begin{proof}
On $\set{U}^*\cong\RR^6$, write
$f(\vect{x})=\tens{T}(\vect{x},\vect{x},\vect{x})$,
$\vect{q}(\vect{x})=\nabla f(\vect{x})$, and
$\matr{H}_{\vect{x}}=\operatorname{Hess}f(\vect{x})$.
Euler's identities give
$\langle\vect{q}(\vect{x}),\vect{x}\rangle=3f(\vect{x})$ and
$\matr{H}_{\vect{x}}\vect{x}=2\vect{q}(\vect{x})$.
If $\det\matr{H}_{\vect{x}}\equiv0$, choose $\vect{x}\ne\vect{0}$ with
$\vect{q}(\vect{x})\ne\vect{0}$.
If $f=\vect{\ell}\tens{Q}$ is reducible and concise, choose
$\vect{x}\in\ker\vect{\ell}$ with $\tens{Q}(\vect{x})\ne0$.
Such a point exists, since otherwise $\tens{Q}$ is divisible by
$\vect{\ell}$ and $f$ has at most two essential variables.  Then
\[
\vect{q}(\vect{x})=\tens{Q}(\vect{x})\vect{\ell}\ne\vect{0},\ \
\matr{H}_{\vect{x}}=
\vect{\ell}\otimes\nabla\tens{Q}(\vect{x})
+\nabla\tens{Q}(\vect{x})\otimes\vect{\ell},\ \
\rank\matr{H}_{\vect{x}}\le2.
\]
The nonconcise case also has $\det\matr{H}_{\vect{x}}\equiv0$.
A real irreducible cubic is absolutely irreducible: conjugation
permutes its complex irreducible factors, and odd degree forces a
fixed factor that descends to $\RR$.  It remains to consider a
concise, absolutely irreducible cubic with
$\det\matr{H}_{\vect{x}}\not\equiv0$.

Assume that
$\det\matr{H}_{\vect{x}}=0\Rightarrow\vect{q}(\vect{x})=\vect{0}$, and set
$$\set{Z}_{\RR}
=\{[\vect{x}]\in\mathbb{RP}^5:\vect{q}(\vect{x})=\vect{0}\}
=\{[\vect{x}]\in\mathbb{RP}^5:\det\matr{H}_{\vect{x}}=0\}.$$
On $\mathbb{RP}^5\setminus\set{Z}_{\RR}$, the map
$\Phi([\vect{x}])=\vect{q}(\vect{x})/\|\vect{q}(\vect{x})\|
\in\mathbb{S}^5$ is well-defined, since $\vect{q}$ is homogeneous
of degree two.  If $D\Phi_{[\vect{x}]}[\vect{v}]=\vect{0}$, then
$\matr{H}_{\vect{x}}\vect{v}\in
\RR\vect{q}(\vect{x})=\RR\matr{H}_{\vect{x}}\vect{x}$.
Invertibility of $\matr{H}_{\vect{x}}$ gives $[\vect{v}]=0$.
Hence the tangent bundle
$\mathrm{T}(\mathbb{RP}^5\setminus\set{Z}_{\RR})$
is isomorphic to $\Phi^*\mathrm{T}\mathbb{S}^5$ and has
second Stiefel--Whitney class zero.  If $\xi$ is the generator of
$H^1(\mathbb{RP}^5;\mathbb{Z}/2)$, then
$w(\mathrm{T}\mathbb{RP}^5)=(1+\xi)^6$ and
$w_2(\mathrm{T}\mathbb{RP}^5)=\binom62\xi^2=\xi^2$.
Its restriction to a linear projective plane is nonzero, so no such
plane avoids $\set{Z}_{\RR}$.
If $\dim_{\RR}\set{Z}_{\RR}\le2$, the incidence family of planes
meeting it would have dimension at most $2+6<9$, the dimension of
$\operatorname{Gr}(3,6)$.  Thus $\dim_{\RR}\set{Z}_{\RR}\ge3$.

Let $\set{X}=V_{\mathbb{C}}(f)\subseteq\mathbb{P}^5_{\mathbb{C}}$.
The first Euler identity shows that
$\set{Z}_{\RR}\subseteq\operatorname{Sing}\set{X}$.  Since $\set{X}$ is an
integral hypersurface in characteristic zero,
$\dim_{\mathbb{C}}\operatorname{Sing}\set{X}\le3$.
Let \(\set{Y}\) be the real Zariski closure of \(\set{Z}_{\RR}\).  Because
$\operatorname{Sing}\set{X}$ is defined over $\RR$, the complexification of
$\set{Y}$ satisfies $\set{Y}_{\mathbb C}\subseteq\operatorname{Sing}\set{X}$.
Hence \cite[Proposition~2.8.2]{bochnak1998real} and the preceding lower bound
give
$3\le\dim_{\RR}\set{Z}_{\RR}=\dim\set{Y}
\le\dim_{\mathbb C}\operatorname{Sing}\set{X}\le3$.
Thus all these dimensions are three.  Since $\set{Z}_{\RR}$ is Zariski dense
in $\set{Y}$, some three-dimensional real irreducible component $\set{Y}_0$
has $\set{Z}_{\RR}\cap\set{Y}_0$ Zariski dense in $\set{Y}_0$.
Its complexification is irreducible: otherwise it would split into distinct
conjugate components and all real points would lie in their intersection,
of dimension at most two.  Denote this complexification by
$\set{Z}\subseteq\operatorname{Sing}\set{X}$.  Then $\set{Z}$ is
conjugation-stable and $\set{Z}(\RR)$ is Zariski dense.

The hyperplane linear system on $\mathbb{P}^5_{\mathbb C}$ is base-point-free.
Applying Bertini irreducibility successively to $\set{X}$
\cite[Lemma~37.32.3]{stacks-project} and Bertini smoothness successively to
$\set{Z}_{\mathrm{reg}}$
\cite[Lemma~33.47.3]{stacks-project}, while choosing the final plane
to avoid $\operatorname{Sing}\set{Z}$, a general flag of three hyperplanes
cuts $\set{X}$ in an integral plane cubic and meets $\set{Z}$ transversely
in exactly $\deg\set{Z}$ reduced smooth points.  
Here integrality follows inductively: at each step, Bertini smoothness
applied to the regular locus gives a generically reduced hyperplane
section, since a general section is not contained in the singular locus.
As a hypersurface in the corresponding linear space, each section has
no embedded components, so it is reduced and, by geometric
irreducibility, integral.
All
these points are singular on the cubic.  An integral plane cubic has at
most one singular point, since the line through two such points would be a
component by B\'ezout.  Hence \(\deg\set{Z}=1\).  An irreducible projective
variety of degree one is linear
\cite[Chapter~I, Section~7]{hartshorne1977geometry}, so \(\set{Z}\) is a
conjugation-stable \(\mathbb P^3\), hence is defined over \(\RR\).

Choose real linear coordinates with
$I(\set{Z})=(\vect{y}_1,\vect{y}_2)$.
Since $\set{Z}\subseteq\operatorname{Sing}\set{X}$, one has
$f\in I(\set{Z})^2$: expanding \(f\) by its degree in
\((\vect{y}_1,\vect{y}_2)\), the degree-zero term and the two
degree-one coefficients vanish because \(f\) and all its first
derivatives vanish on \(\set{Z}\).  Thus
$f=\vect{y}_1^2\vect{\ell}_1
+\vect{y}_1\vect{y}_2\vect{\ell}_2
+\vect{y}_2^2\vect{\ell}_3.$ 
The affine cone $\widehat{\set{Z}}\subseteq\set{U}^*$ is a
four-dimensional vector space, so it contains a nonzero
$\vect{v}\in\bigcap_{i=1}^3\ker\vect{\ell}_i$.
Then $D_{\vect{v}}f=0$.  Since
$D_{\vect{v}}f=3(\vect{v}\mathbin{\lrcorner}\tens{T})$, the vector
$\vect{v}$ lies in the kernel of the first flattening; its injectivity for a
concise tensor forces $\vect{v}=\vect{0}$, a contradiction.
Some $\vect{x}\ne\vect{0}$ therefore satisfies
$\det\matr{H}_{\vect{x}}=0$ and $\vect{q}(\vect{x})\ne\vect{0}$.
Taking $\alpha=\vect{x}$ proves the lemma.
\end{proof}

Combining the preceding lemma with polar completion and the shifted
maximum-rank estimate gives the exceptional endpoint.

\begin{corollary}[Six-variable maximum Waring rank]
\label{cor:six-variable-maximum}
One has \(\Mthree(6)\le16\).
\end{corollary}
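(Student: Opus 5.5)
The plan is to bound $s(\tens{T})$ for an arbitrary $\tens{T}\in\Sym^3(\set{U})$ with $\dim\set{U}=6$ by combining \cref{lem:singular-transverse-six} with the transverse branch of \cref{lem:polar-completion}, and then feeding in the shifted estimate of \cref{prop:shifted-maximum-rank} for five variables.

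First, reduce to the concise case with $d(\tens{T})=6$. If $\tens{T}=\tens{0}$ there is nothing to prove. If $d(\tens{T})\le5$, then \cref{lem:concise-reduction} gives $s(\tens{T})\le\Mthree(5)$. Next apply \cref{prop:shifted-maximum-rank} with $d=5$. Since $\gthree(4)=\lceil 30/6\rceil=5$, we get $\Mthree(5)\le2\cdot5+1=11\le16$. (One could also use \cref{prop:shifted-maximum-rank} directly with $d=6$, but that gives only $2\gthree(5)+1=17$, because of the exceptional value $\gthree(5)=8$; this is exactly why an extra argument is needed.)

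For $\tens{T}\ne\tens{0}$ in six variables, invoke \cref{lem:singular-transverse-six}. It supplies a nonzero $\alpha$ with $h=\rank(\alpha\mathbin{\lrcorner}\tens{T})\le5$ and $\Ess(\tens{Q}_\alpha)\not\subseteq\ker\alpha$. The lemma only assumes $\tens{T}\ne\tens{0}$, so conciseness is not even needed here. Since $\alpha^2\mathbin{\lrcorner}\tens{T}\ne\vect{0}$, the polar $\tens{Q}_\alpha$ is nonzero. The transverse estimate \eqref{eq:polar-transverse} of \cref{lem:polar-completion}, with $m=5$, then gives
\[
s(\tens{T})\le h+\Mthree(5)\le5+11=16.
\]
Taking the maximum over $\tens{T}$ yields $\Mthree(6)\le16$.

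I expect no real obstacle. All the geometric difficulty lies in \cref{lem:singular-transverse-six}, which is assumed here. The only points to check are the arithmetic $\gthree(4)=5$ from \eqref{eq:generic-cubic-rank}, and that the transverse hypothesis of \cref{lem:polar-completion} is exactly the condition $\Ess(\tens{Q}_\alpha)\not\subseteq\ker\alpha$ provided by the lemma.
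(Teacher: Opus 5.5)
Your proposal is correct and is the paper's proof: it takes $\Mthree(5)\le2\gthree(4)+1=11$ from \cref{prop:shifted-maximum-rank}, chooses $\alpha$ by \cref{lem:singular-transverse-six}, and applies the transverse branch of \cref{lem:polar-completion} to get $s(\tens{T})\le5+11=16$. Your preliminary reduction to the concise case is harmless but unnecessary, as you yourself observe.
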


\begin{proof}
By \cref{prop:shifted-maximum-rank},
\(\Mthree(5)\le2\gthree(4)+1=11\).  For
\(\tens{0}\ne\tens{T}\in\Sym^3(\RR^6)\), choose \(\alpha\) from
\cref{lem:singular-transverse-six}.  Then
\(h=\rank(\alpha\mathbin{\lrcorner}\tens{T})\le5\), and the transverse
branch of \cref{lem:polar-completion} gives
$s(\tens{T})\le h+\Mthree(5)\le5+11=16.$
\end{proof}

\begin{remark}[Comparison with general maximum-rank bounds]
The Alexander--Hirschowitz theorem gives
$\gthree(6)=\lceil\binom{8}{3}/6\rceil=10$
\cite{hirschowitz1995polynomial,brambilla2008alexander}.  Consequently,
the general real maximum-rank estimate of Blekherman and Teitler yields
$\Mthree(6)\leq2\gthree(6)=20$
\cite[Corollary~9]{blekherman2015maximum}.  Writing
$M_{3,\mathbb{C}}(6)$ for the corresponding complex maximum,
Jelisiejew's estimate specializes to
$M_{3,\mathbb{C}}(6)\leq\binom{7}{2}-\binom{3}{0}=20$
\cite[Corollary~6]{jelisiejew2014upper}, while the
projective-homogeneous-variety bound of Buczyński, Han, Mella, and
Teitler, applied to the cubic Veronese variety, gives \cite[Theorem~3.9]{buczynski2018locus} 
$$M_{3,\mathbb{C}}(6)\leq2\gthree(6)-1=19.$$  
The last two estimates concern
rank over algebraically closed fields and, in particular, the complex
bound $19$ does not furnish a real-rank bound.  Thus the estimate
$\Mthree(6)\leq16$ obtained above is sharper than the cited general
real bound and is not a consequence of these complex estimates.
\end{remark}

\subsection{Assembly of the bounds}

With the hyperplane reduction, shifted maximum-rank estimate, and
six-variable endpoint established, we now assemble the two dimension
bounds.

\begin{proof}[Proof of \cref{prop:maximum-waring-gap}]
Put $d=d(\tens{T})$.  The first inequality in
\eqref{eq:uniform-polar-bound} follows from
\cref{prop:hyperplane-gap-reduction}.  For $d\geq3$ and $d\ne7$,
\cref{prop:shifted-maximum-rank} and
\eqref{eq:generic-cubic-rank} give
$$\Mthree(d-1)-d+1
\leq2\lceil d(d-1)/6\rceil-d+2
\leq(d^2-4d+10)/3.$$
The last inequality uses the fact that $d(d-1)$ is congruent to $0$
or $2$ modulo $6$.  The cases $d=1$ and $d=2$ follow directly from
$\Mthree(0)=0$ and $\Mthree(1)=1$, respectively.  For $d=7$,
\cref{cor:six-variable-maximum} gives
$\Mthree(6)-6\le10\le31/3$.  This proves the second inequality in
\eqref{eq:uniform-polar-bound}.
\end{proof}

\begin{remark}[The seven-dimensional transition]
\label{rem:dimension-seven-transition}
The two dimension estimates have different inputs at $d=7$.  Indeed,
$\Cdsharp(7)=\min\{\Vthree(6)-6,
\lfloor3\Vthree(7)/4\rfloor\}=11$, whereas
$(7^2-4\cdot7+10)/3=31/3$.  Thus the inequality in
\cref{prop:maximum-waring-gap} at $d=7$ does not follow by replacing
$\Mthree(6)$ with $\Vthree(6)=17$.  It uses the separate estimate
$\Mthree(6)\le16$ from \cref{cor:six-variable-maximum}, which gives
$\DeltaT(\tens{T})\le16-7+1=10\le31/3$.
\end{remark}

\begin{proof}[Proof of \cref{prop:dimension-quadratic}]
If $\tens{T}=\tens{0}$, then $d=0$ and the claim is immediate.
Henceforth $d\geq1$.  \Cref{prop:maximum-waring-gap} and
\cref{prop:shifted-maximum-rank} give
$\DeltaT(\tens{T})\leq\Vthree(d-1)-d+1$, with the conventions
$\Mthree(0)=\Vthree(0)=0$ when $d=1$.
On the other hand, symmetrizing a minimum CP decomposition and applying
\eqref{eq:four-cube} gives $s(\tens{T})\leq4r(\tens{T})$.
By \cref{lem:concise-reduction,prop:shifted-maximum-rank},
$s(\tens{T})\leq\Mthree(d)\leq\Vthree(d)$.  Hence
$$\DeltaT(\tens{T})=s(\tens{T})-r(\tens{T})
\leq\frac{3s(\tens{T})}{4}\leq\frac{3\Vthree(d)}{4}.$$  
Since the gap is an integer,
taking the better
of these two estimates proves $\DeltaT(\tens{T})\le \Cdsharp(d)$.

Expanding the ceiling in \eqref{eq:generic-cubic-rank} gives the exact
branch comparison
\begin{equation*}
\Cdsharp(d)=
\begin{cases}
\Vthree(d-1)-d+1,&1\leq d\leq18,\\
\left\lfloor\dfrac{3\Vthree(d)}4\right\rfloor,&d\geq19.
\end{cases}
\end{equation*}
The two branches agree for $d=17,18$.  For $d\geq19$ one has
$\Cdsharp(d)=(d^2+d+\kappa_{d\bmod12})/4$, where
\[
\begin{array}{c|rrrrrrrrrrrr}
d\bmod12&0&1&2&3&4&5&6&7&8&9&10&11\\ \hline
\kappa&0&6&2&0&4&2&2&4&0&2&6&0.
\end{array}
\]
A direct check for $1\leq d\leq18$, and the inequality
$\kappa_{d\bmod12}\leq6$ for $d\geq19$, give
$\Cdsharp(d)\leq d^2/4+d/4+3/2$.
Since $d\leq n$, the last inequality in
\cref{prop:dimension-quadratic} follows.
\end{proof}

\begin{proof}[Proof of \cref{thm:essential-dimension-main}]
If $d(\tens{T})=0$, then $\tens{T}=\tens{0}$ and the assertion is immediate.
For $1\leq d(\tens{T})\leq18$, use
\cref{prop:maximum-waring-gap}; for $d(\tens{T})\geq19$, use
\cref{prop:dimension-quadratic}.  These cases give exactly the definition
\eqref{eq:def-C-dimension}.
\end{proof}
The preceding computation also gives
$\Cdsharp(d)=d^2/4+d/4+O(1)$ as $d\to\infty$.

\section{Conclusions}
\label{sec:discussion}

Motivated by the rank-comparison problem underlying Comon's conjecture,
we have studied the quantitative gap
$\DeltaT(\tens{T})=s(\tens{T})-r(\tens{T})$ for real symmetric
third-order tensors.  Our results provide two complementary types of
control.  
% In terms of the rank excess,
% \cref{thm:rank-excess-main} gives
% $\DeltaT(\tens{T})\leq \Ce(e(\tens{T}))$; explicitly,
% \[
% \DeltaT(\tens{T})\leq
% \begin{cases}
% 0,&0\leq e(\tens{T})\leq3,\\
% 9e(\tens{T})-3,&e(\tens{T})\geq4.
% \end{cases}
% \]
% The first branch follows from low-wedge partialization and saturated
% descent; the general wedge-support count gives the second. 
% In terms of the essential dimension,
% \cref{thm:essential-dimension-main} gives
% $\DeltaT(\tens{T})\leq \Cd(d(\tens{T}))$.
% The first branch is furnished by
% \cref{prop:maximum-waring-gap}:
% $\DeltaT(\tens{T})\leq
% \Mthree(d(\tens{T})-1)-d(\tens{T})+1$ for nonzero tensors.  This
% estimate comes from the polar-completion and adaptive-hyperplane
% lemmas. 
% The second branch follows from \cref{prop:dimension-quadratic} and gives
% $\DeltaT(\tens{T})\leq d(\tens{T})^2/4+d(\tens{T})/4+3/2$.
% The Alexander--Hirschowitz theorem determines the auxiliary generic
% complex Waring rank
% \cite{hirschowitz1995polynomial,brambilla2008alexander}, and
% \cref{prop:shifted-maximum-rank} converts it into the real estimate
% $\Mthree(d)\leq2\gthree(d-1)+1$ used here. 
In fact, these two bounds are upper estimates rather than assertions of optimal
growth.  Improving maximum real Waring-rank estimates would sharpen
the dimension-controlled estimate, while improving the first likely requires finer
control of the wedge-support and direction configurations. 
The results obtained in this paper do not resolve the following problem, 
which may be viewed as a weaker form of Comon's conjecture.

\begin{problem}
Does there exist a positive constant $c>0$ such that
$\DeltaT(\tens{T})\leq c$
for every real symmetric tensor $\tens{T}$ of order three, independently of $d(\tens{T})$ and $r(\tens{T})$?
\end{problem}

\noindent\textbf{Acknowledgments.} 
The author acknowledges the assistance of ChatGPT 5.6 Sol throughout the mathematical development and preparation of this manuscript. 
The author takes full responsibility for the content of this paper. 
This work was supported by the Guangdong Basic and Applied Basic
Research Foundation (2026A1515050001, 2026A1515010188).

\bibliographystyle{siamplain}
\bibliography{ReferenceTensor}

\end{document}